\theoremstyle{plain}
\newtheorem{theorem}{Theorem}
\newtheorem{proposition}[theorem]{Proposition}
\newtheorem{lemma}[theorem]{Lemma}
\newtheorem{corollary}[theorem]{Corollary}
\theoremstyle{definition}
\newtheorem{definition}{Definition}
\newtheorem{remark}[theorem]{Remark}
\newcommand{\eqn}[1]{\hyperref[eqn:#1]{(\ref*{eqn:#1})}}
\newcommand{\rem}[1]{\hyperref[rem:#1]{Remark~\ref*{rem:#1}}}
\newcommand{\thm}[1]{\hyperref[thm:#1]{Theorem~\ref*{thm:#1}}}
\newcommand{\cor}[1]{\hyperref[cor:#1]{Corollary~\ref*{cor:#1}}}
\newcommand{\defn}[1]{\hyperref[defn:#1]{Definition~\ref*{defn:#1}}}
\newcommand{\lem}[1]{\hyperref[lem:#1]{Lemma~\ref*{lem:#1}}}
\newcommand{\prop}[1]{\hyperref[prop:#1]{Proposition~\ref*{prop:#1}}}
\newcommand{\fig}[1]{\hyperref[fig:#1]{Figure~\ref*{fig:#1}}}
\newcommand{\tab}[1]{\hyperref[tab:#1]{Table~\ref*{tab:#1}}}
\newcommand{\algo}[1]{\hyperref[algo:#1]{Algorithm~\ref*{algo:#1}}}
\renewcommand{\sec}[1]{\hyperref[sec:#1]{Section~\ref*{sec:#1}}}
\newcommand{\secapp}[1]{\hyperref[sec:#1]{Appendix~\ref*{sec:#1}}}
\newcommand{\append}[1]{\hyperref[append:#1]{Appendix~\ref*{append:#1}}}
\newcommand{\fac}[1]{\hyperref[fac:#1]{Fact~\ref*{fac:#1}}}
\newcommand{\lin}[1]{\hyperref[lin:#1]{Line~\ref*{lin:#1}}}
\newcommand{\fnote}[1]{\hyperref[fnote:#1]{Footnote~\ref*{fnote:#1}}}
\newcommand{\prob}[1]{\hyperref[prob:#1]{Problem~\ref*{prob:#1}}}
\newcommand{\assump}[1]{\hyperref[assump:#1]{Assumption~\ref*{assump:#1}}}
\def\>{\rangle}
\def\<{\langle}
\def\trans{^{\top}}
\newcommand{\sdg}{Schr\"{o}dinger}
\newcommand{\Z}{\mathbb{Z}}
\newcommand{\R}{\mathbb{R}}
\newcommand{\C}{\mathbb{C}}
\renewcommand{\d}{\mathrm{d}}
\DeclareMathOperator{\poly}{poly}
\DeclareMathOperator{\polylog}{polylog}
\DeclareMathOperator*{\argmin}{arg\,min}
\numberwithin{equation}{section} 
\newcommand{\xx}{\boldsymbol{x}}
\newcommand{\yy}{\boldsymbol{y}}
\newcommand{\vv}{\boldsymbol{v}}
\renewcommand{\gg}{\boldsymbol{g}}
\newcommand{\kk}{\boldsymbol{k}}
\DeclareMathOperator{\conv}{\mathrm{conv}}
\begin{document}

\title{Quantum Hamiltonian Descent for Non-smooth Optimization}

\author[1,2,$\dagger$]{Jiaqi Leng\thanks{\href{mailto:jiaqil@berkeley.edu}{jiaqil@berkeley.edu}}}
\author[3,4,$\dagger$]{Yufan Zheng}
\author[5]{Zhiyuan Jia}
\author[6,7]{Lei Fan}
\author[8]{Chaoyue Zhao}
\author[9]{Yuxiang Peng}
\author[3,4]{Xiaodi Wu}

\affil[1]{Simons Institute for the Theory of Computing, University of California, Berkeley}
\affil[2]{Department of Mathematics, University of California, Berkeley}
\affil[3]{Department of Computer Science, University of Maryland, College Park}
\affil[4]{Joint Center for Quantum Information and Computer Science, University of Maryland}
\affil[5]{Department of Applied Mathematics, University of Washington}
\affil[6]{Department of Engineering Technology, University of Houston}
\affil[7]{Department of Electrical and Computer Engineering, University of Houston}
\affil[8]{Department of Industrial and Systems Engineering, University of Washington}

\affil[9]{Artephi Computing Inc.}
\affil[$\dagger$]{Equal Contribution}
\date{}
\maketitle

\begin{abstract}
Non-smooth optimization models play a fundamental role in various disciplines, including engineering, science, management, and finance. However, classical algorithms for solving such models often struggle with convergence speed, scalability, and parameter tuning, particularly in high-dimensional and non-convex settings. In this paper, we explore how quantum mechanics can be leveraged to overcome these limitations. Specifically, we investigate the theoretical properties of the Quantum Hamiltonian Descent (QHD) algorithm for non-smooth optimization in both continuous and discrete time. 
First, we propose continuous-time variants of the general QHD algorithm and establish their global convergence and convergence rate for non-smooth convex and strongly convex problems through a novel Lyapunov function design. Furthermore, we prove the finite-time global convergence of continuous-time QHD for non-smooth non-convex problems under mild conditions (i.e., locally Lipschitz). 
In addition, we propose discrete-time QHD, a fully digitized implementation of QHD via operator splitting (i.e., product formula). We find that discrete-time QHD exhibits similar convergence properties even with large time steps.
Finally, numerical experiments validate our theoretical findings and demonstrate the computational advantages of QHD over classical non-smooth non-convex optimization algorithms.
\end{abstract}

\newpage 
\tableofcontents
\newpage

\section{Introduction}
Non-smooth optimization involves the study of functions that lack differentiability at certain points or across entire regions of their domains. This lack of smoothness often arises in practical problems where robustness, sparsity, or discrete decision-making is required. Non-smooth optimization plays a vital role in various fields such as machine learning, signal processing, and finance. For example, in classification problems, techniques like support vector machines use hinge loss functions that are inherently non-smooth \cite{astorino2008non}. Similarly, robust regression techniques leverage $L_1$ norm regularization to enforce sparsity and manage outliers, which are particularly relevant in fields like finance and bioinformatics \cite{shamir2013stochastic}. Engineering and physics frequently involve variational problems where non-smooth constraints model real-world constraints effectively \cite{kovtunenko2022non}. In addition, imaging problems such as deblurring and denoising often rely on non-smooth objectives to enhance computational efficiency and accuracy \cite{chambolle2016introduction}.

We will consider the following unconstrained minimization problem:
\begin{align}\label{eqn:problem-form}
    \min_{\xx \in \R^d} f(\xx),
\end{align}
where the real-valued function $f\colon \R^d \to \R$ is assumed to be continuous but not necessarily differentiable (i.e., non-smooth). For ease of theoretical treatment, we also assume that the function $f$ is finite over $\R^d$ and has a unique global minimum $f^*$ attained at the point $\xx^*$.

Classical algorithms, such as subgradient and proximal gradient methods, have been widely adopted to address this problem. Subgradient methods \cite{boyd2003subgradient, nedic2001incremental, duchi2011adaptive} generalize gradients for non-differentiable functions and are applicable across diverse optimization scenarios. However, their convergence rates can be slow, particularly in high-dimensional or non-convex problems \cite{shamir2013stochastic}. Despite being foundational, their scalability remains limited in practice, as confirmed by broader research on efficient subgradient applications \cite{bullins2019higher}.
Proximal gradient methods \cite{li2015accelerated,sahu2021convergence} are particularly suited for composite optimization problems, where they combine gradient descent with proximal operators to handle non-smooth terms. These methods have proven effective in specific contexts, such as sparse modeling and total variation minimization \cite{stella2017proximal}. However, similar to subgradient methods, they often struggle with parameter tuning and efficiency in non-convex scenarios \cite{nesterov2005smooth}.

Beyond these, techniques like smoothing and bundle methods attempt to bridge gaps in classical approaches. Smoothing methods enhance computational tractability and convergence, particularly for problems with inherent non-smoothness \cite{kreimer1992nondifferentiable}. Bundle methods focus on large-scale optimization challenges by leveraging memory-efficient strategies \cite{haarala2004new}. Despite these advancements, inefficiencies in handling non-convexity and high-dimensionality remain prominent issues, underscoring the importance of exploring newer paradigms. 

In recent years, quantum computing has emerged as a transformative technology that reshapes the landscape of computational methods. Quantum computers have demonstrated exponential speedups for fundamental problems in scientific computing, such as solving eigenvalue problems~\cite{kitaev1995quantum} and linear systems~\cite{harrow2009quantum}, as well as simulating differential equations~\cite{berry2014high,berry2017quantum}.
Building on quantum subroutines for numerical computation, numerous quantum algorithms for optimization have been developed and shown to be particularly promising in high-dimensional regimes, where classical approaches often face challenges related to robustness and efficiency (e.g., ~\cite{rebentrost2019quantum,kerenidis2020quantum,chakrabarti2020quantum,van2020convex, augustino2023quantum,mohammadisiahroudi2022efficient}).

However, despite the extensive literature on quantum algorithms for differentiable and convex optimization problems, quantum algorithms for non-smooth problems remain relatively underexplored.~\citet{garg2021no} show that there is no quantum speedup over classical methods for non-smooth convex optimization in the worst-case scenario. In contrast, a recent work by~\citet{liu2024quantum} presents a quantum algorithm for finding a Goldstein stationary point of Lipschitz continuous functions that outperforms the best-known classical algorithms by a polynomial factor, pioneering the investigation of quantum advantage in non-smooth, non-convex optimization. Overall, the development of efficient quantum algorithms for non-smooth optimization—ones that achieve both accelerated convergence and improved solution quality—remains an open avenue for pursuing quantum advantage.

Quantum Hamiltonian Descent (QHD)~\cite{leng2023qhd} is a recently proposed quantum algorithm for continuous optimization. 
In this approach, the solution to an optimization problem is encoded as low-energy configurations of certain quantum-mechanical systems, and the search for the function minimum is implemented as an evolution of a time-dependent quantum system.
This method is inspired by a (classical) Hamiltonian formulation of the celebrated Nesterov's accelerated gradient descent algorithm~\cite{su2016differential,wibisono2016variational}, and the convergence of QHD is proven for smooth convex objective functions.
Follow-up work shows that QHD can find the global solution to a class of high-dimensional unconstrained optimization problems, each containing exponentially many local minima, in a small polynomial time~\cite{leng2023qhd}. This result underscores a significant performance advantage of QHD for solving smooth non-convex problems over widely used classical methods, such as first- and second-order approaches, which do not appear to solve these problems within polynomial time.

An interesting feature of QHD is that it solely relies on function values without requiring gradient information.
Intuitively, this feature makes QHD particularly suitable for non-smooth optimization, where the gradients may not exist. However, the convergence analysis of QHD~\cite{leng2023qhd,leng2023quantum} heavily relies on the differentiability of the objective function and cannot be trivially generalized to non-smooth optimization.
Moreover, the original QHD and its convergence analysis involve abstract time-dependent parameters, making it unclear how to choose these parameters to effectively address various non-smooth optimization problems in practice.
It remains an open question whether QHD is effective for non-smooth optimization and, if so, what potential quantum advantages it may offer over classical approaches such as subgradient methods.

\paragraph{Our results.}
In this paper, we study the performance of QHD for non-smooth optimization problems in both continuous and discrete time settings.
Our main contributions are summarized as follows:
\begin{itemize}
    \item We propose three variants of QHD, each formulated as a differential equation to address specific classes of non-smooth optimization problems with distinct convexity structures: \textit{(i)} \textbf{QHD-SC} for non-smooth strongly convex problems, \textit{(ii)} \textbf{QHD-C} for non-smooth convex problems, and \textit{(iii)} \textbf{QHD-NC} for non-smooth non-convex problems. These variants, detailed in~\sec{continuous-time-qhd-formulation}, are collectively referred to as \textbf{continuous-time QHD}. The corresponding continuous-time dynamics can be directly employed as quantum optimization algorithms by simulating their time evolution on a quantum computer (see~\algo{cont-time-qhd}). We analyze the query and gate complexity of these algorithms in~\cor{continuous-time-qhd-complexity}.
    \item We establish the global convergence of continuous-time QHD using energy reduction arguments tailored to each problem class. The convergence rates are proven for strongly convex (\thm{qhd-strongly-convex}) and generally convex (\thm{qhd-convex}) optimization problems. We also prove that \textbf{QHD-NC} converges to the global minimum of any (possibly non-smooth and non-convex) locally Lipschitz objective function in finite time (\thm{qhd-non-convex}), which, to the best of our knowledge, is the \textit{first rigorous} result in the literature on quantum optimization achieving such a guarantee. In contrast, classical methods like the subgradient method can fail to converge under the locally Lipschitz condition~\cite{daniilidis2020pathological,rios2022examples}, and no convergence result is known for the stochastic subgradient method assuming the locally Lipschitz condition only\textemdash let alone convergence to the global minimum.\footnote{Exceptions exist when there are more assumptions on objective functions (e.g., being semialgebraic or definable in an o-minimal structure), under which the stochastic subgradient method converges to first-order stationary points~\cite{davis2020stochastic}.}
    \item Inspired by the operator splitting (or product formula) technique in quantum simulation, we develop three \textbf{discrete-time QHD} algorithms (see~\algo{discrete-time-qhd}), each corresponding to a variant of continuous-time QHD. The query and gate complexity of these discrete-time algorithms are analyzed in~\prop{discrete-time-qhd-complexity}. Similar to classical gradient descent, these algorithms iteratively approach the solution using constant step sizes (i.e., the length of time steps). Interestingly, numerical studies show that they continue to exhibit convergence behavior even when the step size is so large that the discrete-time iterations no longer follow the continuous-time dynamics.
    \item Additionally, we perform numerical experiments to evaluate the performance of QHD in non-smooth non-convex optimization. We observe that QHD outperforms the subgradient method and its stochastic variants in 10 out of 12 test problems. In some cases, QHD achieves an optimality gap that is \textit{orders of magnitude} lower than those of the classical methods. Our findings make QHD a promising approach to addressing non-smooth optimization instances in both theoretical and practical applications.
\end{itemize}

\begin{table}[htbp]
    \centering
    \resizebox{\columnwidth}{!}{
        \begin{tabular}{lccc}
            \toprule
            & Continuous-Time QHD & Discrete-Time QHD & (Stochastic) Subgradient Method \\
            \midrule
            $\mu$-Strongly Convex               & $O(e^{-\sqrt{\mu}t})$ (\thm{qhd-strongly-convex}) & $O(t^{-1})$ (\sec{comparison-subgrad}) & $\Theta(t^{-1} \log t)$~\cite{bubeck2015convex,hazan2007logarithmic,rakhlin2012making} \\
            Convex                        & $O(t^{-2})$ (\thm{qhd-convex}) & $O(t^{-2/3})$ (\sec{comparison-subgrad}) & $\Theta(t^{-1/2})$~\cite{beck2017first,nemirovski2009robust,nemirovski1983problem} \\
            Locally Lipschitz             & Global Minimum (\thm{qhd-non-convex}) & Global Minimum (\thm{qhd-non-convex})\footnotemark & Convergence Not Guaranteed~\cite{daniilidis2020pathological,rios2022examples} \\
            \bottomrule
        \end{tabular}
    }
    \caption{Comparison of (worst-case) convergence rate bounds and convergence guarantees for continuous-time QHD, discrete-time QHD, and the subgradient method across various non-smooth function classes. Here, \(t\) denotes time or the iteration number. Discrete-time QHD bounds are obtained via numerical experiments.}
    \label{tab:comparison_methods}
\end{table}

\footnotetext{Given sufficiently small step size, discrete-time QHD follows the same convergence trajectory as continuous-time QHD.} 

In~\tab{comparison_methods}, we present the convergence results for both continuous-time and discrete-time QHD across various classes of non-smooth functions. For comparison, we also include the convergence rates of the subgradient method in both deterministic and stochastic settings. Continuous-time QHD achieves a faster convergence rate (or guarantees global convergence) compared to the subgradient method in all three categories. While discrete-time QHD does not provide a speedup over classical methods for strongly convex functions, it demonstrates advantages for generally convex and non-convex optimization problems.

The discrepancy in convergence rates between continuous- and discrete-time QHD arises from the fact that discrete-time QHD algorithms eventually reach a nonzero sub-optimality gap that depends on the step size. Before this gap saturates, the discrete-time algorithms exhibit the same convergence rate as their continuous-time counterparts. This newly observed behavior, which does not occur in smooth optimization problems, allows us to postulate the iteration complexity of discrete-time QHD. A rigorous justification of this complexity is beyond the scope of this work and is left for future research.

\paragraph{Technical summary.}
In this work, we analyze the computational complexity of both continuous- and discrete-time QHD, assuming a standard gate-model quantum computer. The complexity analysis of continuous-time QHD relies on a strengthened version (\prop{spectral-method}) of the time-dependent Hamiltonian simulation algorithm in~\cite{childs2022quantum}. For discrete-time QHD, the quantum implementation is enabled by efficiently diagonalizing the kinetic operator via the Quantum Fourier Transform, as detailed in~\prop{discrete-time-qhd-complexity}.
We also provide a convergence analysis of continuous-time QHD for various classes of non-smooth optimization problems. A brief summary follows below.
\begin{itemize}
    \item Non-smooth convex optimization: In the seminal work~\cite{leng2023qhd}, the convergence results are obtained by constructing a specific Lyapunov function. Mathematically, a Lyapunov function is a time-dependent function associated with a dynamical system that is non-increasing in time. Lyapunov analysis is a standard tool in the study of long-term behaviors of differential equations. By showing the Lyapunov function is non-increasing in time, \cite{leng2023qhd} proves the expectation value of the objective converges to the global minimum $f(x^*)$ at a rate $\mathcal{O}(e^{-\beta_t})$, where $\beta_t$ is a time-dependent function in the quantum Hamiltonian. A detailed overview of QHD is provided in~\sec{qhd-review}. 
    However, the choice of $\beta_t$ (and other time-dependent parameters) is not clear in our setting. Meanwhile, to show the monotonicity of the Lyapunov function, \cite{leng2023qhd} requires a well-defined derivative of the objective $f$, which does not directly apply to a non-smooth objective $f$. 
    To circumvent the technical issue, we
    \textit{(i)} introduce two Lyapunov functions, each tailored to \textbf{QHD-C} and \textbf{QHD-SC}, respectively, and then \textit{(ii)} employ a weaker notion of derivatives to rewrite the operator commutator in the Lyapunov function via the integral by parts formula (see~\lem{weak-der-c}).
    It is worth noting that our new argument requires a $C^\infty$ regularity estimate of the solution to the Schr\"odinger equation, as discussed in~\append{regularity}.
    
    \item Non-smooth non-convex optimization:~\cite{leng2023qhd} provides a heuristic argument to show the global convergence of QHD based on a quantum adiabatic approximation for smooth non-convex optimization problems. This heuristic argument heavily relies on the fact that the function $f$ is twice differentiable at the global minimizer $x^*$, which may not hold in non-smooth optimization. 
    We rigorously establish the asymptotic convergence of \textbf{QHD-NC} to the global minimum of $f(x)$ by exploiting a correspondence (often called an \emph{energy argument} in physics) between the expected function value $\mathbb{E}[f(X_t)]$ and the energy spectrum of the quantum Hamiltonian $H_{\mathrm{NC}}(t)$. We prove the global convergence of \textbf{QHD-NC} by analyzing the spectral properties of the quantum Hamiltonian via Weyl's law~\cite{ivrii2016100,rozenblum1976distribution}, which enables us to get rid of the smoothness assumption at the global minimizer.
\end{itemize}

\paragraph{Organization.}
This paper is organized as follows. \sec{related-work} reviews existing quantum optimization algorithms. \sec{qhd-review} briefly reviews the original QHD algorithm, providing the necessary theoretical background for the rest of the work. 
\sec{convergence-analysis} presents the three variants of continuous-time QHD for various classes of optimization problems and provides the corresponding convergence analysis.
\sec{discrete-time-qhd} introduces discrete-time QHD and investigates its convergence behaviors for non-smooth optimization problems.
\sec{num} presents a numerical evaluation of QHD for non-smooth non-convex optimization problems and demonstrates its advantage over classical algorithms.

\paragraph{Acknowledgment.} JL is partially supported by the Simons Quantum Postdoctoral Fellowship, DOE QSA 
grant \#FP00010905, and a Simons Investigator award through Grant No. 825053. XW and YZ are partially supported by NSF CAREER Award CCF-1942837, a Sloan research fellowship, and the U.S. Department of Energy, Office of Science, Accelerated Research in Quantum Computing, Fundamental Algorithmic Research toward Quantum Utility (FAR-Qu).

\section{Related work}\label{sec:related-work}
\paragraph{Quantum algorithms for discrete optimization.}
The first applications of quantum algorithms have predominantly focused on discrete optimization, as most of these algorithms can be implemented on currently available quantum architectures. Notable examples include Grover's Search~\cite{grover1996fast,durr1996quantum,bulger2003implementing,baritompa2005grover},  Variational Quantum Eigensolver~\cite{tilly2022variational,peruzzo2014variational,mcclean2016theory,cerezo2021variational}, Quantum Adiabatic Algorithm~\cite{mcgeoch2014adiabatic,kadowaki1998quantum,farhi2000quantum,ronnow2014defining} and Quantum Approximate Optimization Algorithm~\cite{blekos2024review,zhou2020quantum,farhi2014quantum}. More recent breakthroughs, such as the Short-Path Algorithm~\cite{hastings2018short}, which outperforms Grover's Search with a theoretical guarantee, and Decoded Quantum Interferometry~\cite{jordan2024optimization}, which reduces optimization problems to \emph{classical} decoding tasks, have opened new avenues for exploring quantum optimization.

\paragraph{Quantum algorithms for continuous optimization.}
In recent decades, significant progress has been made in developing quantum algorithms for continuous optimization. 
Quantum Gradient Descent methods implement gradient descent in $n$-dimensional space using only $\polylog(n)$ qubits and they are efficient if the number of iterations is small~\cite{rebentrost2019quantum} or the gradient is an affine function~\cite{kerenidis2020quantum}.
A quadratic speedup (in the instance dimension $n$) has been achieved for general convex optimization~\cite{chakrabarti2020quantum,van2020convex}. For more structured problems, such as linear programming and semidefinite programming, faster quantum algorithms have been devised using Multiplicative Weights Update (MWU) framework~\cite{brandao2017quantum,brandao2017quantum2,van2019improvements,van2019quantum} or interior-point methods~\cite{kerenidis2020quantum2,mohammadisiahroudi2022efficient,augustino2023quantum,wu2023inexact,apers2023quantum}.  
In contrast, much less is known about \emph{provable} quantum speedups for non-convex optimization. A mild quantum speedup has been demonstrated for finding critical points in stochastic optimization~\cite{sidford2024quantum}, while a cubic speedup (in the instance dimension $n$) has been achieved for escaping saddle points~\cite{zhang2021quantum}. 

Most of the algorithms discussed above share a common feature: they leverage quantum acceleration in intermediate steps of classical optimization methods.  
Recently, a new research direction has emerged, focusing on directly harnessing quantum dynamics in carefully designed systems for optimization. Notable examples include QHD  and Quantum Langevin Dynamics (QLD)~\cite{chen2023quantum}. These methods provably converge to global minima on smooth objectives, albeit not always efficiently, and have demonstrated strong empirical performance in tackling non-convex optimization problems.

For a comprehensive overview of quantum optimization, we refer readers to the survey by Abbas et al.~\cite{abbas2024challenges}.

\section{An Overview of Quantum Hamiltonian Descent}\label{sec:qhd-review}

Quantum Hamiltonian Descent (QHD) is a quantum optimization algorithm motivated by the interplay between classical gradient-based optimization algorithms and differential equations. 
The formulation of QHD is closely related to the continuous-time limit of Nesterov's accelerated gradient descent (NAG) method. 
In this section, we provide an overview of the QHD algorithm, including its origin from classical accelerated gradient descent methods, continuous-time formulation, and discrete-time implementation. 

\subsection{Hamiltonian formulation of accelerated gradient descent}

\paragraph{NAG and differential equations.}
There is a well-established historical connection between ordinary differential equations (ODEs) and optimization. For example, the standard gradient descent (GD) can be regarded as the forward Euler discretization of the gradient flow equation:
\begin{align}
    \dot{X}_t = - \nabla f(X_t), \quad X_t = x_0.
\end{align}
where $x_0$ is the initial guess. 
Accelerated gradient descent algorithms are first-order methods that achieve a convergence rate faster than the standard GD.
Nesterov~\cite{nesterov1983method} proposed the first accelerated gradient descent algorithm (known as Nesterov's accelerated gradient descent, or NAG).
The update rules of NAG are given as follows:
\begin{subequations}
    \begin{align}
        x_k &= y_{k-1} - s\nabla f(y_{k-1}),\\
        y_k &= x_k + \frac{k-1}{k+2} (x_k - x_{k-1}),
    \end{align}
\end{subequations}
where $x_0$ is an initial guess, $y_0 = 0$, and $s > 0$ is a fixed step size. 
For decades, the mechanism of acceleration in NAG appeared mysterious, and an extensive body of research works has been dedicated to a more intuitive understanding of accelerated gradient descent algorithms.
In a seminal work~\cite{su2016differential}, the authors show that the continuous-time limit of NAG can be described as a simple second-order ordinary differential equation (ODE):
\begin{align}\label{eqn:nesterov-ode}
    \ddot{X}_t + \frac{3}{t}\dot{X}_t + \nabla f(X_t) = 0, \quad X_t = x_0.
\end{align}
This continuous-time dynamical system perspective of NAG has revolutionized the understanding and design of accelerated gradient descent through systematically introducing of techniques from continuous mathematics, in particular, the theory of differential equations. 

The ODE perspective of NAG was later extended to a broader framework that characterizes accelerated gradient descent by Hamiltonian dynamics~\cite{wibisono2016variational}. Specifically, the Hamiltonian function that models NAG is the following,
\begin{align}
    H(X, P, t) = \frac{1}{2t^3}\|P\|^2 + t^3 f(X),
\end{align}
where $X$ and $P$ represent the position and momentum variables of the system, respectively.
The Hamiltonian flow generated by the function $H(X_t,P_t,t)$ is described by the following system of ODEs:
\begin{subequations}\label{eqn:nesterov-ham}
    \begin{align}
     \dot{X}_t &= \frac{\partial H}{\partial P_t} = \frac{1}{t^3}P_t,\label{eqn:nesterov-ham-a}\\
    \dot{P}_t &= - \frac{\partial H}{\partial X_t} = - t^3 \nabla f(X_t),\label{eqn:nesterov-ham-b}
    \end{align}
\end{subequations}
with initial data $X_0 = x_0$ and $P_0 = 0$. It can be readily verified that the Hamiltonian flow is equivalent to the ODE model~\eqn{nesterov-ode} by plugging~\eqn{nesterov-ham-b} into~\eqn{nesterov-ham-a}.

\paragraph{The Bregman-Lagrangian framework.}
Motivated by the connection between NAG and Hamiltonian flows, \citet{wibisono2016variational} proposed a more general framework to model accelerated gradient descent using Hamiltonian/Lagrangian mechanics. This framework is based on the so-called Bregman Lagrangian:
\begin{align}
    \mathcal{L}(X,\dot{X},t) = e^{\alpha_t + \gamma_t}\left(\frac{1}{2}|e^{-\alpha_t }\dot{X}|^2  - e^{\beta_t}f(X)\right),
\end{align}
where $f(x)\colon \R^d \to \R$ is the objective function, $t \ge 0$ represents the time variable, $X \in \R^d$ is the position, $\dot{X} \in \R^d$ is the velocity, and $\alpha_t, \beta_t, \gamma_t$ are arbitrary smooth functions that control the damping of energy in the system. The Euler-Lagrange equation associated with the Bregman Lagrangian reads:
\begin{align}\label{eqn:second-order-ode}
    \ddot{X}_t + (\dot{\gamma}_t - \dot{\alpha}_t) \dot{X}_t + e^{2\alpha_t+\beta_t} \nabla f(X_t)=0,
\end{align}
which is a second-order differential equation. In particular, the ODE model of NAG~\eqn{nesterov-ode} is a special case of the Euler-Lagrange equation~\eqn{second-order-ode} by choosing $\alpha_t = -\log(t)$ and $\beta_t = \gamma_t = 2\log(t)$.
Equivalently, we can write down the Hamiltonian function corresponding to the Bregman Lagrangian via the Legendre transformation~\cite[Section 3]{wibisono2016variational},
\begin{align}\label{eqn:bregman-hamiltonian}
    \mathcal{H}(X,P,t) = e^{\alpha_t+\gamma_t}\left(\frac{1}{2}|e^{-\gamma_t }P|^2 + e^{\beta_t}f(X)\right),
\end{align}
where $P$ and $X$ represent the momentum and position variables, respectively.
The Euler-Lagrange equation~\eqn{second-order-ode} can be directly recovered from the Hamilton's equations associated with the Hamiltonian~\eqn{bregman-hamiltonian}.

The convergence of the Euler-Lagrange equation can be established via an energy reduction argument. First, we consider the following energy function 
\begin{align}
    \mathcal{E}(t) = \|e^{-\alpha_t} \dot{X}_t + X_t - x^*\|^2 + e^{\beta_t}\left(f(X_t) - f(x^*)\right).
\end{align}
Under the so-called \textit{ideal scaling conditions}, i.e., 
\begin{align}\label{eqn:ideal-scaling}
    \dot{\beta}_t \le e^{\alpha_t},\quad \dot{\gamma}_t = e^{\alpha_t},
\end{align}
we can prove that the energy function $\mathcal{E}(t)$ is non-increasing in time:
\begin{align}
    \mathcal{E}(t) \le \mathcal{E}(0)\quad \forall t \ge 0.
\end{align}
In other words, $\mathcal{E}(t)$ is a \textit{Lyapunov function} associated with the dynamical system. As a result, we obtain the convergence rate of the Euler-Lagrange equation~\cite[Theorem 1.1]{wibisono2016variational}:
\begin{align}
    f(X_t) - f(x^*) \le \mathcal{O}(e^{-\beta_t}).
\end{align}

\subsection{Quantum dynamics for optimization}
\paragraph{Quantization of accelerated gradient descent.}
Hamiltonian evolution serves as a bridge between two fundamental realms of physics: classical and quantum mechanics.
Through this connection, the classical Hamiltonian formulation of accelerated gradient descent has inspired \textbf{Quantum Hamiltonian Descent (QHD)}~\cite{leng2023qhd}, which is expressed as a quantum Hamiltonian evolution governed by the following partial differential equation (PDE),
\begin{align}\label{eqn:qhd-review}
    i \partial_t \Psi(t,x) = \hat{H}(t) \Psi(t,x),\quad \hat{H}(t) = e^{\alpha_t - \gamma_t}\left(-\frac{1}{2}\Delta\right) + e^{\alpha_t+\beta_t+\gamma_t} f(x),
\end{align}
Here, $i = \sqrt{-1}$ is the imaginary unit, and $\Delta = \sum^d_{j=1}\partial^2_{x_j}$ is the Laplacian operator.
The operator $\hat{H}(t)$ is called the quantum Hamiltonian, which represents the total energy of the system. The quantum Hamiltonian $\hat{H}(t)$ in QHD can be regarded as the direct quantization of the Bregman Hamiltonian~\eqn{bregman-hamiltonian}, where we invoke the following change of variables (a procedure known as \textit{canonical quantization}~\cite{takhtadzhian2008quantum}):
\begin{align}
    P_j \mapsto \hat{p}_j = -i \partial_{x_j}, \quad X_j \mapsto \hat{x}_j, \quad \forall j \in [d].
\end{align}
Note that $\hat{p}_j$ and $\hat{x}_j$ should be interpreted as operators rather than  scaler functions. In particular, given a smooth test function $\varphi \colon \R^d \to \R$, we have
\begin{align}
    (\hat{p}_j \varphi)(x) \coloneqq -i \partial_{x_j} \varphi(x),\quad (\hat{x}_j \varphi)(x) \coloneqq x_j \varphi(x).
\end{align}

The function $\Psi(t,\xx) \colon [0, \infty)\times \R^d \to \C$ is the quantum wave function that describes the state of the quantum system at time $t\ge 0$. 
It is known that the evolutionary PDE~\eqn{qhd-review} preserves the $L^2$-norm of the wave function. In other words, given an initial state $\Psi(0,\xx) = \Psi_0(\xx)$ with a unit $L^2$-norm, i.e., $\|\Psi_0\| = 1$, the wave function always has a unit $L^2$-norm during the time evolution:
\begin{align}
    \int_{\R^d} |\Psi(t,\xx)|^2~\d \xx = \int_{\R^d} |\Psi_0(\xx)|^2~\d \xx =  1, \quad \forall t \ge 0.
\end{align}
Therefore, the modulus square of the wave function $|\Psi(t,\xx)|^2$ can be interpreted as the probability density of the quantum particle in the real space $\R^d$ at a certain time $t$.

Similar to the Bregman-Lagrangian framework, the asymptotic convergence of the quantum PDE can be shown under the ideal scaling conditions~\eqn{ideal-scaling}. We consider the following quantum energy function:
\begin{align}
    \mathcal{W}(t) \coloneqq \int_{\R^d} \overline{\Psi(t,\xx)} \hat{O}(t) \Psi(t,\xx)~\d \xx,\quad \hat{O}(t) \coloneqq \frac{1}{2}\sum^d_{j=1}\|e^{-\gamma_t}\hat{p}_j + \hat{x}_j\|^2 + e^{\beta_t} f(x).
\end{align}
For a smooth convex objective function $f$ such that $f(\xx^*) = 0$ and $\xx^* = 0$, we can prove that $\mathcal{W}(t)$ is non-increasing in time. As a consequence, we obtain the convergence rate of QHD~\cite[Theorem 1]{leng2023qhd}:
\begin{equation}
    \mathbb{E}[f(X_t)] - f(\xx^*) \le \mathcal{O}(e^{-\beta_t}),
\end{equation}
where $X_t$ is a random variable distributed following the probability density function $|\Psi(t,\xx)|^2$.

\paragraph{QHD as quantum optimization algorithms.}
The quantum dynamics~\eqn{qhd-review} can be exploited to solve optimization problems by simulating the time evolution using a quantum computer.
Simulating quantum Hamiltonian evolution is a fundamental task in quantum computing. It can be shown that, under reasonable assumptions, the QHD dynamics can be efficiently simulated on gate-based quantum computers with $\tilde{\mathcal{O}}(T)$ quantum queries to the function $f$ and $\tilde{\mathcal{O}}(dT)$ total gate complexity~\cite{childs2022quantum,leng2023quantum}, where $d$ is the problem dimension and $T$ is the total evolution time.
Alternatively, it is also possible to simulate the QHD dynamics using an analog quantum Ising Hamiltonian simulator via the \textit{Hamiltonian embedding} technique~\cite{leng2024expanding}. This approach leads to resource-efficient implementations of QHD to solve large-scale non-linear and non-convex optimization problems on near-term quantum devices~\cite{leng2023qhd,kushnir2025qhdopt}.

In this paper, we focus on the implementation of QHD using gate-based quantum computers. In~\algo{cont-time-qhd}, we formalize the quantum algorithm based on simulating the continuous-time QHD dynamics. A rigorous complexity analysis is provided in~\cor{continuous-time-qhd-complexity}.

\section{Continuous-time QHD for Non-Smooth Optimization}\label{sec:convergence-analysis}

\subsection{Quantum algorithms and complexity analysis}\label{sec:continuous-time-qhd-formulation}

Motivated by the classical study of accelerated gradient descent~\cite{shi2022understanding}, we propose three variants of QHD, each corresponding to a specific class of optimization problems. In each class, the optimization problems are characterized by the convexity of the objective function.

\begin{definition}[Convexity]
    A function $f\colon \R^d \to \R$ is \textit{convex} if for all $0 \le t \le 1$ and any $\xx, \yy \in \R^d$, we have
\begin{equation}
    f(t\xx + (1-t)\yy) \le tf(\xx) + (1-t)f(\yy).
\end{equation}
\end{definition}

\begin{definition}[Subgradient]
    Let $f\colon \R^d \to \R$. We say a vector $\gg\in \R^d$ is a \textit{subgradient} of $f$ at $\xx$ if,
\begin{equation}
    f(\yy) \ge f(\xx) + \gg\trans (\yy-\xx), \quad \forall \yy \in \R^d.
\end{equation}
\end{definition}

The subgradient of a function $f$ at a fixed point is not unique. The \textit{subdifferential} of $f$ at $\xx \in \R^d$ is defined as the set of all subgradients of $f$ at $\xx$, denoted by
\begin{equation}
    \partial f(\xx) \coloneqq \{\gg\in\R^d\colon f(\yy) \ge f(\xx) + \gg\trans (\yy-\xx), \quad \forall \yy \in \R^d\}.
\end{equation}

\begin{definition}[Strong convexity]
    Let $\mu > 0$.
    A function $f$ is a \textit{$\mu$-strongly convex} if for any $\xx, \yy \in \R^d$ and any $\gg \in \partial f(\xx)$, we have
    \begin{equation}
        f(\yy) \ge f(\xx) + \gg^\top (\yy - \xx) + \frac{\mu}{2}\|\yy - \xx\|^2.
    \end{equation}
\end{definition}

Now, we list the variants of QHD for non-smooth optimization problems with different convexity structures:
\begin{enumerate}
    \item[(a)] QHD for strongly convex optimization problems (\textbf{QHD-SC}):
    \begin{equation}\label{eqn:qhd-sc}
        i \partial_t \Psi(t) = H_{\mathrm{SC}}(t)\Psi(t),\quad
        H_{\mathrm{SC}}(t) = e^{-2\sqrt{\mu}t} \left(-\frac{1}{2} \Delta \right) + e^{2\sqrt{\mu}t} f(x),\quad t \in [0, \infty),
    \end{equation}
    where $f$ is a $\mu$-strongly convex function. 
    
    \item[(b)] QHD for convex optimization problems (\textbf{QHD-C}):
    \begin{equation}\label{eqn:qhd-c}
        i \partial_t \Psi(t) = H_{\mathrm{C}}(t)\Psi(t),\quad H_{\mathrm{C}}(t) = \frac{1}{t^3} \left(-\frac{1}{2} \Delta \right) + t^3 f(x),\quad t \in [T_0, \infty),
    \end{equation}
    where $f$ is a convex function, $T_0 > 0$ is the starting time.\footnote{We require $T_0 > 0$ to ensure the quantum Hamiltonian $H_{\mathrm{C}}(t)$ is well-defined for all $t$.} 

    \item[(c)] QHD for non-convex optimization problems (\textbf{QHD-NC}):
    \begin{equation}\label{eqn:qhd-nc}
        i \partial_t \Psi(t) = H_{\mathrm{NC}}(t)\Psi(t),\quad
        H_{\mathrm{NC}}(t) = \frac{1}{\alpha t^{1/3}} \left(-\frac{1}{2}\Delta \right) + \alpha t^{1/3} f(x),\quad t \in [T_0, \infty),
    \end{equation}
    where $\alpha > 0$ is a parameter that controls the rate of the evolution. The choice of $\alpha$, which depends on $f$ and the target optimality gap, will be detailed in the proof of~\thm{qhd-non-convex}.
\end{enumerate}
Note that in all three cases, the QHD Hamiltonian takes the following form:
\begin{align}\label{eqn:cont-time-qhd-general}
    H(t) = \frac{1}{\lambda(t)} \left(-\frac{1}{2}\Delta \right) +\lambda(t) f(x),
\end{align}
where $\lambda(t)$ is a monotonically increasing function in time $t$. In~\algo{cont-time-qhd}, we give the quantum algorithm that implements continuous-time QHD to solve the optimization problem~\eqn{problem-form}.

We assume our quantum algorithm only has access to the \textit{quantum evaluation oracle} (i.e., zeroth-order oracle), which is defined as a unitary map $O_f$ on $\R^d\otimes \R$ such that for any $\ket{\xx} \in \R^d$,
\begin{align}
    O_f\left(\ket{\xx}\otimes\ket{0}\right) = \ket{\xx}\otimes\ket{f(\xx)}.
\end{align}
Note that the quantum evaluation oracle can be coherently accessed.

\begin{algorithm}[H]
\begin{small}
\caption{Continuous-time QHD}
\label{algo:cont-time-qhd}
\vspace{5pt}
\textbf{Classical inputs:} time-dependent parameter $\lambda(t)$ (\textbf{QHD-SC}: $\lambda(t) = e^{2\sqrt{\mu}t}$; \textbf{QHD-C}: $\lambda(t) = t^3$; \textbf{QHD-NC}: $\lambda(t) = \alpha t^{1/3}$), starting time $T_0$, stopping time $T_f$\\
\textbf{Quantum inputs:} zeroth-order oracle $O_f$, an initial guess state $\ket{\Psi_0}$ \\
\textbf{Output:} an approximate solution to the minimization problem~\eqn{problem-form}
\vspace{7pt}
\hrule
\vspace{5pt}
\begin{algorithmic}

\State 1. Initialize the quantum computer to the quantum state $\ket{\Psi_0(\xx)}$
\State 2. Simulate the quantum dynamics described by~\eqn{cont-time-qhd-general} for time $[T_0, T_f]$ using~\cor{continuous-time-qhd-complexity}.
\State 3. Measure the final quantum state $\ket{\Psi(T)}$ using the computational basis.
\end{algorithmic}
\vspace{5pt}
\end{small}
\end{algorithm}

In the last step, the computational basis measurement corresponds to measuring the wave function $\Psi(T,\xx)$ using the position quadrature observable $\hat{\xx}$. 
The outcome is a random variable $X_T$ distributed as the probability density $|\Psi(T,\xx)|^2$.

\paragraph{Complexity analysis.}
The complexity of~\algo{cont-time-qhd} is mainly determined by the cost of simulating the Schr\"odinger equation governed by the time-dependent Hamiltonian operator~\eqn{cont-time-qhd-general}. Previous results (e.g.,~\cite{leng2023quantum}) focus on the quantum simulation with a smooth potential function, which do not apply to our non-smooth setting. 
To address this issue, we prove the following theorem for simulating Schr\"odinger equation with non-smooth (but bounded) potential functions, as detailed in~\prop{spectral-method}. The proof of the result can be found in~\append{q-sim-nonsmooth-f}, which relies on a new Sobolev estimate that characterizes the growth of derivatives of the wave function $\Psi(t,x)$ in continuous-time QHD.

\begin{proposition}\label{prop:spectral-method}
    Consider the time-dependent Schr\"odinger equation:
    \begin{align}\label{eqn:schrodinger-standard-form}
        i \frac{\partial}{\partial s}\ket{\Psi(s,x)} = \left[-\frac{1}{2}\nabla^2 + \varphi(s)f(x)\right]\ket{\Psi(s,x)},
    \end{align}
    where the function $\Psi(s,x) \colon [0, s_f]\times \Omega \to \C$ is subject to a period and analytic initial condition $\Psi_0(x)$ over the box $\Omega = [-M, M]^d$ for some $M > 0$ with periodic boundary condition.
    Suppose the potential function $f(x)$ is bounded in $\Omega$. We define 
    \begin{equation}\label{eqn:xi-and-S}
        \xi = \int^{s_f}_0 \varphi(s)\d s,\quad S = \sup_{x\in \Omega} |f(x)|.
    \end{equation}
    Then, the Schr\"odinger equation \eqn{schrodinger-standard-form} can be simulated for time $s\in [0, s_f]$ up to accuracy $\eta$ with $\widetilde{\mathcal{O}}\left(\xi S\right)$
    queries to the quantum evaluation oracle $O_V$, and an additional gate complexity 
    \begin{equation}
        \widetilde{\mathcal{O}}\left(\xi S(d + \poly(\xi S))\right),
    \end{equation}
    where the $\widetilde{\mathcal{O}}(\cdot)$ notation suppresses poly-logarithmic factors in $\eta$.
\end{proposition}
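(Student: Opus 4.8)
The plan is to adapt the Fourier pseudo-spectral simulation strategy of~\cite{childs2022quantum} to the non-smooth regime; the only genuinely new ingredient is a replacement for the regularity input that, in that work, is available only for smooth potentials. There are three error sources to control and then combine by the triangle inequality: \textit{(i)} truncating the Fourier series of $\Psi(s,\cdot)$ on $\Omega$ to $G$ modes per coordinate; \textit{(ii)} the pseudo-spectral (aliasing) error incurred by replacing multiplication by $f(x)$ with its values on the real-space grid; and \textit{(iii)} the Hamiltonian-simulation error of the resulting finite-dimensional, time-dependent evolution. Because $\Omega$ is already a periodic box, there is no domain-truncation error to account for, which is why the stated cost carries no dependence on the size of $\Omega$.

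First I would establish a decay estimate for the Fourier spectrum of $\Psi(s,\cdot)$ that is uniform over $s\in[0,s_f]$. Passing to the interaction picture with respect to the time-independent kinetic operator $K=-\tfrac12\nabla^2$—so that $\widetilde\Psi(s)=e^{isK}\Psi(s)$ solves $i\partial_s\widetilde\Psi=\widetilde F(s)\widetilde\Psi$ with $\widetilde F(s)=\varphi(s)\,e^{isK}f(x)e^{-isK}$ and $\|\widetilde F(s)\|\le\varphi(s)S$—a Duhamel/Gr\"onwall argument shows the total mass that can be transported out of any fixed band of modes is controlled by $\xi S$. Since the initial datum $\Psi_0$ is analytic and periodic, its Fourier coefficients decay exponentially, and a bootstrap on the Sobolev norms $\|\Psi(s)\|_{H^m}$—tracking how the bounded perturbation $\varphi(s)f(x)$ couples low modes into high ones—shows that keeping $G=\poly(\xi S)\cdot\polylog(1/\eta)$ modes per coordinate makes \textit{(i)} at most $\eta/3$; this is where the $\poly(\xi S)$ overhead in the gate count originates. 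The aliasing error \textit{(ii)} is then bounded by the same Sobolev estimate together with the standard trigonometric-interpolation bound, using $S=\sup_\Omega|f|$.

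With the discretization fixed, I would treat \textit{(iii)} directly. The discretized Hamiltonian is $H_G(s)=K_G+\varphi(s)F_G$, where $K_G$ is diagonal in the momentum basis (the phases $\tfrac12|\pi k/M|^2$) and $F_G=\diag(f(x_j))$ is diagonal in the real-space grid basis, the two bases related by $d$ parallel Quantum Fourier Transforms. Passing to the interaction picture with respect to $K_G$ removes the large kinetic norm from the simulation cost, leaving an effective Hamiltonian of norm $\le\varphi(s)S$; the truncated Dyson-series algorithm then uses $\widetilde{\mathcal{O}}\!\big(\int_0^{s_f}\varphi(s)S\,\d s\big)=\widetilde{\mathcal{O}}(\xi S)$ block-encoding queries, each of which costs one call to $O_f$ conjugated by a pair of kinetic propagators (each a QFT, a diagonal phase, and an inverse QFT), that is, $\widetilde{\mathcal{O}}(d)$ gates, plus the $\poly(\xi S)$ arithmetic/ancilla overhead intrinsic to the Dyson-series LCU. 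Multiplying out gives $\widetilde{\mathcal{O}}(\xi S)$ queries and $\widetilde{\mathcal{O}}\!\big(\xi S(d+\poly(\xi S))\big)$ additional gates, and choosing the Dyson truncation order so that \textit{(iii)} is at most $\eta/3$ completes the bound after a final triangle inequality.

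The main obstacle is the first step: a merely bounded potential does not preserve high Sobolev regularity under multiplication, so one cannot naively run Gr\"onwall on $\|\Psi(s)\|_{H^m}$ for large $m$. The resolution must exploit both the exponential Fourier decay of the analytic initial datum and the fact that the total ``action'' $\xi S$ of the perturbation is finite, in order to show that only a band of high modes of polynomial width in $\xi S$ is ever excited to non-negligible amplitude; quantifying this band—equivalently, proving the claimed growth bound on the derivatives of $\Psi(s,\cdot)$—is the technical heart of the argument, carried out in~\append{regularity}.
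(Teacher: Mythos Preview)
Your high-level strategy—adapt the Fourier pseudo-spectral framework of~\cite{childs2022quantum}, replacing the smooth-potential regularity input by one valid for merely bounded $f$, and then invoke the interaction-picture/Dyson-series simulation—is exactly the paper's approach. The gap is in your mode count and, consequently, in your bookkeeping of where the $\poly(\xi S)$ gate overhead originates.

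You assert that $G = \poly(\xi S)\cdot\polylog(1/\eta)$ modes per coordinate suffice, and that this is the source of the $\poly(\xi S)$ factor in the gate complexity. But if $G$ were polynomial in $\xi S$, the number of qubits per coordinate would be $\log G = \mathcal{O}(\log(\xi S))$, and all QFTs and diagonal-phase arithmetic would cost only $\polylog(\xi S)$ per step—yielding a \emph{better} gate bound than the proposition actually states. The paper's regularity input (\lem{sobolev-estimates}) is precisely the ``naive Gr\"onwall on $\|\Psi(s)\|_{H^m}$'' that you warn against in your last paragraph: it gives $\|\Psi(s)\|_{H^m} \le e^{\xi S}\|\Psi_0\|_{H^m}$ uniformly in $m$, and combined with the analyticity of $\Psi_0$ this bounds $\|\Psi^{(k)}(s)\|$ by $e^{\xi S}C^{k+1}k!$. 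The paper then reads off a strip of analyticity for $\Psi(s,\cdot)$ of width $1/(Ce^{\xi S})$, forcing the truncation number to be $n = \mathcal{O}\!\big(e^{\xi S}\log(1/\eta)\big)$—\emph{exponential} in $\xi S$. The $\poly(\xi S)$ gate factor then arises because $\log n = \mathcal{O}(\xi S)$ qubits are needed per coordinate, so each QFT and phase computation costs $\poly(\xi S)$ gates; the paper says this explicitly at the end of the proof (``the space complexity using quantum computers is polynomial in $\xi S$'').

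You correctly identify the obstacle—bounded $f$ does not act boundedly on $H^m$—but your proposed resolution, that only a mode band of polynomial width in $\xi S$ is ever excited, is not what \append{regularity} proves; that appendix runs exactly the Gr\"onwall you say cannot be run, and the exponential mode count is the outcome. If you want the polynomial claim, you would need a genuinely sharper argument than anything in the paper, and the final gate complexity would then be smaller than stated.
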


To leverage the simulation algorithm as described in~\prop{spectral-method}, we need to map the QHD equation to the standard form as in~\eqn{schrodinger-standard-form}. We consider a change of variable $s \mapsto t(s)$, where $t\colon [0, s_f] \to [T_0, T_f]$ represents a time scaling (the function $t(s)$ will be determined later), and introduce a new function $\widetilde{\Psi}(s) \coloneqq \Psi(t(s))$, where $\Psi(t)$ solves the QHD equation:
\begin{equation}
    i \partial_t \Psi(t) = H(t) \Psi(t),\quad H(t) = \frac{1}{\lambda(t)} \left(-\frac{1}{2}\Delta \right) +\lambda(t) f(x).
\end{equation}
By the chain rule, we have
\begin{equation}
    i\partial_s \widetilde{\Psi}(s) = \frac{\d t}{\d s}\left[\frac{1}{\lambda(t(s))} \left(-\frac{1}{2}\Delta \right) +\lambda(t(s)) f(x) \right]\widetilde{\Psi}(s).
\end{equation}
To obtain the standard form with a Hamiltonian $-\Delta 
+ \varphi(s)f$, we require
\begin{equation}
    \frac{\d t}{\d s} = \lambda(t(s)),
\end{equation}
which is a differential equation and can be solved for each choice of $\lambda(t)$: 
\begin{enumerate}
    \item \textbf{QHD-SC}: $\lambda(t) = e^{2\sqrt{\mu}t}$ for $t \in [0, T_f]$, $t(s) = \frac{1}{2\sqrt{\mu}}\log\left(\frac{1}{1 - 2\sqrt{\mu}s}\right)$ for $s \in \left[0, \frac{1}{2\sqrt{\mu}}\left(1 - e^{-2\sqrt{\mu}T_f}\right)\right]$.
    \item \textbf{QHD-C}: $\lambda(t) = t^3$ for $t \in [T_0, T_f]$, $t(s) = \frac{1}{\sqrt{T^{-2}_0 - 2s}}$ for $s\in [0, (T^{-2}_0 - T^{-2}_f)/2]$.
    \item \textbf{QHD-NC}: $\lambda(t) = \alpha t^{1/3}$ for $t \in [T_0, T_f]$, $t(s) = (T^{2/3}_0 + \frac{2\alpha}{3}s)^{3/2}$ for $s\in \left[0, \frac{3}{2\alpha} \left(T^{2/3}_f - T^{2/3}_0\right)\right]$.
    
\end{enumerate}
Therefore, to simulate continuous-time QHD, it is equivalent to simulate the following Schr\"odinger equation
\begin{equation}\label{eqn:rescaled-cont-time-qhd}
    i \partial_s \Psi(s) = \mathcal{H}(s) \Phi(s),\quad \mathcal{H}(s) = -\frac{1}{2}\Delta +\varphi(s) f(x),\quad s \in [0, s_f],
\end{equation}
where the rescaled time-dependent function $\varphi(s)$ and the end evolution time $s_f$ is given in~\tab{rescaled-qhd-params}. Therefore, we can invoke~\prop{spectral-method} to simulate continuous-time QHD. The overall query complexity is summarized in~\cor{continuous-time-qhd-complexity}.

\begin{table}[htbp]
    \centering
        \begin{tabular}{|c|c|c|}
            \hline
            Variants of QHD & (Rescaled) time-dependent function $\varphi(s)$ & End 
            Evolution Time $s_f$ \\
            \hline
            \textbf{QHD-SC} & $\varphi(s) = (1-2\sqrt{\mu}s)^{-2}$ & $s_f  = \frac{1}{2\sqrt{\mu}}\left(1 - e^{-2\sqrt{\mu}T_f}\right)$ \\
            \hline
            \textbf{QHD-C} & $\varphi(s) = (T^{-2}_0 - 2s)^{-3}$ & $s_f =(T^{-2}_0 - T^{-2}_f)/2$ \\
            \hline
            \textbf{QHD-NC} & $\varphi(s) = \alpha^2 \left(T^{2/3}_0 + \frac{2\alpha}{3}s\right)$ & $s_f = \frac{3}{2\alpha} \left(T^{2/3}_f - T^{2/3}_0\right)$\\
            \hline
        \end{tabular}
    \caption{Time-dependent functions and corresponding end evolution time in the rescaled continuous-time QHD~\eqn{rescaled-cont-time-qhd}.}
    \label{tab:rescaled-qhd-params}
\end{table} 

\begin{corollary}\label{cor:continuous-time-qhd-complexity}
    Given an analytic initial data, the query complexity of simulating QHD is: \textit{(i)} $\widetilde{\mathcal{O}}\left(\frac{1}{\sqrt{\mu}} e^{2\sqrt{\mu}T_f}\right)$ for \textbf{QHD-SC}, \textit{(ii)} $\widetilde{\mathcal{O}}\left(T^4_f - T^4_0\right)$ for \textbf{QHD-C}, and \textit{(iii)} $\widetilde{\mathcal{O}}\left(\alpha(T^{4/3}_f - T^{4/3}_0)\right)$ for \textbf{QHD-NC}.
\end{corollary}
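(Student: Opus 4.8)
The plan is to reduce the simulation of each continuous-time QHD variant to the standard Schr\"odinger form~\eqn{rescaled-cont-time-qhd} using the time reparametrization $t=t(s)$ derived above, and then invoke~\prop{spectral-method}. Since that proposition bounds the number of queries to $O_f$ by $\widetilde{\mathcal{O}}(\xi S)$ with $\xi=\int_0^{s_f}\varphi(s)\,\d s$ and $S=\sup_{x\in\Omega}|f(x)|$, and $f$ is fixed so that $S$ is a problem-dependent constant absorbed into $\widetilde{\mathcal{O}}(\cdot)$, the proof reduces to evaluating $\xi$ in each of the three cases from the entries of~\tab{rescaled-qhd-params}.

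First I would check that the reparametrization preserves the hypotheses of~\prop{spectral-method}: on a sufficiently large periodic box $\Omega$, the map $s\mapsto t(s)$ solving $\d t/\d s=\lambda(t(s))$ is smooth and strictly increasing on its interval of definition — for \textbf{QHD-C} and \textbf{QHD-NC} this relies on $T_0>0$, which simultaneously guarantees $s_f<\infty$ and that $\varphi(s)=\lambda(t(s))^2$ stays bounded on $[0,s_f]$ — so $\widetilde{\Psi}(s)=\Psi(t(s))$ inherits the analytic, periodic initial data and solves~\eqn{rescaled-cont-time-qhd}. Then I would carry out the three elementary integrals. For \textbf{QHD-SC}, the substitution $u=1-2\sqrt{\mu}s$ gives $\xi=\frac{1}{2\sqrt{\mu}}\!\left(\frac{1}{1-2\sqrt{\mu}s_f}-1\right)$, and $1-2\sqrt{\mu}s_f=e^{-2\sqrt{\mu}T_f}$, so $\xi=\frac{1}{2\sqrt{\mu}}(e^{2\sqrt{\mu}T_f}-1)$. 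For \textbf{QHD-C}, $u=T_0^{-2}-2s$ gives $\xi=\frac14\!\left((T_0^{-2}-2s_f)^{-2}-T_0^4\right)$, and $T_0^{-2}-2s_f=T_f^{-2}$, so $\xi=\frac14(T_f^4-T_0^4)$. For \textbf{QHD-NC}, $u=T_0^{2/3}+\frac{2\alpha}{3}s$ gives $\xi=\frac{3\alpha}{4}\!\left((T_0^{2/3}+\frac{2\alpha}{3}s_f)^2-T_0^{4/3}\right)$, and $T_0^{2/3}+\frac{2\alpha}{3}s_f=T_f^{2/3}$, so $\xi=\frac{3\alpha}{4}(T_f^{4/3}-T_0^{4/3})$.

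Substituting these values of $\xi$ into the query bound $\widetilde{\mathcal{O}}(\xi S)$ of~\prop{spectral-method} yields exactly $\widetilde{\mathcal{O}}\big(\frac{1}{\sqrt{\mu}}e^{2\sqrt{\mu}T_f}\big)$, $\widetilde{\mathcal{O}}(T_f^4-T_0^4)$, and $\widetilde{\mathcal{O}}(\alpha(T_f^{4/3}-T_0^{4/3}))$; the associated gate complexities follow by feeding the same $\xi$ into the second bound of~\prop{spectral-method}. I do not expect a genuine obstacle here — all of the analytic difficulty (the Sobolev-type regularity estimate on $\Psi$ that legitimizes the truncated spectral method for a merely bounded, non-smooth potential) is already packaged inside~\prop{spectral-method} and~\append{q-sim-nonsmooth-f}. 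The only points needing care are confirming finiteness of $\xi$ (hence the assumption $T_0>0$ in the convex and non-convex variants) and correctly tracking the squaring $\varphi=\lambda^2$ produced by the time rescaling, which is what turns the exponents $3$ and $1/3$ in $\lambda$ into the $4$ and $4/3$ appearing in the final complexities.
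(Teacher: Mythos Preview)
Your proposal is correct and follows essentially the same approach as the paper: reduce to the standard form via the time reparametrization and apply \prop{spectral-method}, so the query complexity is $\widetilde{\mathcal{O}}(\xi S)$ with $\xi=\int_0^{s_f}\varphi(s)\,\d s$. The only difference is cosmetic: rather than computing the three $s$-integrals separately, the paper observes once and for all that $\varphi(s)=\lambda(t(s))^2=\lambda(t(s))\,\frac{\d t}{\d s}$, so the change of variable $s\mapsto t$ gives $\xi=\int_{T_0}^{T_f}\lambda(t)\,\d t$ directly, after which the three cases are immediate antiderivatives of $e^{2\sqrt{\mu}t}$, $t^3$, and $\alpha t^{1/3}$.
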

\begin{proof}
    By the change of variable, we have
    \begin{equation}
        \int^{s_f}_0 \varphi(s)\d s = \int^{s_f}_0 \lambda(t(s)) \frac{\d t}{\d s} \d s = \int^{T_f}_{T_0} \lambda(t) \d t.
    \end{equation}
    Then, we can compute the query complexity based on~\prop{spectral-method}.
\end{proof}

\begin{remark}
    We note that the query complexity of \textbf{QHD-SC} scales exponentially in the evolution time $T_f$. In the following section, we will show that \textbf{QHD-SC} converges to the global minimum at a linear rate, meaning that the evolution time can be $\mathcal{O}(\log(1/\epsilon))$, where $\epsilon$ is a target optimality gap. As a result, the overall query complexity of \textbf{QHD-SC} still scales polynomially in $1/\epsilon$.
\end{remark}

\subsection{Convergence analysis}

\subsubsection{Weak derivative and subgradient}\label{sec:weak-derivative}
In this section, we introduce the notion of weak derivatives for non-differentiable functions.  

\begin{definition}
    A function $f\colon \R^d \to \R$ is a \textit{locally integrable} function, denoted by $f \in L^1_{\mathrm{loc}}(\R^d)$, if 
    \begin{align}
        \int_{\R^d} |f(\xx) \varphi(\xx)|~ \d \xx < +\infty
    \end{align}
    for any test function $\varphi \in C^\infty_c(\R^d)$, i.e., $\varphi\colon \R^d \to \R$ is a compactly supported, smooth function.
\end{definition}

\begin{definition}
    Let $f \in L^1_{\mathrm{loc}}(\R^d)$ and $\gg = (g_1,\dots,g_d)\trans$ with $g_j \in L^1_{\mathrm{loc}}(\R^d)$ for all $j = 1,\dots,d$. We say that $g_j$ is a \textit{weak partial derivative} of $f$ if
    \begin{equation}
        \int f \partial_j \varphi~\d x = - \int g_j \varphi~\d x
    \end{equation}
    for all test functions $\varphi \in C^\infty_c(\R^d)$. 
    If $\gg$ exists, we say $f$ is \textit{weakly differentiable}, and $\gg$ in the \textit{weak gradient} of $f$.
\end{definition}

A weak derivative of $f$, if exists, is unique up to a set of measure zero~\cite{evans2022partial}.
Similar to strong derivatives, we have the integral by parts formula for weak derivatives, as summarized in the following lemma.

\begin{lemma}[Integral by parts]\label{lem:weak-der-c}
    Suppose that $f \in L^1_{\mathrm{loc}}(\R^d)$ admits a weak gradient $\gg$. Let $\Phi(\xx) \colon \R^d \to \R^d$ such that $\Phi_j(\xx) \in W^{1,1}(\R^d) \cap C^\infty(\R^d)$ for each $j = 1,\dots, d$. Then, we have
    \begin{align}
        \int (\nabla \cdot \Phi) f ~\d \xx = -\int \Phi \cdot \gg ~\d \xx.
    \end{align}
\end{lemma}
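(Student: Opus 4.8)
\textbf{Proof proposal for \lem{weak-der-c} (integral by parts for weak derivatives).}

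The plan is to reduce the $d$-dimensional identity to the one-dimensional definition of the weak partial derivative by a density/approximation argument, since $\Phi$ is assumed smooth but need not be compactly supported (so we cannot immediately plug $\Phi_j$ in as a test function). First I would fix a cutoff sequence: let $\chi \in C^\infty_c(\R^d)$ with $\chi \equiv 1$ on the unit ball, $0 \le \chi \le 1$, and set $\chi_R(\xx) = \chi(\xx/R)$, so that $\nabla \chi_R$ is supported on an annulus of radius $\sim R$ and $\|\nabla \chi_R\|_\infty = O(1/R)$. For each $R$ and each $j$, the function $\chi_R \Phi_j$ lies in $C^\infty_c(\R^d)$, so it is a legitimate test function, and the definition of the weak partial derivative $g_j$ gives
\begin{align}
    \int f\, \partial_j(\chi_R \Phi_j)~\d \xx = -\int g_j\, \chi_R \Phi_j~\d \xx.
\end{align}
Expanding $\partial_j(\chi_R \Phi_j) = \chi_R \partial_j \Phi_j + (\partial_j \chi_R)\Phi_j$ and summing over $j = 1,\dots,d$ yields
\begin{align}
    \int f\, \chi_R\, (\nabla \cdot \Phi)~\d \xx + \sum_{j=1}^d \int f\, (\partial_j \chi_R)\, \Phi_j~\d \xx = -\int \chi_R\, \gg \cdot \Phi~\d \xx.
\end{align}

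The next step is to pass to the limit $R \to \infty$ in all three integrals. For the first and third terms, $\chi_R \to 1$ pointwise and is dominated by $1$, so by dominated convergence they converge to $\int f (\nabla\cdot\Phi)~\d\xx$ and $-\int \gg\cdot\Phi~\d\xx$ respectively; here I would use the hypotheses $\Phi_j \in W^{1,1}(\R^d)$ and $g_j \in L^1_{\mathrm{loc}}$ together with suitable integrability of $f$ to ensure the integrands $f(\nabla\cdot\Phi)$ and $\gg\cdot\Phi$ are in $L^1(\R^d)$. The middle (error) term is the crux: I would bound it by $\sum_j \|\partial_j \chi_R\|_\infty \int_{R \le |\xx| \le CR} |f\,\Phi_j|~\d\xx \le \frac{C}{R}\int_{|\xx|\ge R} |f\,\Phi|~\d\xx$, and argue this tends to $0$. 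If $f\Phi_j \in L^1$, the tail integral $\int_{|\xx|\ge R}|f\Phi_j| \to 0$ and the $1/R$ factor only helps; more carefully, if only $\Phi_j \in W^{1,1}$ one can invoke the standard fact that a $W^{1,1}$ function has a representative whose average decay makes $\frac{1}{R}\int_{|\xx|\le CR}|\Phi_j| \to 0$ (Sobolev/Poincaré-type tail estimate), combined with local boundedness or growth control on $f$.

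The main obstacle is precisely handling this boundary error term under the stated hypotheses, because the lemma as written does not pin down the growth of $f$ at infinity — in the application $f$ is the objective function, which may grow, while $\Phi$ will be built from the wave function $\Psi$ and is genuinely integrable with integrable derivatives. I expect the intended reading is that the pairing $\int f (\nabla\cdot\Phi)~\d\xx$ and $\int \gg\cdot\Phi~\d\xx$ are assumed finite (which is how the lemma will be used: $\Phi$ decays fast enough, being essentially $\overline{\Psi}\,\Psi$-type quantities with Schwartz-like decay from the $C^\infty$ regularity estimates in \append{regularity}), so that the cutoff error vanishes. I would therefore state the clean argument via $\chi_R$ and note that the error term vanishes whenever $f\Phi \in L^1(\R^d)$, which holds in every instance where the lemma is invoked; the uniqueness of the weak gradient up to measure zero (already cited from \cite{evans2022partial}) guarantees the right-hand side is well-defined independently of the choice of representative of $\gg$.
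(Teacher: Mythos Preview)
Your approach is essentially the same as the paper's: approximate $\Phi$ by compactly supported smooth test functions and pass to the limit via dominated convergence. The paper reduces to $d=1$ and invokes an abstract approximating sequence $\varphi_k \in C^\infty_c(\R)$ with $\varphi_k \to \Phi$ pointwise, whereas your explicit cutoff $\chi_R \Phi_j$ is simply one concrete way to build that sequence; your discussion of the error term $\sum_j \int f(\partial_j\chi_R)\Phi_j$ and the implicit integrability assumption $f\Phi \in L^1$ is in fact more careful than the paper's own treatment, which glosses over the domination needed for the first limit.
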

\begin{proof}
It suffices to prove the lemma for $d = 1$. Since $\Phi(x) \in W^{1,1}(\R)\cap C^\infty(\R)$, we can construct a sequence of test functions $\{\varphi_k\}^\infty_{k=1} \subset C^\infty_c(\R)$ such that \textit{(i)} $\varphi_k(x) \le \Phi(x)$ for all $x$, and \textit{(ii)} $\varphi_k(x) \to \Phi(x)$ for every $x$. Then, by the dominated convergence theorem, we have
\begin{align}
    \int (\partial_x \Phi)(x) f(x) \d x = \lim_{k\to\infty} \int (\partial_x  \varphi_k)(x) f(x) \d x
    = \lim_{k\to \infty} -\int  \varphi_k(x) g(x) \d x = - \int \Phi(x) g(x) \d x
\end{align}
\end{proof}

In general, even if a function $f$ is differentiable almost everywhere with a classical derivative $f'$ defined over $\R^d \setminus \Omega_f$, the classical derivative $f'$ does not necessarily coincide with the weak derivative of $f$. For example, we consider the indicator function $\chi_{(0,1)}$ defined over the interval $(0,1)$. It is straightforward to check that the weak derivative of $\chi_{(0,1)}$ is $\delta_0 - \delta_1$, i.e., the difference between two Dirac measures. However, the classical derivative of $\chi_{(0,1)}$ is $0$ except for $x \in \{0,1\}$.
In this paper, we assume $f$ is Lipschitz continuous, which implies that $f$ is differentiable almost everywhere and $f'$ automatically corresponds to a weak derivative of $f$.

\begin{theorem}
    Assume $f\colon \R^d \to \R$ is locally Lipschitz. Then, $f$ is differentiable almost everywhere, and its gradient equals its weak gradient almost everywhere.
\end{theorem}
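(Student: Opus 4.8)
This is Rademacher's theorem combined with the standard fact that the a.e.-defined classical gradient of a Lipschitz function is a weak gradient, so the plan follows the classical route. First I would reduce to the globally Lipschitz case: since $\R^d$ is a countable union of open balls $B_k$ on each of which $f$ agrees with some globally $L_k$-Lipschitz function $f_k$, and since both conclusions are local — a.e.\ differentiability of $f$ on $B_k$ is the same as that of $f_k$, and the weak gradient of $f$, being unique up to a null set, must coincide on $B_k$ with the weak gradient of $f_k$ — it suffices to prove the theorem for a fixed globally $L$-Lipschitz $f \in L^1_{\mathrm{loc}}(\R^d)$. \emph{Step 1 (directional derivatives exist a.e.).} For a fixed unit vector $v$, the map $t \mapsto f(x+tv)$ is $L$-Lipschitz on $\R$, hence absolutely continuous, hence differentiable at a.e.\ $t$ by Lebesgue's differentiation theorem; foliating $\R^d$ by lines in direction $v$ and applying Fubini then shows the directional derivative $D_v f(x) := \lim_{t\to 0}\frac{f(x+tv)-f(x)}{t}$ exists for a.e.\ $x$. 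In particular each partial derivative $\partial_j f$ exists a.e., with $|\partial_j f| \le L$; set $\nabla f := (\partial_1 f, \dots, \partial_d f)$.

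\emph{Step 2 (the classical gradient is a weak gradient).} Fix $j$ and $\varphi \in C^\infty_c(\R^d)$. Substituting $x \mapsto x+he_j$ in one integral gives the exact identity
\[
  \int f(x)\,\frac{\varphi(x)-\varphi(x-he_j)}{h}\,\d x \;=\; -\int \frac{f(x+he_j)-f(x)}{h}\,\varphi(x)\,\d x .
\]
As $h \to 0$, the left integrand converges uniformly on the fixed compact set $\operatorname{supp}\varphi$ to $f\,\partial_j\varphi$, while on the right the difference quotient $\frac{f(x+he_j)-f(x)}{h}$ converges a.e.\ to $(\partial_j f)(x)$ and is bounded by $L$; dominated convergence on both sides yields $\int f\,\partial_j\varphi\,\d x = -\int (\partial_j f)\,\varphi\,\d x$. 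Hence $\partial_j f$ is a weak partial derivative of $f$, and by uniqueness of weak derivatives the weak gradient of $f$ equals $\nabla f$ a.e. The same computation with $e_j$ replaced by an arbitrary unit vector $v$ (and using the identity just proved) gives $\int D_v f\,\varphi\,\d x = -\int f\,(v\cdot\nabla\varphi)\,\d x = \int (v\cdot\nabla f)\,\varphi\,\d x$, so for each fixed $v$, $D_v f(x) = v\cdot\nabla f(x)$ for a.e.\ $x$.

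\emph{Step 3 (upgrade to Fréchet differentiability).} Choose a countable dense set $\{v_k\} \subset S^{d-1}$ and let $A$ be the full-measure set of points $x$ at which all partials exist and $D_{v_k}f(x) = v_k \cdot \nabla f(x)$ for every $k$. For $x \in A$, set $Q(v,t) := \frac{f(x+tv)-f(x)}{t} - v\cdot\nabla f(x)$. For each $t$ the map $v \mapsto Q(v,t)$ is Lipschitz on $S^{d-1}$ with a constant depending only on $L$ and $d$; given $\varepsilon > 0$, pick finitely many $v_{k_1},\dots,v_{k_N}$ forming a sufficiently fine net of $S^{d-1}$, and since $Q(v_{k_i},t) \to 0$ as $t \to 0$ for each $i$, a standard net estimate yields $\sup_{v \in S^{d-1}} |Q(v,t)| \to 0$ as $t \to 0$. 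Writing $w = tv$, this is precisely $f(x+w) = f(x) + w\cdot\nabla f(x) + o(|w|)$, so $f$ is differentiable at every $x \in A$; as $A$ has full measure, $f$ is differentiable a.e.

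The main obstacle is Step 3: promoting the pointwise, one-direction-at-a-time information of Steps 1–2 to an estimate on the difference quotient that is \emph{uniform in the direction}. This is exactly where the global Lipschitz bound (which forces $Q(\cdot,t)$ to be equi-Lipschitz in $v$), the density of $\{v_k\}$, and the compactness of $S^{d-1}$ must all be combined. Steps 1 and 2, by contrast, are routine consequences of the one-dimensional Lebesgue differentiation theorem, Fubini's theorem, and dominated convergence, and the weak-gradient statement is obtained with no reference to Step 3.
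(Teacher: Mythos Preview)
Your argument is correct and complete: the reduction to the globally Lipschitz case is legitimate (locally Lipschitz implies Lipschitz on compact sets, and both conclusions localize), Step~1 is the usual Fubini/one-dimensional Lebesgue argument, Step~2 is the standard difference-quotient computation identifying the a.e.\ classical gradient with the weak gradient, and Step~3 is exactly the classical Rademacher upgrade via the equi-Lipschitz dependence of $Q(v,t)$ on $v$ and a countable dense set of directions. Each step is carried out accurately; in particular, the uniform Lipschitz bound $|Q(v,t)-Q(w,t)|\le (L+|\nabla f(x)|)\,|v-w|$ holds and drives the net argument.

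The paper does not actually prove this statement: it simply invokes two theorems from Evans's textbook. What you have written is precisely the standard proof those citations point to (Rademacher's theorem plus the identification of the classical and distributional gradients for Lipschitz functions), so there is no genuine methodological difference---you have supplied the content of the citation rather than an alternative route. The only thing the paper's approach ``buys'' is brevity; yours buys self-containment and makes transparent why the weak-gradient identification (your Step~2) is independent of, and in fact logically prior to, the Fr\'echet differentiability in Step~3.
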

\begin{proof}
    This is a direct consequence of Theorem 4 and Theorem 5 in Section 5 of \cite{evans2022partial}.
\end{proof}

\begin{corollary}\label{cor:lipschitz-subgradient}
    Assume $f\colon \R^d \to \R$ is locally Lipschitz. If $\gg$ is a subgradient of $f$, then $\gg$ equals the weak derivative of $f$ almost everywhere.
\end{corollary}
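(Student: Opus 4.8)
The plan is to combine the previous theorem (for a locally Lipschitz $f$, the classical gradient exists almost everywhere and coincides with the weak gradient a.e.) with the observation that a subgradient, where it is simultaneously a point of differentiability, must equal the classical gradient. First I would recall that $\partial f(\xx)$ is the set of \emph{all} subgradients at $\xx$; the statement ``if $\gg$ is a subgradient of $f$'' should be read as: $\gg\colon \R^d\to\R^d$ is a measurable selection $\gg(\xx)\in\partial f(\xx)$ (a ``subgradient field''), and the claim is $\gg(\xx)=\nabla f(\xx)$ for a.e.\ $\xx$. So the reduction is: (i) by the previous theorem, let $N_1$ be the null set off which $f$ is classically differentiable and $\nabla f$ equals the weak gradient; (ii) show that at every point $\xx\notin N_1$ where $\gg(\xx)\in\partial f(\xx)$, one has $\gg(\xx)=\nabla f(\xx)$.

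For step (ii), fix $\xx\notin N_1$ and any $\gg\in\partial f(\xx)$. The subgradient inequality gives $f(\xx+h\vv)-f(\xx)\ge h\,\gg\trans\vv$ for all $h>0$ and all unit vectors $\vv$; dividing by $h$ and sending $h\downarrow 0$ uses differentiability at $\xx$ to get the directional derivative $\nabla f(\xx)\trans\vv \ge \gg\trans\vv$. Applying the same with $-\vv$ yields $\nabla f(\xx)\trans\vv \le \gg\trans\vv$, hence $\nabla f(\xx)\trans\vv=\gg\trans\vv$ for all $\vv$, so $\gg=\nabla f(\xx)$. Combining with (i): for a.e.\ $\xx$, any subgradient at $\xx$ equals $\nabla f(\xx)$, which equals the weak gradient of $f$ at $\xx$. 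This is exactly the assertion.

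One subtlety worth flagging: the existence of a subgradient at $\xx$ at all is a strong (in fact convexity-type) condition — for a genuinely non-convex locally Lipschitz $f$ the subdifferential $\partial f(\xx)$ may be empty at many points, so the corollary is vacuous there and nontrivial only on the set where subgradients do exist. The statement as phrased is still correct: wherever $\gg$ is a subgradient, the pointwise identity $\gg=\nabla f$ holds off a fixed null set, and $\nabla f$ agrees a.e.\ with the weak derivative by the preceding theorem. I expect the main (minor) obstacle to be purely expository: making precise what ``$\gg$ is a subgradient of $f$'' means as a global object and ensuring the a.e.\ statement is interpreted on the common domain of definition, rather than any real analytic difficulty — the limiting argument in (ii) is elementary once differentiability at $\xx$ is in hand.
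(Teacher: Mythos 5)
Your argument is correct and is essentially the paper's own proof: the paper simply asserts that a subgradient must coincide with the strong gradient wherever the latter exists and then invokes the preceding theorem, while you additionally spell out the elementary two-sided directional-derivative argument establishing that fact. No substantive difference in approach.
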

\begin{proof}
    This follows from the fact that the subgradient $\gg$ must coincide with the strong gradient $\nabla f$ wherever it exists. 
\end{proof}

\subsubsection{Case I: non-smooth strongly convex optimization}
The main convergence result for \textbf{QHD-SC} is summarized as follows.

\begin{theorem}\label{thm:qhd-strongly-convex}
     Suppose that $f(x)$ is locally integrable, Lipschitz continuous, $\mu$-strongly convex, and potentially non-smooth. Let $\Psi(t,\xx)$ be the solution to QHD-SC~\eqn{qhd-sc} subject to a smooth initial state $\Psi_0(\xx)$, and $X_t$ be a random variable following the distribution $|\Psi(t,\xx)|^2$. Then, we have
    \begin{align}
        \mathbb{E}[f(X_t)] - f(\xx^*) \le O(e^{-\sqrt{\mu}t}).
    \end{align}
\end{theorem}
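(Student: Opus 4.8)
The plan is to establish convergence via a Lyapunov (energy reduction) argument adapted to the strongly convex setting, mirroring the Bregman-Hamiltonian framework reviewed in \sec{qhd-review} but working around the non-smoothness of $f$ using the weak-derivative machinery of \sec{weak-derivative}. Following the identification of \textbf{QHD-SC}~\eqn{qhd-sc} with the Bregman Hamiltonian, the relevant scaling is $\alpha_t = -\sqrt{\mu}\,t$, $\gamma_t = \sqrt{\mu}\,t$, $\beta_t = 2\sqrt{\mu}\,t$, so the target rate $e^{-\beta_t/2} = e^{-\sqrt{\mu}t}$ is exactly what one expects from the classical strongly convex ODE. Concretely, I would introduce a quantum energy functional of the form
\begin{align}
    \mathcal{W}_{\mathrm{SC}}(t) \coloneqq \int_{\R^d} \overline{\Psi(t,\xx)}\,\hat{O}_{\mathrm{SC}}(t)\,\Psi(t,\xx)~\d\xx, \quad \hat{O}_{\mathrm{SC}}(t) \coloneqq \frac{1}{2}\sum_{j=1}^d \big\| e^{-\gamma_t}\hat{p}_j + \sqrt{\mu}\,(\hat{x}_j - x^*_j)\big\|^2 + e^{\beta_t}\big(f(x) - f(x^*)\big),
\end{align}
where the extra $\sqrt{\mu}$ weighting inside the squared term (relative to the merely-convex case) is precisely the modification that strong convexity permits; one should also allow for an additional $\mu e^{\beta_t - 2\gamma_t}\|\hat{x} - x^*\|^2$-type term, with coefficients to be pinned down so that the cross terms cancel. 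WLOG one can assume $x^* = 0$ and $f(x^*) = 0$ by translation.

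The core computation is to differentiate $\mathcal{W}_{\mathrm{SC}}(t)$ in $t$ and show $\dot{\mathcal{W}}_{\mathrm{SC}}(t) \le 0$. This splits into two pieces: the explicit time dependence, $\int \overline{\Psi}\,(\partial_t \hat{O}_{\mathrm{SC}})\,\Psi$, which produces terms proportional to $\dot\beta_t e^{\beta_t} f$, $\dot\gamma_t$ times kinetic-type terms, etc.; and the dynamical piece coming from the Schrödinger equation, $i\int \overline{\Psi}\,[\hat{O}_{\mathrm{SC}}, H_{\mathrm{SC}}(t)]\,\Psi$. The commutator $[\hat{O}_{\mathrm{SC}}, H_{\mathrm{SC}}]$ will contain a term of the shape $[f(\hat x), \hat p_j]$, which formally equals $-i\,\partial_j f$ — but $f$ is non-smooth, so this must be interpreted in the weak sense. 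This is where \lem{weak-der-c} enters: rather than writing $[f,\hat p_j]$ pointwise, I would expand $\int \overline{\Psi}\,[\hat{O}_{\mathrm{SC}}, f(\hat x)]\,\Psi$ into integrals of the form $\int (\nabla\cdot \Phi)\, f\,\d\xx$ with $\Phi$ built from $\Psi, \bar\Psi$ and their derivatives, then apply integration by parts to replace $f$ by its weak gradient $\gg$. For this step to be legitimate one needs $\Psi(t,\cdot)$ to be smooth (indeed $C^\infty$ with integrable derivatives so $\Phi_j \in W^{1,1}\cap C^\infty$), which is exactly the regularity input flagged in \append{regularity} and the Sobolev estimate behind \prop{spectral-method}. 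Once $f$ has been converted to $\gg$, I use \cor{lipschitz-subgradient} to treat $\gg$ as a genuine subgradient, and then invoke the strong convexity inequality $f(\yy) \ge f(\xx) + \gg^\top(\yy-\xx) + \frac{\mu}{2}\|\yy-\xx\|^2$ — integrated against $|\Psi|^2$ with $\yy = x^*$, $\xx = X_t$ — to dominate the "bad" sign terms. The ideal-scaling relations $\dot\gamma_t = e^{\alpha_t}$ and the strong-convexity slack $\frac{\mu}{2}\|\cdot\|^2$ together should force all surviving terms to be $\le 0$.

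Given $\dot{\mathcal{W}}_{\mathrm{SC}}(t) \le 0$, one concludes $\mathcal{W}_{\mathrm{SC}}(t) \le \mathcal{W}_{\mathrm{SC}}(0) =: C_0$, a finite constant depending only on $\Psi_0$ (here smoothness/decay of $\Psi_0$ is used to ensure $C_0 < \infty$). Since the squared-operator terms in $\hat{O}_{\mathrm{SC}}$ are positive semidefinite, this yields $e^{\beta_t}\,\mathbb{E}[f(X_t) - f(x^*)] \le C_0$, i.e. $\mathbb{E}[f(X_t)] - f(x^*) \le C_0\, e^{-2\sqrt{\mu}t} = O(e^{-\sqrt{\mu}t})$ — in fact the argument as sketched gives the stronger $O(e^{-2\sqrt{\mu}t})$, and the theorem's $O(e^{-\sqrt{\mu}t})$ bound follows a fortiori (the weaker statement perhaps left to absorb lower-order corrections). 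The main obstacle I anticipate is not the algebra of the Lyapunov derivative per se, but rigorously justifying the manipulations at the non-smooth points: making the commutator-to-weak-gradient conversion airtight requires knowing the Schrödinger solution is smooth enough (globally in $x$, with enough decay) that all boundary/vanishing-at-infinity terms in the integration by parts disappear, and that $\Phi_j$ genuinely lands in $W^{1,1}\cap C^\infty$; establishing that regularity for a Hamiltonian with only a bounded (not smooth) potential is the delicate analytic point, and is presumably handled by the regularity estimates deferred to the appendix. A secondary subtlety is choosing the auxiliary coefficients in $\hat{O}_{\mathrm{SC}}$ (the relative weights of the kinetic, cross, and $\|\hat x\|^2$ terms) so that the explicit-time-derivative terms and the commutator terms cancel exactly; this is a finite linear-algebra matching that I would do by ansatz, guided by the known classical strongly convex Lyapunov function.
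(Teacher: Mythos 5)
Your overall architecture is the right one and matches the paper's: a Lyapunov/energy argument, with the commutator term $\langle i[H_{\mathrm{SC}},\hat O_{\mathrm{SC}}]\rangle$ rewritten via \lem{weak-der-c} as $\int \nabla\cdot(|\Psi|^2\xx)\,f\,\d\xx = -\langle \xx\cdot\gg\rangle_t$ using \cor{lipschitz-subgradient}, the strong-convexity inequality $\xx\cdot\gg \ge f + \tfrac{\mu}{2}\|\xx\|^2$, and the $C^\infty$ regularity of $\Psi(t,\cdot)$ from \append{regularity} to justify the integration by parts. However, your specific Lyapunov ansatz does not work, and the failure is structural rather than a matter of "pinning down coefficients." You put the growing weight $e^{\beta_t}f = e^{2\sqrt{\mu}t}f$ inside $\hat O_{\mathrm{SC}}$ and aim to prove $\dot{\mathcal{W}}_{\mathrm{SC}}\le 0$. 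The explicit time derivative then produces the positive term $\dot\beta_t e^{\beta_t}\langle f\rangle_t = 2\sqrt{\mu}\,e^{2\sqrt{\mu}t}\langle f\rangle_t$. The only mechanism that generates a negative multiple of $\langle f\rangle_t$ is the cross term $e^{-\gamma_t}\sqrt{\mu}\,\{\hat x_j,\hat p_j\}$ commuted against the potential $e^{2\sqrt{\mu}t}f$ in $H_{\mathrm{SC}}$, which after integration by parts and strong convexity yields $-\sqrt{\mu}\,e^{-\gamma_t}e^{2\sqrt{\mu}t}\langle f\rangle_t = -\sqrt{\mu}\,e^{\sqrt{\mu}t}\langle f\rangle_t$ for your $\gamma_t=\sqrt{\mu}t$. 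This is exponentially smaller than the positive term it must cancel, so $\dot{\mathcal{W}}_{\mathrm{SC}}>0$ for large $t$; no finite retuning of the kinetic/cross/$\|\hat x\|^2$ coefficients repairs a mismatch in exponential order. (This is also why your claimed $O(e^{-2\sqrt{\mu}t})$ rate cannot come out of this argument; note the ideal scaling condition $\dot\beta_t\le e^{\alpha_t}$ fails badly for your identification, since $\dot\beta_t=2\sqrt{\mu}$ while $e^{\alpha_t}=e^{-\sqrt{\mu}t}\to 0$.)

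The paper avoids this by changing the decay mechanism: its $\hat O_{\mathrm{SC}} = f + \tfrac14 e^{-4\sqrt{\mu}t}\sum_j p_j^2 + \tfrac14\sum_j J_j^2$ with $J_j = e^{-2\sqrt{\mu}t}p_j + 2\sqrt{\mu}x_j$ carries $f$ with \emph{constant} weight (so $\partial_t$ contributes no positive $f$ term), the cross-term coefficient $2\sqrt{\mu}e^{-2\sqrt{\mu}t}$ is chosen so that the commutator with $e^{2\sqrt{\mu}t}f$ delivers a negative term $\propto\langle f\rangle_t$ of order one, and one proves the \emph{differential inequality} $\dot{\mathcal{E}}(t)\le -\tfrac{\sqrt{\mu}}{4}\mathcal{E}(t)$ followed by Gr\"onwall, rather than plain monotonicity of a weighted energy. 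If you insist on a monotone weighted energy, the weight on $f$ must be $e^{\sqrt{\mu}t}$ (matching the target rate), with the companion coefficients rescaled accordingly so that the $[f,p_j^2]$ cross-commutators cancel and the $\|\hat x\|^2$ growth is absorbed by the $\tfrac{\mu}{2}\|\xx\|^2$ slack; as written, your proposal proves neither the claimed $O(e^{-2\sqrt{\mu}t})$ nor the theorem's $O(e^{-\sqrt{\mu}t})$.
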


\begin{proof}
    We prove~\thm{qhd-strongly-convex} using a Lyapunov function argument. For $t\ge 0$, let $\Psi(t,\xx)$ be the solution to \textbf{QHD-SC}, i.e., 
\begin{equation}
    i \partial_t \Psi(t) = H_{\mathrm{SC}}(t)\Psi(t),\quad
    H_{\mathrm{SC}}(t) = e^{-2\sqrt{\mu}t} \left(-\frac{1}{2} \Delta \right) + e^{2\sqrt{\mu}t} f(x),
\end{equation}
subject to an initial state $\Psi(0,\xx) = \Psi_0(\xx)$.
Without loss of generality, we assume $f$ has a unique global minimum $f(\xx^*) = 0$ with $\xx^* = 0$. We consider the following Lyapunov function:
\begin{align}\label{eqn:lyapunov-f-strongly-convex}
    \mathcal{E}(t) = \left\langle \hat{O}_{\mathrm{SC}}(t)\right\rangle_t,\quad \hat{O}_{\mathrm{SC}}(t) \coloneqq f + \frac{1}{4}e^{-4\sqrt{\mu}t}\sum^d_{j=1}p^2_j + \frac{1}{4}\sum^d_{j=1}J^2_j,\quad J_j = e^{-2\sqrt{\mu}t}p_j + 2\sqrt{\mu}x_j.
\end{align}
Here, the operators $p_j$ and $x_j$ are the momentum and position operators for the $j$-th coordinate, as described in~\eqn{position-momentum-ops}.
As shown in~\lem{lyapunov-estimate-sc}, the Lyapunov function $\mathcal{E}(t)$ is strictly non-increasing in time: 
\begin{align}
    \dot{\mathcal{E}}(t) \le -\frac{\sqrt{\mu}}{4}\mathcal{E}(t) < 0.
\end{align}
Moreover, by Gr\"onwall's inequality, for any $t \ge 0$, we have 
\begin{equation}\label{eqn:proof-sc-1}
    \mathcal{E}(t) \le e^{-\frac{\sqrt{\mu}}{4}t}\mathcal{E}(0)
\end{equation}
for any $t \ge 0$. 
Meanwhile, note that the operators $p^2_j$ and $J^2_j$ are positive for each $j = 1,\dots, d$. By the definition of the Lyapunov function, we have 
\begin{equation}\label{eqn:proof-sc-2}
    \mathcal{E}(t) = \langle f \rangle_t + \frac{1}{4}e^{-4\sqrt{\mu}t}\sum^d_{j=1}\langle p^2_j\rangle_t + \frac{1}{4}\sum^d_{j=1}\langle J^2_j\rangle_t \ge \langle f \rangle_t.
\end{equation}
Combining~\eqn{proof-sc-1} and \eqn{proof-sc-2}, we show the exponential convergence of \textbf{QHD-SC}:
\begin{align}
    \langle f \rangle_t \le e^{-\frac{\sqrt{\mu}}{4}t}\mathcal{E}(0),
\end{align}
which concludes the proof of~\thm{qhd-strongly-convex}.
\end{proof}

\begin{lemma}\label{lem:lyapunov-estimate-sc}
Let $f$ be a locally integrable and Lipschitz continuous function. For $t \ge 0$, we denote $\Psi(t,\xx)$ as the solution to \textbf{QHD-SC} with an initial state $\Psi_0(\xx) \in C^\infty(\R^d)\cap L^2(\R^d)$. Let the function $\mathcal{E}(t)$ be the same as in~\eqn{lyapunov-f-strongly-convex}. Then, for any $t \ge 0$, $\Psi(t,\xx)$ is a smooth function in $\xx$ and
\begin{align}
    \dot{\mathcal{E}}(t) = \sqrt{\mu}\left[-e^{-4\sqrt{\mu}t}\sum^d_{j=1}\langle p^2_j\rangle_t + \int \nabla \cdot \left(|\Psi(t,\xx)|^2\xx\right) f(\xx) \d \xx\right].
\end{align}
Moreover, if $f$ is $\mu$-strongly convex with $\xx^* = 0$ and $f(\xx^*) = 0$, the following holds for all $t \ge 0$:
\begin{align}
    \dot{\mathcal{E}}(t) \le -\frac{\sqrt{\mu}}{4}\mathcal{E}(t).
\end{align}
\end{lemma}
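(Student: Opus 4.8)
The plan is to regard $\mathcal{E}(t)$ as the expectation of a degree-two polynomial in the operators $x_j,p_j$ together with the multiplication operator $f$, and to differentiate it via Ehrenfest's theorem, using the weak-derivative machinery of \lem{weak-der-c} and \cor{lipschitz-subgradient} to circumvent the non-differentiability of $f$. First I would invoke the $C^\infty$ estimate for the \sdg{} flow from \append{regularity}: it shows that $\Psi(t,\cdot)$ is smooth and decays (together with all its derivatives) for every $t$, which is simultaneously the "smooth in $\xx$" assertion of the lemma and the justification for all the integrations by parts that follow. Since $p_j$ and $x_j$ do not commute, $J_j^2 = e^{-4\sqrt{\mu}t}p_j^2 + 2\sqrt{\mu}e^{-2\sqrt{\mu}t}(p_jx_j+x_jp_j) + 4\mu x_j^2$, so, writing $A(t)=\sum_j\langle p_j^2\rangle_t$, $B(t)=\sum_j\langle p_jx_j+x_jp_j\rangle_t$, $C(t)=\sum_j\langle x_j^2\rangle_t$, we get
\[
    \mathcal{E}(t) = \langle f\rangle_t + \tfrac12 e^{-4\sqrt{\mu}t}A(t) + \tfrac{\sqrt{\mu}}{2}e^{-2\sqrt{\mu}t}B(t) + \mu\,C(t).
\]

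Next I would differentiate term by term, using $\frac{\d}{\d t}\langle \hat A\rangle_t = i\langle[H_{\mathrm{SC}}(t),\hat A]\rangle_t$ for the time-independent operators, together with $[x_j,p_j]=i$ and $H_{\mathrm{SC}}(t)=\tfrac12 e^{-2\sqrt{\mu}t}\sum_k p_k^2 + e^{2\sqrt{\mu}t}f$. The purely kinematic commutators give $\frac{\d}{\d t}\langle x_j^2\rangle_t = e^{-2\sqrt{\mu}t}\langle p_jx_j+x_jp_j\rangle_t$ and $\frac{\d}{\d t}\langle p_jx_j+x_jp_j\rangle_t = 2e^{-2\sqrt{\mu}t}\langle p_j^2\rangle_t - 2e^{2\sqrt{\mu}t}\langle x_j\partial_jf\rangle_t$. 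The one place where non-smoothness bites is $\frac{\d}{\d t}\langle p_j^2\rangle_t$ and $\frac{\d}{\d t}\langle f\rangle_t$: the naive operator $[p_j^2,f]$ would produce the ill-defined $\partial_j^2 f$, but computing $\frac{\d}{\d t}\langle p_j^2\rangle_t = 2\,\mathrm{Im}\,e^{2\sqrt{\mu}t}\langle p_j\Psi\,|\,p_j(f\Psi)\rangle$ directly from the \sdg{} equation and moving exactly one derivative off $f$ yields $\langle[p_j^2,f]\rangle_t = -2i\int(\partial_jf)\,\mathrm{Im}(\overline{\Psi}\partial_j\Psi)\,\d\xx$, which only uses $\nabla f\in L^\infty$ (valid since $f$ is Lipschitz, by \cor{lipschitz-subgradient}); the same single-derivative integral, with coefficient $e^{-2\sqrt{\mu}t}$, equals $\frac{\d}{\d t}\langle f\rangle_t$.

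Substituting all of this into $\dot{\mathcal{E}}$, the contributions of $\sum_j\int(\partial_jf)\mathrm{Im}(\overline{\Psi}\partial_j\Psi)$ cancel, the $e^{-2\sqrt{\mu}t}B$ contributions cancel, and the $e^{-4\sqrt{\mu}t}A$ contributions collapse to a single term, leaving $\dot{\mathcal{E}}(t) = -\sqrt{\mu}\,e^{-4\sqrt{\mu}t}\sum_j\langle p_j^2\rangle_t - \sqrt{\mu}\,\langle\xx\cdot\nabla f\rangle_t$. Applying \lem{weak-der-c} with $\Phi(\xx)=|\Psi(t,\xx)|^2\xx$ (smooth and $W^{1,1}$ by the regularity step) turns $-\langle\xx\cdot\nabla f\rangle_t$ into $\int \nabla\cdot(|\Psi(t,\xx)|^2\xx)\,f(\xx)\,\d\xx$, which is exactly the claimed identity. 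For the strong-convexity bound, apply $\mu$-strong convexity at the point $\xx$ with reference $\xx^*=0$ and $\gg=\nabla f(\xx)$ to get $\xx\cdot\nabla f(\xx)\ge f(\xx)+\tfrac{\mu}{2}\|\xx\|^2$, and at the minimizer to get $f(\xx)\ge\tfrac{\mu}{2}\|\xx\|^2$; taking expectations, $\dot{\mathcal{E}}(t)\le -\sqrt{\mu}\,e^{-4\sqrt{\mu}t}A(t) - \sqrt{\mu}\,\langle f\rangle_t - \tfrac{\mu^{3/2}}{2}C(t)$. It then remains to verify $\dot{\mathcal{E}}(t)+\tfrac{\sqrt{\mu}}{4}\mathcal{E}(t)\le 0$, i.e. $\tfrac{7\sqrt{\mu}}{8}e^{-4\sqrt{\mu}t}A + \tfrac{3\sqrt{\mu}}{4}\langle f\rangle_t + \tfrac{\mu^{3/2}}{4}C \ge \tfrac{\mu}{8}e^{-2\sqrt{\mu}t}B$; this follows from $|B|\le 2\sqrt{AC}$ (Cauchy--Schwarz on the symmetrized product and then on the sum over $j$), using $\langle f\rangle_t\ge\tfrac{\mu}{2}C$ once more, and AM--GM applied to $\tfrac{7\sqrt{\mu}}{8}e^{-4\sqrt{\mu}t}A$ and $\tfrac{5\mu^{3/2}}{8}C$, whose geometric mean is $\tfrac{\sqrt{35}}{4}\mu\sqrt{e^{-4\sqrt{\mu}t}AC}\ge\tfrac{\mu}{4}\sqrt{e^{-4\sqrt{\mu}t}AC}$ for every $\mu>0$.

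The main obstacle is making the Ehrenfest step legitimate for a merely Lipschitz potential: one must justify $\frac{\d}{\d t}\langle\hat A\rangle_t = \langle\partial_t\hat A\rangle_t + i\langle[H_{\mathrm{SC}},\hat A]\rangle_t$ and, above all, establish the identity that replaces $[p_j^2,f]$ by a single-derivative expression in $\nabla f$ — precisely the place where the $C^\infty$ and decay estimates of \append{regularity} and the integration-by-parts rule of \lem{weak-der-c} are indispensable (one never writes $[p_j^2,f]$ as an honest operator; instead one differentiates norms/expectations through the \sdg{} equation and pairs $H^1$ against $H^1$). Everything after that is bookkeeping of coefficients and elementary inequalities.
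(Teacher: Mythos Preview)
Your proposal is correct. The derivation of the identity for $\dot{\mathcal{E}}(t)$ follows the same Ehrenfest/commutator scheme as the paper, and your explicit handling of the problematic term---computing $\frac{\d}{\d t}\langle p_j^2\rangle_t$ by differentiating $\|\partial_j\Psi\|^2$ and pairing $H^1$ against $H^1$ rather than writing $[p_j^2,f]$ as an operator---is exactly the care the non-smooth setting demands, and is more explicit than the paper's write-up.

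Where you diverge from the paper is in the inequality $\dot{\mathcal{E}}\le-\tfrac{\sqrt{\mu}}{4}\mathcal{E}$. The paper keeps the operator $J_j$ intact: after obtaining $\dot{\mathcal{E}}\le -\sqrt{\mu}\bigl\langle e^{-4\sqrt{\mu}t}\sum_j p_j^2+f+\tfrac{\mu}{2}\sum_j x_j^2\bigr\rangle_t$, it splits the right-hand side as $\mathcal{J}_1+\mathcal{J}_2$ and uses the operator inequality $J_j^2=(e^{-2\sqrt{\mu}t}p_j+2\sqrt{\mu}x_j)^2\le 2(e^{-4\sqrt{\mu}t}p_j^2+4\mu x_j^2)$ to produce a $-\tfrac{\sqrt{\mu}}{16}\sum_j\langle J_j^2\rangle_t$ that exactly cancels the $J_j^2$ contribution coming from $\tfrac{\sqrt{\mu}}{4}\mathcal{E}$. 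You instead expand $J_j^2$ into the scalars $A,B,C$, bound the cross term via $|B|\le 2\sqrt{AC}$ (Cauchy--Schwarz), and close with AM--GM; this works, but forces you to invoke strong convexity a second time ($\langle f\rangle_t\ge\tfrac{\mu}{2}C$) to accumulate enough $C$ on the left. The paper's route uses strong convexity only once and stays at the operator level; yours is a more bare-hands scalar computation. Both are elementary and valid, and the constants come out the same.
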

\begin{proof}
    The smoothness of the solution at any $t$ follows from the standard regularity theory of Schr\"odinger equations, see~\thm{regularity}.
    Similar to in the proof of~\lem{e-dot-c}, the time derivative of the Lyapunov function can be expressed as follows:
    \begin{align}
        \partial_t \mathcal{E}(t) = \langle \partial_t\hat{O}_{\mathrm{SC}}\rangle_t + \langle i[H_{\mathrm{SC}}(t), \hat{O}_{\mathrm{SC}}]\rangle_t,
    \end{align}
    with
    \begin{align}
        \langle \partial_t \hat{O}_{\mathrm{SC}}(t)\rangle_t = \left\langle -2\sqrt{\mu}e^{-4\sqrt{\mu}t}\sum^d_{j=1}p^2_j - \mu e^{-2\sqrt{\mu}t}\sum^d_{j=1}\{x_j, p_j\} \right\rangle_t,
    \end{align}
    and 
    \begin{align}
        \langle i[H_{\mathrm{SC}}(t), \hat{O}_{\mathrm{SC}}(t)]\rangle_t 
        &= \frac{\mu}{2} e^{-2\sqrt{\mu}t}\sum^d_{j=1}\langle i[p^2_j,x^2_j]\rangle_t 
        + \frac{\sqrt{\mu}}{4} e^{-4\sqrt{\mu}t}\sum^d_{j=1}\langle i[p^2_j, \{x_j, p_j\}] \rangle_t\\
        &+ \sqrt{\mu} \int \nabla \cdot \left(|\Psi|^2 \xx\right) f(\xx) \d \xx.
    \end{align}
    For any $j = 1,\dots,d$, we have the following commutation relations~\cite[Appendix B.1, Lemma~2]{leng2023qhd}:
    \begin{align}
        i[p^2_j,x^2_j] = 2\{x_j,p_j\},\quad i[p^2_j, \{x_j, p_j\}] = 4p^2_j.  
    \end{align} 
    By adding the two parts of $\partial_t \mathcal{E}(t)$ together and using the above commutation relations, we obtain
    \begin{align}\label{eqn:strongly-eq-1}
        \partial_t \mathcal{E}(t) = \sqrt{\mu}\left(-e^{-4\sqrt{\mu}t}\sum^d_{j=1}\langle p^2_j \rangle_t + \int \nabla \cdot \left(|\Psi(t,\xx)|^2\xx\right) f(\xx)\d \xx\right),
    \end{align}
    which proves the first part of the lemma. 
    When $f$ is Lipschitz continuous and $\mu$-strongly convex, there is a subgradient $\gg$ that coincides with its weak derivative almost everywhere. By~\lem{weak-der-c}, we have
    \begin{align}\label{eqn:strongly-eq-2}
        \int \nabla \cdot \left(|\Psi|^2 \xx\right)f(\xx) \d \xx = - \langle \xx \cdot \gg(\xx) \rangle_t \le - \left\langle f + \frac{\mu}{2}\|\xx\|^2\right\rangle_t.
    \end{align}
    Therefore, by plugging~\eqn{strongly-eq-2} into~\eqn{strongly-eq-1}, we have
    \begin{align}
        \partial_t \mathcal{E}(t) \le -\sqrt{\mu} \left\langle e^{-4\sqrt{\mu}t}\sum^d_{j=1}p^2_j
        + f + \frac{\mu}{2} \sum^d_{j=1}x^2_j\right\rangle_t = \mathcal{J}_1(t) + \mathcal{J}_2(t),
    \end{align}
    where 
    \begin{align}
        \mathcal{J}_1(t) \coloneqq -\sqrt{\mu}\left\langle\frac{1}{8}e^{-4\sqrt{\mu}t}\sum^d_{j=1}p^2_j + \frac{\mu}{2}\sum^d_{j=1}x^2_j\right\rangle_t,\quad \mathcal{J}_2(t) \coloneqq - \sqrt{\mu}\left\langle f + \frac{7}{8}e^{-4\sqrt{\mu}t}\sum^d_{j=1}p^2_j\right\rangle_t.
    \end{align}
    Furthermore, by invoking the operator inequality $\frac{1}{2}(A+B)^2 \le A^2 + B^2$ (see~\eqn{J1}) and the positivity of $f$ and $p^2_j$ (see~\eqn{J2}), we obtain that
    \begin{align}
        \mathcal{J}_1(t) &= -\frac{\sqrt{\mu}}{8}\sum^d_{j=1}\left\langle e^{-4\sqrt{\mu}t}p^2_j + 4\mu x^2_j\right\rangle_t \le -\frac{\sqrt{\mu}}{16}\sum^d_{j=1}J^2_j,\label{eqn:J1}\\
        \mathcal{J}_2(t) &= -\frac{\sqrt{\mu}}{4}\mathcal{E}(t) + \frac{\sqrt{\mu}}{16}\left(\sum^d_{j=1}J^2_j\right) - \left(\frac{3\sqrt{\mu}}{4}\langle f\rangle_t + \frac{13\sqrt{\mu}}{16}e^{-4\sqrt{\mu}t}\sum^d_{j=1}\langle p^2_j\rangle_t \right)\\
        &\le -\frac{\sqrt{\mu}}{4}\mathcal{E}(t) + \frac{\sqrt{\mu}}{16}\left(\sum^d_{j=1}J^2_j\right).\label{eqn:J2}
    \end{align}
    As a consequence, we have
    \begin{align}
        \dot{\mathcal{E}}(t) \le \mathcal{J}_1(t) + \mathcal{J}_2(t) \le - \frac{\sqrt{\mu}}{4}\mathcal{E}(t),
    \end{align}
    which proves the second part of the lemma.
\end{proof}

\subsubsection{Case II: non-smooth convex optimization}
The main convergence result for \textbf{QHD-C} is summarized as follows.

\begin{theorem}\label{thm:qhd-convex}
    Suppose that $f(x)$ is locally integrable, Lipschitz continuous, convex, and potentially non-smooth. Let $\Psi(t,\xx)$ be the solution to \textbf{QHD-C}~\eqn{qhd-c} subject to a smooth initial state $\Psi_0(\xx)$, and $X_t$ be a random variable following the distribution $|\Psi(t)|^2$. Then, 
    \begin{align}
        \mathbb{E}[f(X_t)] - f(\xx^*) \le O(t^{-2}).
    \end{align}
\end{theorem}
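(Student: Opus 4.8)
The plan is to run a Lyapunov-function argument parallel to the proof of \thm{qhd-strongly-convex}, but with the energy functional and the time-dependent weights adapted to the merely convex setting. After translating so that $f(\xx^*)=0$ and $\xx^*=0$, the key preparatory step is to identify $H_{\mathrm C}(t)$ in \eqn{qhd-c} as the canonical quantization of the Bregman Hamiltonian \eqn{bregman-hamiltonian} for the weights
\begin{equation}
    \alpha_t=\log\tfrac{2}{t},\qquad \beta_t=\log\tfrac{t^2}{4},\qquad \gamma_t=\log(2t^2),
\end{equation}
equivalently $e^{\alpha_t}=2/t$, $e^{\beta_t}=t^2/4$, $e^{-\gamma_t}=1/(2t^2)$. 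One checks directly that these give $e^{\alpha_t-\gamma_t}=t^{-3}$ and $e^{\alpha_t+\beta_t+\gamma_t}=t^3$, that the associated Euler--Lagrange equation \eqn{second-order-ode} is Nesterov's ODE $\ddot X_t+\tfrac{3}{t}\dot X_t+\nabla f(X_t)=0$, and, crucially, that the ideal scaling conditions \eqn{ideal-scaling} hold here with equality: $\dot\gamma_t=\dot\beta_t=e^{\alpha_t}=2/t$. I then introduce the quantum energy functional
\begin{equation}
    \mathcal{E}(t)=\big\langle \hat O_{\mathrm C}(t)\big\rangle_t,\qquad
    \hat O_{\mathrm C}(t)\coloneqq e^{\beta_t}f+\tfrac12\sum_{j=1}^{d}J_j^2,\qquad J_j=e^{-\gamma_t}p_j+x_j,
\end{equation}
with $p_j,x_j$ the momentum and position operators; $\mathcal{E}(t)$ is the quantum analogue of the classical Bregman--Lagrangian Lyapunov function $e^{\beta_t}\big(f(X_t)-f(\xx^*)\big)+\tfrac12\|e^{-\alpha_t}\dot X_t+X_t-\xx^*\|^2$.

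The core of the argument is a lemma (analogous to \lem{lyapunov-estimate-sc}) asserting $\dot{\mathcal{E}}(t)\le0$. By the regularity theory for \sdg{} equations with bounded potentials (\thm{regularity}), $\Psi(t,\cdot)$ is smooth with enough decay that all the expectation values below are finite, so Ehrenfest's identity gives $\dot{\mathcal{E}}(t)=\langle\partial_t\hat O_{\mathrm C}(t)\rangle_t+\langle i[H_{\mathrm C}(t),\hat O_{\mathrm C}(t)]\rangle_t$. I expand the commutator using the relations $i[p_j^2,x_j^2]=2\{x_j,p_j\}$ and $i[p_j^2,\{x_j,p_j\}]=4p_j^2$ from \cite{leng2023qhd}, together with $[p_j,f]=-ig_j$ where $\gg$ is the weak subgradient of $f$ (\cor{lipschitz-subgradient}). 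Two cancellations, both forced by the choice of weights above, drive the computation: first, the coefficient $\tfrac12$ in front of $\sum_jJ_j^2$ makes the terms proportional to $\langle\{p_j,g_j\}\rangle_t$ --- which would otherwise involve a second weak derivative of $f$ --- cancel identically, already at the operator level, between the commutator of the kinetic part of $H_{\mathrm C}$ with $e^{\beta_t}f$ and the commutator of $t^3f$ with $\tfrac12\sum_jJ_j^2$, exactly as in the \textbf{QHD-SC} case; second, the purely kinetic terms $\langle p_j^2\rangle_t$ and $\langle\{x_j,p_j\}\rangle_t$ arising from the commutator of the kinetic part of $H_{\mathrm C}$ with $\tfrac12\sum_jJ_j^2$ cancel against $\langle\partial_t(\tfrac12\sum_jJ_j^2)\rangle_t$, because $\dot\gamma_t=e^{\alpha_t}$ (this is the quantum counterpart of the classical identity $\dot W_t=-e^{\alpha_t+\beta_t}\nabla f(X_t)$ for $W_t=e^{-\alpha_t}\dot X_t+X_t-\xx^*$). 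What remains is
\begin{equation}
    \dot{\mathcal{E}}(t)=\dot\beta_t\,e^{\beta_t}\langle f\rangle_t+e^{\alpha_t+\beta_t}\int \nabla\cdot\big(|\Psi(t,\xx)|^2\,\xx\big)\,f(\xx)\,\d \xx .
\end{equation}
Applying the integration-by-parts formula \lem{weak-der-c} with $\Phi=|\Psi(t,\cdot)|^2\,\xx$ rewrites the integral as $-\langle\xx\cdot\gg(\xx)\rangle_t$; convexity of $f$ together with $f(\xx^*)=0$ gives the pointwise bound $\gg(\xx)\trans\xx\ge f(\xx)-f(\xx^*)=f(\xx)$, hence $-\langle\xx\cdot\gg\rangle_t\le-\langle f\rangle_t$. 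Since $\dot\beta_t\,e^{\beta_t}=e^{\alpha_t+\beta_t}$ (because $\dot\beta_t=e^{\alpha_t}$) and $\langle f\rangle_t\ge0$, this yields $\dot{\mathcal{E}}(t)\le0$.

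With monotonicity established, $\mathcal{E}(t)\le\mathcal{E}(T_0)$ for all $t\ge T_0$; and since $\tfrac12\sum_jJ_j^2$ is a nonnegative operator, $\mathcal{E}(t)\ge e^{\beta_t}\langle f\rangle_t=\tfrac{t^2}{4}\langle f\rangle_t$. Combining, $\mathbb{E}[f(X_t)]-f(\xx^*)=\langle f\rangle_t\le 4\,\mathcal{E}(T_0)/t^2=O(t^{-2})$, which is the claim.

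I expect the main obstacle to be the analytic justification of the Ehrenfest computation and of the integration by parts when $f$ is only Lipschitz rather than $C^1$: one must control both the decay and the growth of the derivatives of $\Psi(t,\cdot)$ so that $\langle p_j^2\rangle_t$, $\langle x_j^2\rangle_t$, $\langle\{x_j,p_j\}\rangle_t$ and $\langle\xx\cdot\gg(\xx)\rangle_t$ are finite and the boundary terms in \lem{weak-der-c} vanish --- this is precisely where the $C^\infty$-regularity of the \sdg{} flow (\thm{regularity}, \append{regularity}) and the weak-derivative calculus of \sec{weak-derivative} take the place of the classical argument's use of $\nabla f(X_t)$ along a trajectory. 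A secondary but essential point is pinning down weights $\alpha_t,\beta_t,\gamma_t$ that simultaneously reproduce $H_{\mathrm C}(t)$ and satisfy the ideal scaling equalities $\dot\gamma_t=\dot\beta_t=e^{\alpha_t}$: without $\dot\gamma_t=e^{\alpha_t}$ the kinetic terms in $\dot{\mathcal{E}}$ do not cancel, and without $\dot\beta_t=e^{\alpha_t}$ the $O(t^{-2})$ rate does not emerge.
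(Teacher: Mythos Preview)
Your proposal is correct and takes essentially the same approach as the paper: the Lyapunov function $\hat O_{\mathrm C}(t)=e^{\beta_t}f+\tfrac12\sum_j J_j^2$ with your weights is exactly $1/4$ of the paper's $\hat O_{\mathrm C}(t)=t^2 f+\tfrac12\sum_j(t^{-2}p_j+2x_j)^2$, and the derivative computation, the cancellations, the use of \lem{weak-der-c} and \cor{lipschitz-subgradient}, and the final bound all match \lem{e-dot-c} line for line. The only difference is cosmetic---you route the construction through the Bregman parameters $\alpha_t,\beta_t,\gamma_t$ and the ideal scaling conditions, whereas the paper writes everything directly in $t$.
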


\begin{proof}
Let $\Psi(t,\xx)$ be the solution to the differential equation of \textbf{QHD-C} for $t \ge T_0 > 0$ subject to an initial state $\Psi(0,\xx) = \Psi_0(\xx)$.
For a self-adjoint operator $\hat{O}$, we define the expected value of $\hat{O}$ as a function of time $t$:
\begin{align}
    \langle \hat{O} \rangle_t \coloneqq \int \overline{\Psi(t,\xx)} \hat{O}\Psi(t,x)\d \xx = \braket{\Psi(t)}{\hat{O}\Big|\Psi(t)}. 
\end{align}
With this notation, we have $\mathbb{E}[f(X_t)] = \langle f\rangle_t$.

Let $f$ be a convex objective function. Without loss of generality, we assume $f(\xx^*)=0$ and $\xx^* = 0$. We consider the following Lyapunov function,
\begin{align}\label{eqn:lyapunov-f-convex}
    \mathcal{E}(t) = \left\langle \hat{O}_\mathrm{C}(t) \right\rangle_t, \quad \hat{O}_\mathrm{C}(t) \coloneqq  t^2 f + \frac{1}{2}\sum^d_{j=1}\left(\frac{1}{t^4}p^2_j + \frac{2}{t^2}\{p_j,x_j\} + 4x^2_j\right).
\end{align}
Here, the operators $p_j$ and $x_j$ denote the momentum and position operators for the $j$-th coordinate. They should be interpreted as operators acting on a smooth test function $\varphi(\xx)\colon \R^d \to \R$ as follows:
\begin{align}\label{eqn:position-momentum-ops}
    (p_j \varphi)(\xx) \coloneqq -i\partial_{x_j}\varphi(\xx),\quad (x_j \varphi)(\xx) \coloneqq x_j \varphi(\xx).
\end{align}
$\{A,B\} = AB+BA$ represents the anti-commutator between operators $A$ and $B$. 

For each $j = 1,\dots, d$, we observe that 
\begin{align}
    t^{-4}p^2_j + 2t^{-2}\{x_j, p_j\} + 4 x^2_j = \left(t^{-2}p_j + 2x_j\right)^2 \ge 0,
\end{align}
which immediately implies that $\hat{O}_\mathrm{C}(t) - t^2f$ is a positive operator, i.e., 
\begin{align}
    t^2\langle f \rangle_t \le \mathcal{E}(t).
\end{align}
In~\lem{e-dot-c}, we show that $\mathcal{E}(t)$ is a non-increasing function in time for all $t \ge T_0$. As a result,
\begin{align}
    \langle f \rangle_t \le \frac{\mathcal{E}(t)}{t^2} \le \frac{\mathcal{E}(T_0)}{t^2},
\end{align}
which proves~\thm{qhd-convex}.
\end{proof}

\begin{lemma}\label{lem:e-dot-c}
Let $f$ be a locally integrable and Lipschitz continuous function. For $t \ge T_0$, we denote $\Psi(t,\xx)$ as the solution to \textbf{QHD-C} with an initial state $\Psi_0(\xx) \in  C^\infty(\R^d)\cap L^2(\R^d)$. Let the function $\mathcal{E}(t)$ be the same as in~\eqn{lyapunov-f-convex}. Then, for any $t \ge T_0$, $\Psi(t,\xx)$ is a smooth function in $\xx$ and
\begin{align}
    \dot{\mathcal{E}}(t) = 2t \left[\langle f \rangle_t + \int \nabla \cdot \left(|\Psi(t,\xx)|^2 \xx\right) f(\xx)\d \xx \right].
\end{align}
Moreover, if $f$ is a convex function with $\xx^*=0$ and $f(\xx^*) = 0$, we have $\dot{\mathcal{E}}(t)  \le 0$
for all $t \ge T_0$.
\end{lemma}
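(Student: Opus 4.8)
The plan is to mimic the three‑move structure already used for \lem{lyapunov-estimate-sc}: record the regularity of $\Psi$, differentiate $\mathcal{E}(t)$ via an Ehrenfest‑type identity and reduce the commutator to a single $f$‑dependent term, and then use convexity to sign it. First I would invoke the regularity theory for \sdg{} equations with bounded potential from \append{regularity} (\thm{regularity}): for every $t\ge T_0$, $\Psi(t,\cdot)\in C^\infty(\R^d)$, and $\Psi$ together with all its derivatives decays fast enough that $\mathcal{E}(t)$ is finite and differentiable under the integral sign, $t\mapsto\Psi(t)$ is differentiable into $L^2$, and $\Psi(t)$ lies in the domains of the operators appearing below. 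With $H_{\mathrm C}(t)=\frac{1}{2t^3}\sum_k p_k^2 + t^3 f$ and $\hat O_{\mathrm C}(t)$ as in \eqn{lyapunov-f-convex}, self‑adjointness of $H_{\mathrm C}(t)$ then gives $\dot{\mathcal{E}}(t)=\langle\partial_t\hat O_{\mathrm C}(t)\rangle_t+\langle i[H_{\mathrm C}(t),\hat O_{\mathrm C}(t)]\rangle_t$.

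Next I would carry out the bookkeeping. Differentiating the time‑dependent coefficients yields $\langle\partial_t\hat O_{\mathrm C}(t)\rangle_t = 2t\langle f\rangle_t - \frac{2}{t^5}\sum_j\langle p_j^2\rangle_t - \frac{2}{t^3}\sum_j\langle\{p_j,x_j\}\rangle_t$. For the commutator, the canonical relations $i[p_j^2,x_j^2]=2\{x_j,p_j\}$ and $i[p_j^2,\{x_j,p_j\}]=4p_j^2$ from \cite{leng2023qhd} make the purely kinetic/position part of $i[H_{\mathrm C},\hat O_{\mathrm C}]$ equal to $\frac{2}{t^5}\sum_j p_j^2+\frac{2}{t^3}\sum_j\{p_j,x_j\}$, cancelling the last two terms above; the two cross terms involving $f$ and a kinetic factor, $i[\tfrac{1}{2t^3}\sum_k p_k^2,\,t^2 f]$ and $i[t^3 f,\,\tfrac{1}{2t^4}\sum_j p_j^2]$, cancel each other; and $f$ commutes with $x_j^2$. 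The only surviving $f$‑dependent contribution is $t\sum_j\langle i[f,\{p_j,x_j\}]\rangle_t$. Since $\Psi$ is smooth and $f$ is Lipschitz, the almost-everywhere product rule gives $[f,p_j]\Psi=i(\partial_j f)\Psi$, hence $i[f,\{p_j,x_j\}]\Psi=-2x_j(\partial_j f)\Psi$, so this term equals $-2t\sum_j\int x_j(\partial_j f)\,|\Psi(t,\xx)|^2\,\d\xx$. Applying \lem{weak-der-c} to $\Phi(\xx)=|\Psi(t,\xx)|^2\xx$ (each $\Phi_j\in W^{1,1}(\R^d)\cap C^\infty(\R^d)$ by the decay estimates) rewrites this as $2t\int(\nabla\cdot(|\Psi|^2\xx))\,f\,\d\xx$. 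Collecting everything gives $\dot{\mathcal{E}}(t)=2t\big[\langle f\rangle_t+\int(\nabla\cdot(|\Psi(t,\xx)|^2\xx))\,f(\xx)\,\d\xx\big]$, proving the first claim; equivalently, by \lem{weak-der-c} once more, $\dot{\mathcal{E}}(t)=2t\big(\langle f\rangle_t-\langle\xx\cdot\gg(\xx)\rangle_t\big)$ with $\gg$ the weak gradient.

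For the monotonicity, since $f$ is locally Lipschitz its weak gradient coincides a.e.\ with a subgradient (\cor{lipschitz-subgradient}). Evaluating the convexity inequality $f(\yy)\ge f(\xx)+\gg(\xx)\trans(\yy-\xx)$ at $\yy=\xx^*=0$ with $f(\xx^*)=0$ gives $\gg(\xx)\trans\xx\ge f(\xx)$ for a.e.\ $\xx$, hence $\langle\xx\cdot\gg\rangle_t\ge\langle f\rangle_t$ and $\dot{\mathcal{E}}(t)=2t\big(\langle f\rangle_t-\langle\xx\cdot\gg\rangle_t\big)\le 0$ for all $t\ge T_0$.

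The hard part will be the bookkeeping step with a non-smooth $f$: the commutators $i[p_j^2,f]$ and $i[f,\{p_j,x_j\}]$ cannot be pushed through a classical $\nabla f$, and the Ehrenfest identity requires $t\mapsto\Psi(t)$ differentiable with $\Psi(t)$ in the relevant operator domains. The way around it is to never let a derivative land on $f$ — keep each inner product in the form $\int\overline{(\cdots\Psi)}\,f\,(\cdots\Psi)\,\d\xx$, integrate by parts in $\xx$ so that $f$ multiplies a smooth integrable density, and only then invoke \lem{weak-der-c} — which is underpinned by the $C^\infty$ and decay control on $\Psi$ from \append{regularity}, the same control that licenses differentiating $\mathcal{E}(t)$ under the integral sign.
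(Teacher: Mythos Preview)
Your proposal is correct and takes essentially the same approach as the paper: regularity from \thm{regularity}, the Ehrenfest identity $\dot{\mathcal{E}}=\langle\partial_t\hat O_{\mathrm C}\rangle_t+\langle i[H_{\mathrm C},\hat O_{\mathrm C}]\rangle_t$, the same commutation relations $i[p_j^2,x_j^2]=2\{x_j,p_j\}$ and $i[p_j^2,\{x_j,p_j\}]=4p_j^2$ to cancel the kinetic/position terms, and then \cor{lipschitz-subgradient} plus the subgradient inequality at $\yy=\xx^*=0$ for the sign. The only cosmetic difference is the order of arriving at the divergence form: the paper writes the surviving $f$-term directly as $2t\int(\nabla\cdot(|\Psi|^2\xx))f\,\d\xx$ (exactly the ``never let a derivative land on $f$'' strategy you describe in your final paragraph) and only afterwards invokes \lem{weak-der-c} to pass to $-\langle\xx\cdot\gg\rangle_t$, whereas your execution goes through $-2t\langle\xx\cdot\partial f\rangle$ via the a.e.\ product rule first and then applies \lem{weak-der-c}.
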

\begin{proof}
    The smoothness of the solution at any $t$ follows from the standard regularity theory of Schr\"odinger equations, see~\thm{regularity}.
    Recall that we have $\mathcal{E}(t) = \braket{ \Psi(t)}{\hat{O}_\mathrm{C}\Big|\Psi(t)}$, with $\hat{O}_\mathrm{C} = t^2 f + \frac{1}{2}\sum^d_{j=1}\left(\frac{1}{t^4}p^2_j + \frac{2}{t^2}\{p_j,x_j\} + 4x^2_j \right)$.
    By the distribution law of derivative, we have
    \begin{align}
        \partial_t \mathcal{E}(t) &=\braket{\partial_t \Psi(t)}{\hat{O}_\mathrm{C}\Big|\Psi(t)} + \braket{\Psi(t)}{\partial_t \hat{O}_\mathrm{C}\Big|\Psi(t)} +\braket{\Psi(t)}{\hat{O}_\mathrm{C}\Big|\partial_t\Psi(t)}\\
        &= \langle \partial_t\hat{O}_\mathrm{C}(t)\rangle_t + \langle i[H_{\mathrm{C}}(t), \hat{O}_\mathrm{C}(t)]\rangle_t,
    \end{align}
    where the last step follow from the equation $\partial_t \Psi = -iH_{\mathrm{C}}(t)\Psi$. Here, $[\cdot,\cdot]$ is the commutator of two operators: $[A, B] = AB - BA$.
    Further calculation shows
    \begin{align}\label{eqn:dEdt_part1}
        \langle \partial_t\hat{O}_\mathrm{C}(t)\rangle_t = \left\langle 2tf -2t^{-3}\sum^d_{j=1}\{x_j,p_j\} - 2t^{-5} \sum^d_{j=1}p^2_j\right\rangle_t,
    \end{align}
    and 
    \begin{align}\label{eqn:dEdt_part2}
        \langle i[H_{\mathrm{C}}(t), \hat{O}_\mathrm{C}(t)]\rangle_t = \left\langle t^{-3}\sum^d_{j=1}i[p^2_j,x^2_j] +t^{-5}\sum^d_{j=1}i[p^2_j,\{x_j,p_j\}/2]\right\rangle_t + 2t\int \nabla \cdot \left(|\Psi(t)|^2\xx\right)f(\xx)\d \xx.
    \end{align}
    Note that the last term in~\eqn{dEdt_part2} is equivalent to $-2t \langle \nabla f(\xx) \cdot \xx\rangle_t$ if $f$ is continuously differentiable. However, since $f$ is non-smooth, we need to move the derivatives to the ``smooth part'' of the integral in order to make the expression well-defined. 
    For any $j = 1,\dots,d$, we have the following commutation relations~\cite[Appendix B.1, Lemma~2]{leng2023qhd}:
    \begin{align}
        i[p^2_j,x^2_j] = 2\{x_j,p_j\},\quad i[p^2_j, \{x_j, p_j\}] = 4p^2_j.  
    \end{align} 
    Therefore, by adding up~\eqn{dEdt_part1} and \eqn{dEdt_part2}, we end up with
    \begin{align}\label{eqn:convex-eq-2}
        \partial_t \mathcal{E}(t) = 2t\langle f\rangle_t + 2t\int \nabla \cdot \left(|\Psi(t)|^2\xx\right)f(\xx)\d \xx,
    \end{align}
    which proves the first part of the lemma.
    When $f$ is Lipschitz continuous, \cor{lipschitz-subgradient} implies that there exists a subgradient fo $f$ (denoted by $\gg$) that equals the weak derivative of $f$ almost everywhere.
    By \lem{weak-der-c}, we have
    \begin{align}\label{eqn:convex-eq-1}
        \int \nabla \cdot \left(|\Psi(t,\xx)|^2\xx\right)f(\xx)\d \xx = -\int |\Psi(t,\xx)|^2 \xx \cdot \gg(\xx) \d \xx = -\langle \xx \cdot \gg \rangle_t.
    \end{align}
    Plugging \eqn{convex-eq-1} into~\eqn{convex-eq-2}, we have
    \begin{align}
        \dot{\mathcal{E}}(t) = 2t\langle f - \xx \cdot g\rangle_t \le 0, 
    \end{align}
    where the last inequality follows from the fact that $\gg$ is a subgradient so $f + \gg \cdot (\xx^* - \xx) \le f(\xx^*) = 0$.
\end{proof}

\subsubsection{Case III: non-smooth non-convex optimization}
In this section, we prove the global convergence of QHD for non-convex problems.
For simplicity, we impose box constraints on the optimization problem
\begin{equation}
    \min_{\xx \in \Omega} f(\xx),
\end{equation}
where $\Omega \coloneqq [-1,1]^d$.
We note that this assumption is made for technical reasons and does not materially affect the behavior of the quantum algorithm.
Moreover, the bounded search space can be further mapped into $\Omega$ through appropriate variable transformations.
Since the domain is no longer $\mathbb{R}^d$, we make the QHD wave function $\Psi(t)$ in \eqn{qhd-nc} satisfy a Dirichlet boundary condition, namely
\begin{equation}
    \Psi(t,\xx) = 0, \quad \text{for $\xx \in \partial \Omega$}.
\end{equation}

Define $\ket{\phi}$ as the ground state of $-\Delta$, where $\Delta$ is the Laplacian operator.
Its wave function can be written analytically:
\begin{equation}
    \phi(\xx) = \phi(x_1,\dots,x_d) = \prod_{1 \leq j \leq d} \cos (\frac{\pi}{2} x_j).
\end{equation}
Arguably, $\ket{\phi}$ is easy to prepare since it is a product state with simple components.
Now we are ready to state the main convergence result for \textbf{QHD-NC}.

\begin{theorem}\label{thm:qhd-non-convex}
    For any $\delta > 0$, there exist $\alpha = \alpha(f,\delta) > 0$ and $T_0 = T_0(f,\delta) > 0$ for \textbf{QHD-NC}~\eqn{qhd-nc} with the initial state being $\ket{\phi}$, such that the final solution $\ket{\psi} \coloneqq \Psi(T)$ satisfies $\braket{\psi}{f|\psi} \le \min f(\xx) + \delta$, where $T = T(f,\delta)$. In other words, let $X_T$ be a random variable following the distribution $|\Psi(T)|^2$, we have
    \begin{align}
        \mathbb{E}[f(X_T)] \le \min f(\xx) + \delta.
    \end{align}
\end{theorem}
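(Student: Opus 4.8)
The plan is to establish this via an adiabatic-type argument combined with a spectral analysis of the instantaneous Hamiltonian $H_{\mathrm{NC}}(t)$. First I would rewrite $H_{\mathrm{NC}}(t)$, after dividing by $\alpha t^{1/3}$, in the form $\frac{1}{\alpha^2 t^{2/3}}\left(-\frac{1}{2}\Delta\right) + f(x)$, so that as $t$ grows the kinetic term is suppressed relative to the potential; equivalently, setting $h(t) = 1/(\alpha^2 t^{2/3})$ we are looking at a semiclassical Schr\"odinger operator $h(t)(-\tfrac12\Delta) + f$ with effective Planck parameter $h(t) \to 0$. The key spectral input is that for small $h$, the ground state energy $E_0(h)$ of $h(-\tfrac12\Delta) + f$ on $\Omega$ with Dirichlet boundary conditions satisfies $E_0(h) \to \min_\Omega f$ as $h \to 0$: the upper bound comes from a trial function (a suitably localized bump near a global minimizer $x^*$, rescaled at width $\sim h^{1/2}$, whose kinetic energy is $O(1)$ times $h/h = O(1)$... more carefully, width $w$ gives kinetic cost $O(h/w^2)$ and potential cost controlled by the local modulus of continuity of $f$ near $x^*$, so choosing $w \to 0$ slowly in $h$ balances these), and the lower bound is immediate since $h(-\tfrac12\Delta)$ is a positive operator and $f \ge \min f$. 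This is where I would invoke Weyl's law / the spectral counting estimates cited (Ivrii, Rozenblum) to control not just $E_0(h)$ but the low-lying spectrum and spectral gaps, which will be needed for the adiabatic step.

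Next I would run the energy argument: let $\mathcal{E}(t) = \langle \Psi(t)| H_{\mathrm{NC}}(t) | \Psi(t)\rangle$ be the instantaneous expected energy. By the Schr\"odinger equation the time derivative of $\mathcal{E}(t)$ equals $\langle \partial_t H_{\mathrm{NC}}(t)\rangle_t$, since the commutator term vanishes for the energy observable. Now $\partial_t H_{\mathrm{NC}}(t) = -\frac{1}{3\alpha t^{4/3}}(-\tfrac12\Delta) + \frac{\alpha}{3 t^{2/3}} f(x)$; the second term is manifestly positive and bounded, while the first term is negative. Controlling $\langle -\Delta\rangle_t$ — i.e. the kinetic energy remains bounded along the flow — is the crux. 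I expect to show $\langle (-\tfrac12\Delta)\rangle_t \le C \alpha^2 t^{2/3}\,\mathcal{E}(t)$ or similar from $\mathcal{E}(t) \ge \frac{1}{\alpha t^{1/3}}\langle -\tfrac12\Delta\rangle_t + \alpha t^{1/3}\min f$, so that $\dot{\mathcal{E}}(t)$ is bounded in a way that, after dividing by $\alpha t^{1/3}$ to track the normalized energy $\widetilde{\mathcal{E}}(t) = \mathcal{E}(t)/(\alpha t^{1/3})$, yields $\widetilde{\mathcal{E}}(t) \to \min f$ in finite time for suitable $\alpha, T_0$. Concretely, from $\widetilde{\mathcal{E}}(t) = \langle f\rangle_t + h(t)\langle -\tfrac12\Delta\rangle_t$ and a differential inequality for $\widetilde{\mathcal{E}}$, I would show $\langle f\rangle_t = \widetilde{\mathcal{E}}(t) - h(t)\langle-\tfrac12\Delta\rangle_t \le \min f + \delta$ once $t$ is large enough, choosing $\alpha$ large enough and $T_0$ appropriately so that the initial energy $\widetilde{\mathcal{E}}(T_0)$ with $\Psi(T_0) = \phi$ (for which $\langle -\tfrac12\Delta\rangle = \frac{\pi^2 d}{8}$ and $\langle f\rangle_{\phi} = \int |\phi|^2 f \le \max f$) is finite, and the decay drives it below $\min f + \delta$.

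The main obstacle is controlling the kinetic energy $\langle -\Delta\rangle_t$ uniformly along the evolution — an a priori Sobolev/regularity estimate for the solution of this time-dependent Schr\"odinger equation with merely Lipschitz (non-smooth) potential. This is exactly the kind of estimate the excerpt flags as needed (the $C^\infty$ regularity / Sobolev growth estimates in the regularity appendix and the new Sobolev bound behind \prop{spectral-method}). I would either (i) derive a closed differential inequality coupling $\mathcal{E}(t)$ and $\langle -\Delta\rangle_t$ using $\frac{d}{dt}\langle -\Delta\rangle_t = \langle i[H_{\mathrm{NC}}(t), -\Delta]\rangle_t = \alpha t^{1/3}\langle i[f, -\Delta]\rangle_t$ and bound the right side via the Lipschitz constant of $f$ and $\langle -\Delta\rangle_t^{1/2}$, giving a Gr\"onwall-type control; or (ii) bypass pointwise kinetic control by instead showing directly that $\widetilde{\mathcal{E}}(t)$ is non-increasing up to a small correction using the positivity structure of $\partial_t H_{\mathrm{NC}}$ after the rescaling. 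A secondary subtlety is that "finite time" convergence (rather than $t \to \infty$) requires the decay rate of $h(t) = \alpha^{-2} t^{-2/3}$ together with the growth of the overall scale to be fast enough; I would verify the integrals $\int_{T_0}^{T} (\cdot)\,dt$ converge to the needed value at a finite $T = T(f,\delta)$, and record the explicit dependence of $\alpha, T_0, T$ on $f$ (through $\min f$, $\max f$ on $\Omega$, its Lipschitz constant, and the local behavior near $x^*$) and on $\delta$.
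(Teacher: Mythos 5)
Your spectral analysis is on the right track: rewriting $H_{\mathrm{NC}}(t)/(\alpha t^{1/3})$ as a semiclassical operator $h(t)(-\tfrac12\Delta)+f$ with $h(t)\to 0$, and showing via Weyl's law (or a trial-function upper bound) that its ground energy tends to $\min f$, is exactly the spectral input the paper uses (\lem{weyl}). However, the dynamical part of your argument has a genuine gap. The normalized energy $\widetilde{\mathcal{E}}(t)=\mathcal{E}(t)/(\alpha t^{1/3})=\langle f\rangle_t+h(t)\langle-\tfrac12\Delta\rangle_t$ is indeed non-increasing — a direct computation gives $\frac{\d}{\d t}\widetilde{\mathcal{E}}(t)=-\frac{2}{3\alpha^2 t^{5/3}}\langle-\tfrac12\Delta\rangle_t\le 0$, with no correction term — but monotonicity only yields $\langle f\rangle_t\le\widetilde{\mathcal{E}}(t)\le\widetilde{\mathcal{E}}(T_0)$, and $\widetilde{\mathcal{E}}(T_0)\ge\braket{\phi}{f|\phi}$ is an average of $f$ over the spread-out initial density $|\phi|^2$, which for a general non-convex $f$ is nowhere near $\min f+\delta$. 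There is no mechanism in a unitary evolution that forces the energy to decay all the way down to the ground energy: the decrease $\int_{T_0}^{T}\frac{2}{3\alpha^2 t^{-5/3}}\langle-\tfrac12\Delta\rangle_t\,\d t$ cannot be lower-bounded (the kinetic term may stay bounded while $\int t^{-5/3}\d t$ converges), so no choice of $\alpha$, $T_0$, $T$ closes the argument. Your fallback (i), a Gr\"onwall bound on $\langle-\Delta\rangle_t$, only controls the energy from above and suffers the same problem.

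The missing idea is adiabaticity, which you mention in passing but never deploy. The paper's proof works with $H(t)=-\Delta+tf$ evolved slowly, $i\epsilon\,\d_t\ket{\psi^\epsilon}=H(t)\ket{\psi^\epsilon}$, and invokes the quantum adiabatic theorem for unbounded Hamiltonians (\thm{adiabatic}, requiring the spectral-gap input from \lem{sdg-spec}) to guarantee that the state launched from $\ket{\phi}$ — the ground state of $H(0)=-\Delta$ — remains within $C\epsilon(1+t)$ of the instantaneous ground state $\ket{\phi_0(t)}$. Only then does the Weyl's-law estimate $\braket{\phi_0(t_{\rm max})}{f|\phi_0(t_{\rm max})}\le\delta/10$ transfer to the actual state, after which a time-dilation change of variables identifies the $\epsilon$-slow dynamics with \textbf{QHD-NC} and fixes $\alpha$, $T_0$, $T$ in terms of $\epsilon$ and $t_{\rm max}$. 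To repair your proof you would need to replace the Lyapunov step with this adiabatic tracking argument (and verify the uniform spectral gap over the compact time interval that it requires); the energy identity you derived then serves only as the final link $\langle f\rangle\le\widetilde{\mathcal{E}}$, not as the engine of convergence.
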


The proof of \thm{qhd-non-convex} heavily relies on the analysis of spectra of \sdg\ operators $H = - \Delta + V$ where $V\colon \Omega\to\mathbb{R}$ is a continuous potential function.
Refer to \sec{sdg-spec} for important properties of \sdg\ operators.
Let us introduce necessary tools before proceeding to the formal proof.

\paragraph{Adiabatic evolution of unbounded quantum Hamiltonian.}
Consider the dynamics described by the \sdg\ equation
\begin{equation} \label{eqn:adiabatic}
    i \epsilon \frac{\d}{\d t}\ket{\psi^\epsilon(t)} = H(t)\ket{\psi^\epsilon(t)},
\end{equation}
subject to an initial state $\ket{\psi^\epsilon(0)}$.
Denote by $U^\epsilon(t)$ the propagator of the dynamics described by \eqn{adiabatic}, so the solution at time $t$ is given by
\begin{align}
    \ket{\psi^\epsilon(t)} = U^\epsilon(t) \ket{\psi^\epsilon(0)}.
\end{align}
The parameter $\epsilon > 0$ controls the time scale on which the quantum Hamiltonian $H(t)$ varies. For a small $\epsilon$, the system evolves slowly and the dynamics are relatively simple: if the system begins with an eigenstate of $H(0)$, it remains close to an eigenstate of $H(t)$. This process is called \textit{adiabatic quantum evolution}.

Formally, let $P(t)$ be the rank-1 projector onto the ground state of $H(t)$.
The quantum adiabatic theorem states that the propagator $U^\epsilon(t)$ approximately maps the ground state of $H(0)$ to that of $H(t)$ for sufficiently small $\epsilon$. 
Various formulations of the quantum adiabatic theorem exist in the literature, while most of them assume the Hamiltonian is a bounded linear operator. In QHD, however, the Hamiltonian is an unbounded operator defined on $\Omega$. Here, we introduce a quantum adiabatic theorem for unbounded Hamiltonian, which allows us to have a neat analysis without discretizing the unbounded operator in QHD.

\begin{theorem}[{\cite[Sec. 1.1]{teufel}}] \label{thm:adiabatic}
    Let $\dot{H}(t)$ and $\ddot{H}(t)$ be bounded.\footnote{i.e., they have finite \emph{graph norm}, which can be bounded by their usual $2$-norm; See~\cite{teufel}.}
    Assume that the spectral gap of $H(t)$ is always bounded above some $c > 0$.
    Then, there exists a constant $C$ such that
    \begin{equation}
        \norm{(I - P(t)) U^\epsilon(t) P(0)} \leq C \epsilon (1 + t).
    \end{equation}
\end{theorem}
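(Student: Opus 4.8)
The plan is to run the classical intertwining-plus-integration-by-parts proof of the adiabatic theorem, taking care that every manipulation is legitimate for the unbounded $H(t)$. Write $E(t)$ for the ground-state energy, so that $H(t)P(t) = E(t)P(t)$ and hence $[H(t),P(t)] = 0$. Differentiating $P(t)^2 = P(t)$ gives $\dot P = \dot P P + P\dot P$, so $P\dot P P = (I-P)\dot P(I-P) = 0$; thus $\dot P$ is purely off-diagonal with respect to the splitting $I = P + (I-P)$. Setting $B(t) := [\dot P(t),P(t)]$, a direct computation yields $\dot P(t) = [B(t),P(t)]$, which identifies $B$ as the generator of exact parallel transport and which will let me build a propagator that intertwines the projectors exactly.

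Next I would introduce the adiabatic Hamiltonian $H_A(t) := H(t) - i\epsilon B(t)$ with propagator $U_A^\epsilon$ solving $i\epsilon\,\partial_t U_A^\epsilon = H_A U_A^\epsilon$. The identity $\dot P = [B,P]$ is exactly the condition guaranteeing the intertwining $U_A^\epsilon(t)P(0) = P(t)U_A^\epsilon(t)$, so that $(I-P(t))U_A^\epsilon(t)P(0) = 0$ and the entire deviation comes from the mismatch between $U^\epsilon$ and $U_A^\epsilon$. By Duhamel's formula, using $H - H_A = i\epsilon B$ so that the explicit $1/\epsilon$ cancels,
\[
    U^\epsilon(t) - U_A^\epsilon(t) = U^\epsilon(t)\int_0^t U^\epsilon(s)^* B(s)\, U_A^\epsilon(s)\, \d s .
\]
Since $U^\epsilon(t)$ and $U_A^\epsilon(s)$ are unitary and $(I-P(t))U_A^\epsilon(t)P(0)=0$, it remains to show that this $O(1)$-looking integral is in fact $O(\epsilon(1+t))$.

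The gain of a factor $\epsilon$ is where the spectral gap $c$ enters. Because $B$ is off-diagonal, the equation $[H(s),X(s)] = B(s)$ has a unique off-diagonal bounded solution $X(s)$, obtained by inverting $\mathrm{ad}_{H(s)}$ on off-diagonal operators; the gap gives $\norm{X(s)} \lesssim \norm{B(s)}/c$, and combined with the resolvent formula $\dot P = -\tfrac{1}{2\pi i}\oint_\Gamma R(z)\dot H R(z)\,\d z$ (so $\norm{\dot P}\lesssim \norm{\dot H}/c$) one obtains $\norm{X}$ and, after one more differentiation, $\norm{\dot X}$ bounded uniformly in terms of $\norm{\dot H}$, $\norm{\ddot H}$ and $c$. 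Substituting $B = [H,X]$ into the integrand and using $U^\epsilon(s)^* H(s) = -i\epsilon\,\partial_s U^\epsilon(s)^*$ together with $H(s)U_A^\epsilon(s) = i\epsilon\,\partial_s U_A^\epsilon(s) + i\epsilon B(s)U_A^\epsilon(s)$, I can rewrite the integrand as a total $s$-derivative plus lower-order terms:
\[
    U^\epsilon(s)^* B(s) U_A^\epsilon(s) = -i\epsilon\,\partial_s\!\big(U^\epsilon(s)^* X(s) U_A^\epsilon(s)\big) + i\epsilon\, U^\epsilon(s)^*\dot X(s) U_A^\epsilon(s) - i\epsilon\, U^\epsilon(s)^* X(s) B(s) U_A^\epsilon(s).
\]
Integrating by parts, the boundary term is bounded by $\epsilon(\norm{X(0)}+\norm{X(t)})$ and the two remaining integrals by $\epsilon\int_0^t(\norm{\dot X}+\norm{X}\,\norm{B})\,\d s$; with the uniform bounds above this yields exactly $\norm{(I-P(t))U^\epsilon(t)P(0)} \le C\epsilon(1+t)$, with $C$ depending only on the gap and on sup-in-time bounds for $\dot H$ and $\ddot H$. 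The linear-in-$t$ growth is thus produced precisely by the two bulk integrals.

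I expect the main obstacle to be rigor in the unbounded setting: since $H(t)$ is unbounded, differentiating $P(t)$, forming the products $U^\epsilon(s)^* H(s)$ and $H(s) U_A^\epsilon(s)$, and integrating by parts must all be justified on a common dense domain rather than formally. The hypotheses are tailored to this: graph-norm boundedness of $\dot H$ and $\ddot H$ makes $R(z)\dot H R(z)$ and $R(z)\ddot H R(z)$ bounded operators despite $\dot H,\ddot H$ being only relatively bounded with respect to $H$, which in turn makes $\dot P$, $\ddot P$, $B$, $X$, $\dot X$ genuinely bounded; the gap condition keeps the contour $\Gamma$ at distance $\sim c$ from the spectrum and guarantees invertibility of $\mathrm{ad}_{H}$ on off-diagonal operators. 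Verifying that the reduced resolvent maps into $\mathrm{dom}(H)$, that $U^\epsilon$ preserves this domain, and that the boundary terms are $\epsilon$-small in operator norm is the delicate part, and is precisely the analysis carried out in Teufel's monograph~\cite{teufel}, which we cite for the sharp statement.
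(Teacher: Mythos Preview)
The paper does not prove this theorem: it is stated with a citation to Teufel's monograph and then invoked as a black box in the proof of \thm{qhd-non-convex}. Your proposal is a faithful outline of the standard Kato--Teufel argument from that reference (adiabatic intertwiner $H_A = H - i\epsilon[\dot P,P]$, Duhamel comparison, solving $[H,X]=B$ on off-diagonal operators via the gap, integration by parts to extract the factor $\epsilon$), so there is nothing to compare against in the paper itself; you have essentially reconstructed the cited proof and correctly flagged the domain issues that the graph-norm hypothesis is designed to handle.
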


\paragraph{Characterization of the low-energy spectrum.}
Our proof idea is basically relating the dynamics in \eqn{adiabatic} to that of \textbf{QHD-NC}~\eqn{qhd-nc}.
Since the quantum adiabatic theorem (\thm{adiabatic}) ensures that the system in \eqn{adiabatic} will approximately stay in the instantaneous ground state of $H(t)$ throughout, it is thus important to characterize the ground state.
This ultimately reduces to analyzing the spectral properties of $H = -h^2 \Delta + V$ in the limit $h \to 0$, which is called \emph{semiclassical analysis} in the mathematics and physics literature.
A seminal result from semiclassical analysis is the \emph{Weyl's law}, which asymptotically characterizes the low-energy spectrum of the \sdg\ operator when $h \to 0$.
\begin{definition}
    Define $N(E,H)$ as the number of eigenvalues no greater than $E$, counting multiplicities, of a self-adjoint operator $H$.
\end{definition}

\begin{lemma}[Weyl's law~\cite{ivrii2016100,rozenblum1976distribution}] \label{lem:weyl}
    Let $\Omega \subseteq \mathbb{R}^d$ be open and bounded.
    Fix a continuous\footnote{The continuity assumption can be weakened to $V \in L^{d/2}(\Omega)$; See~\cite{rozenblum1976distribution}.} function $V\colon \Omega \to \mathbb{R}$.
    Define $H(h) \coloneqq -h^2 \Delta + V$ which acts on $L^2$-functions that vanish on $\partial \Omega$.
    We have
    \begin{equation}
        N(E, H(h)) = \left(  \frac{1}{(4\pi)^{d/2}\Gamma(d/2+1)} \cdot h^{-d} \int_{\Omega} \max\{E-V(\xx),0\}^{d/2} \d \xx  \right)(1 \pm o(1)),\footnote{$A = B(1 \pm o(1))$ is equivalent to $A/B \to 1$.}
    \end{equation}
    when $E \to +\infty$ \emph{or} $h \to +0$, where $\Gamma(\cdot)$ is the gamma function.
\end{lemma}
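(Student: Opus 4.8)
The plan is to prove the semiclassical Weyl asymptotics by the classical method of \emph{Dirichlet--Neumann bracketing}, reducing the eigenvalue count for $H(h) = -h^2\Delta + V$ to explicit lattice-point counts for constant-coefficient operators on small cubes. The guiding principle is the correspondence between the quantum eigenvalue count and a phase-space volume: informally, $N(E,H(h))$ should equal $(2\pi)^{-d}$ times the Liouville volume of the classical allowed region $\{(\xx,\xi)\in\Omega\times\R^d : h^2|\xi|^2 + V(\xx)\le E\}$. Computing the $\xi$-integral, the measure of $\{\xi:|\xi|\le \sqrt{(E-V(\xx))_+}/h\}$ equals $\omega_d\,h^{-d}(E-V(\xx))_+^{d/2}$, where $\omega_d=\pi^{d/2}/\Gamma(d/2+1)$ is the volume of the unit ball in $\R^d$; integrating in $\xx$ and dividing by $(2\pi)^d$ produces exactly the stated constant $\omega_d/(2\pi)^d = 1/((4\pi)^{d/2}\Gamma(d/2+1))$. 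The whole proof amounts to making this heuristic rigorous with controlled error.

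First I would partition $\Omega=[-1,1]^d$ into a grid of $K=(2/\ell)^d$ congruent cubes $\{Q_k\}$ of side $\ell$. Comparing quadratic forms, relaxing the boundary conditions on the internal cube walls to Neumann enlarges the form domain and lowers the operator, while imposing Dirichlet conditions raises it; this yields the sandwich
\[
  \bigoplus_k \left(-h^2\Delta_N^{Q_k} + V\right) \;\le\; H(h) \;\le\; \bigoplus_k \left(-h^2\Delta_D^{Q_k} + V\right),
\]
and hence, by the min-max principle, $\sum_k N(E,-h^2\Delta_D^{Q_k}+V)\le N(E,H(h))\le \sum_k N(E,-h^2\Delta_N^{Q_k}+V)$. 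On each cube I would replace $V$ by its extreme values $\underline{V}_k=\min_{\overline{Q_k}}V$ and $\overline{V}_k=\max_{\overline{Q_k}}V$, which are finite by continuity of $V$ on the compact $\overline\Omega$; monotonicity of $N$ in the potential then sandwiches each cube count between the counts for the constant-potential operators $-h^2\Delta^{Q_k}+\underline V_k$ and $-h^2\Delta^{Q_k}+\overline V_k$.

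Next I would evaluate the constant-potential cube counts exactly. On a cube of side $\ell$ with Dirichlet (resp.\ Neumann) conditions, the eigenvalues of $-h^2\Delta+c$ are $h^2\pi^2\ell^{-2}|n|^2+c$ with $n\in\Z_{\ge 1}^d$ (resp.\ $n\in\Z_{\ge 0}^d$), so $N(E,\cdot)$ equals the number of such lattice points inside the ball of radius $R=\ell\sqrt{(E-c)_+}/(h\pi)$. The standard lattice-point estimate gives $2^{-d}\omega_d R^d + O(R^{d-1})$, whose leading term is precisely $\omega_d(2\pi)^{-d}h^{-d}\ell^d(E-c)_+^{d/2}$, matching the phase-space volume of $Q_k$. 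Summing over $k$, the leading terms form a Riemann sum $\omega_d(2\pi)^{-d}h^{-d}\sum_k \ell^d(E-V_k)_+^{d/2}$ for $\omega_d(2\pi)^{-d}h^{-d}\int_\Omega(E-V(\xx))_+^{d/2}\,\d\xx$, while the accumulated remainder is $K\cdot O(R^{d-1})=O(\ell^{-1}(\sqrt E/h)^{d-1})$, which is lower order than the main term $O((\sqrt E/h)^d)$ in both asymptotic regimes.

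The delicate point, and the main obstacle, is the order of limits and the uniform control of the two error sources. I would take $h\to 0$ (or $E\to\infty$) \emph{first}, with $\ell$ held fixed, so that the effective radius $R\to\infty$ on every cube with $(E-\overline V_k)_+>0$ and the lattice remainder becomes negligible against the main term; this leaves upper and lower bounds differing only through the potential oscillation on cubes of side $\ell$. I would \emph{then} send $\ell\to 0$: uniform continuity of $V$ on $\overline\Omega$ forces $\max_k(\overline V_k-\underline V_k)\to 0$, so the Dirichlet and Neumann Riemann sums both converge to the same integral $\int_\Omega(E-V(\xx))_+^{d/2}\,\d\xx$ by dominated convergence (the integrand is bounded by $(E-\min_{\overline\Omega} V)_+^{d/2}$ on the bounded domain $\Omega$). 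Squeezing the two bounds yields $N(E,H(h))=\omega_d(2\pi)^{-d}h^{-d}\int_\Omega(E-V(\xx))_+^{d/2}\,\d\xx\,(1\pm o(1))$, the claim. The only regime-specific bookkeeping is verifying that the remainder-to-main-term ratio, which scales like $\ell^{-1}E^{-1/2}h$, vanishes in each stated limit; both are immediate once $h\to 0$ (respectively $E\to\infty$) precedes $\ell\to 0$.
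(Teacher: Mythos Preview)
The paper does not prove this lemma; it is quoted as a classical result from the cited references. Your Dirichlet--Neumann bracketing argument is the standard textbook route (going back to Weyl and Courant) and is essentially correct: the phase-space constant is right, the operator bracketing and the resulting reversed inequality for $N$ are correct, the lattice-point count on cubes is handled properly, and your two-step limit (first $h\to0$ or $E\to\infty$ with $\ell$ fixed, then $\ell\to0$) is exactly how the $o(1)$ is extracted.

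One point to flag: you silently specialize to $\Omega=[-1,1]^d$, whereas the lemma is stated for an arbitrary bounded open $\Omega$. For general $\Omega$ the grid of cubes does not tile $\Omega$ exactly, and one must separately control the contribution of boundary cubes (those intersecting $\partial\Omega$). This is routine---their total volume is $O(\ell)$ and the eigenvalue count on each is crudely bounded by the Neumann count on the full cube, giving a lower-order correction---but the step is absent from your outline. Since the paper only ever invokes Weyl's law with $\Omega=[-1,1]^d$ (in the proof of \thm{qhd-non-convex}), your specialization is adequate for the paper's purposes, though not for the lemma as stated.
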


Below we prove the main convergence result for {\bf QHD-NC}~\eqn{qhd-nc}.

\begin{proof}[Proof of~{\thm{qhd-non-convex}}]
    Let $\Omega \coloneqq [-1,1]^d$ be the domain of $f$.
    Without loss of generality assume $\inf_{x \in \Omega} f(\xx) = 0$.
    Set $H(t) \coloneqq - \Delta + tf(\xx)$ for $t \in [0,t_{\rm max}]$ with $t_{\rm max}$ to be determined later.
    Let $\norm{f}$ be the operator norm of $f$, or equivalently $\norm{f} \coloneqq \sup_{\xx \in \Omega}\{f(\xx)\}$.
    Note that $\norm{f} < \infty$ because $f$ is continuous on its compact domain $\Omega$.
    $\dot{H}(t)$ and $\ddot{H}(t)$ are bounded since $f(\xx)$ is bounded.
    Applying \lem{sdg-spec} with \rem{sdg-spec} we know that for any $t \in [0,t_{\rm max}]$, $H(t)$ has a positive spectral gap $g(t)$.
    Moreover, since $g\colon [0,t_{\rm max}] \to \mathbb{R}_+$ is obviously continuous and $[0,t_{\rm max}]$ is compact, $g(\cdot)$ has a \emph{minimum} over $[0,t_{\rm max}]$ and hence $H(t)$ satisfies the spectral gap condition in \thm{adiabatic}.
    Now, by \thm{adiabatic}, there exists a constant $C$ such that 
    \begin{equation} \label{eqn:non-convex.adiabatic}
        \norm{(1 - P(t)) U^\epsilon(t) P(0)} \leq C \epsilon (1 + t), 
    \end{equation}
    where we recall that $U^\epsilon(t)$ is the propagator of the dynamics in
    \begin{equation} \label{eqn:non-convex.eps-dynamics}
        i \epsilon \frac{\d}{\d t} \ket{\psi^\epsilon(t)} = H(t) \ket{\psi^\epsilon(t)}
    \end{equation}
    such that $ \ket{\psi^\epsilon(t)} = U^\epsilon(t) \ket{\psi^\epsilon(0)}$ for any initial state $\ket{\psi^\epsilon(0)}$, and $P(t)$ is the rank-1 projector onto the ground state of $H(t)$, denoted by $\ket{\phi_0(t)}$.
    Note that $\ket{\phi} = P(0) \ket{\phi}$ as $\ket{\phi} = \ket{\phi_0(0)}$ is the ground state of $H(0)= -\Delta$. 
    Therefore we have
    \begin{equation}
        \norm{ (I - P(t)) U^\epsilon(t) \ket{\phi}} = \norm{ (I - P(t)) U^\epsilon(t) P(0) \ket{\phi}} \leq C \epsilon (1 + t),
    \end{equation}
    where the inequality follows from \eqn{non-convex.adiabatic}.
    Now, we set
    \begin{equation} \label{eqn:non-convex.init}
        \ket{\psi^\epsilon(0)} = \ket{\phi},
    \end{equation}
    and decompose $\ket{\psi^\epsilon(t)}$ into two orthogonal components being its projection onto $P(t)$ and the remaining part:
    \begin{equation} \label{eqn:non-convex.predecomp}
        \ket{\psi^\epsilon(t)} = U^\epsilon(t) \ket{\phi} = P(t) U^\epsilon(t) \ket{\phi} + (I - P(t)) U^\epsilon(t) \ket{\phi}.
    \end{equation}
    Since $U^\epsilon(t)$ is unitary and $P(t)$ projects onto the instantaneous ground state $\ket{\phi_0(t)}$, we can rewrite \eqn{non-convex.predecomp} as
    \begin{equation} \label{eqn:non-convex.decomp}
        \ket{\psi^\epsilon(t)} = \alpha(t) \ket{\phi_0(t)} + \beta(t) \ket{r^\epsilon(t)},
    \end{equation}
    where $\beta(t) \ket{r^\epsilon(t)} \coloneqq (I - P(t)) U^\epsilon(t) \ket{\phi}$ with $\ket{r^\epsilon(t)}$ normalized, $\abs{\beta(t)} \leq C \epsilon (1 + t)$ and $\abs{\alpha(t)} = 1 - \abs{\beta(t)}^2$. 
    We will use this decomposition of $\ket{\psi^\epsilon(t)}$ to bound its expectation on $f(\cdot)$, i.e., $\braket{\psi^\epsilon(t) | f}{\psi^\epsilon(t)}$.

    Define $S(t) \coloneqq \frac{1}{t} H(t) = - \left( \frac{1}{\sqrt{t}} \right)^2  \Delta  +  f(\xx)$.
    Obviously $\ket{\phi_0(t)}$ is also the ground state of $S(t)$.
    Recall that we assumed $\inf_{x \in \Omega} f(\xx) = 0$ in the beginning of the proof.
    Thus the continuity of $f(\cdot)$ implies that for any $E > 0$,
    \begin{equation}
        \int_{\Omega} \max\{E-f(\xx),0\}^{d/2} \d \xx > 0.
    \end{equation}
    Therefore we can estimate $N(\delta/10, S(t))$ by plugging into Weyl's law $h \coloneqq 1/\sqrt{t}$ and $E \coloneqq \delta/10$:
    \begin{equation}
        N(\delta/10, S(t)) = \left(  \frac{1}{(4\pi)^{d/2}\Gamma(d/2+1)} \cdot t^{d/2} \int_{\Omega} \max\{\delta/10-f(\xx),0\}^{d/2} \d \xx  \right)(1 \pm o(1)),
    \end{equation}
    which implies $\lim_{t \to +\infty} N(\delta/10, S(t)) \to \infty$.
    It follows that the ground energy of $S(t)$ will be eventually smaller than $\delta/10$ as $t$ increases. 
    Therefore, there exists a sufficiently large $t_{\rm max}$ such that
    \begin{align} \label{eqn:non-convex.concentrate}
        \braket{\phi_0(t_{\rm max})}{f|\phi_0(t_{\rm max})} \leq \braket{\phi_0(t_{\rm max})}{S(t_{\rm max})|\phi_0(t_{\rm max})} \leq \delta/10,
    \end{align}
    where the first inequality follows from $-\Delta$ being positive semidefinite.
    Write $\alpha \coloneqq \alpha(t_{\rm max})$ and $\beta \coloneqq \beta(t_{\rm max})$.
    We have
    \begin{subequations}
    \begin{align}
        &\phantom{=}\ \braket{\psi^\epsilon(t_{\rm max})}{f |\psi^\epsilon(t_{\rm max}) } \\
        &= \abs{\alpha}^2 \braket{\phi_0(t_{\rm max})}{f | \phi_0(t_{\rm max})} + \abs{\beta}^2 \braket{r^\epsilon(t_{\rm max})}{f | r^\epsilon(t_{\rm max})}  \nonumber \\
        &\phantom{=}\ + 2\Re{\alpha \beta \braket{ \phi_0(t_{\rm max})}{f | r^\epsilon(t_{\rm max})}}  &  \text{(by \eqn{non-convex.decomp})} \\
        & \leq \abs{\alpha}^2 \delta/10 + \abs{\beta}^2\norm{f} + 2  \abs{\alpha \beta}\norm{f} & \text{(by \eqn{non-convex.concentrate})} \\
        & \leq \abs{\alpha}^2 \delta/10 + C \epsilon (1+t_{\rm max})\norm{f} + 2  \abs{\alpha }C \epsilon (1+t_{\rm max})\norm{f} & (\abs{\beta}^2 \leq \abs{\beta} \leq C \epsilon (1+t_{\rm max})) \\
        & \leq \delta/10 + C' \epsilon (1+t_{\rm max})\norm{f}, & (\abs{\alpha} \leq 1)
    \end{align}
    \end{subequations}
    where $C'$ is some constant that depends on $C$ and we assume $C \epsilon (1 + t_{\rm max}) \ll 0$.
    Let $\epsilon \coloneqq  \delta/ (10 C' (1+t_{\rm max}) \norm{f})$ we obtain 
    \begin{equation} \label{eqn:non-convex.final}
        \braket{\psi^\epsilon(t_{\rm max})}{f |\psi^\epsilon(t_{\rm max}) } \leq 2\delta/10.
    \end{equation}\

    Now we relate the dynamics of~\eqn{non-convex.eps-dynamics} to QHD via a time dilation trick.
    Let
    \begin{equation}
        H_{\rm NC}(\tau) \coloneqq  \frac{1}{\sqrt{t/2}} \left( -\frac12 \Delta\right) + \sqrt{t/2} f(\xx),\quad t\coloneqq \frac{1}{2} \left( 3 \epsilon \tau \right)^{2/3}.
    \end{equation}
    It is straightforward that $\frac{\d} {\d \tau} = \epsilon \frac{1}{\sqrt{2t}} \frac{\d}{\d t}$.
    Hence \eqn{non-convex.eps-dynamics} gives
    \begin{equation} \label{eqn:non-convex.qhd-dynamics}
        i \frac{\d} {\d \tau} \ket{\psi^\epsilon(\tau)} = i \epsilon \frac{1}{\sqrt{2t}}\frac{\d}{\d t} \ket{\psi^\epsilon(t)} = \frac{1}{\sqrt{2t}} H(t) \ket{\psi^\epsilon(t)} = H_{\rm NC}(\tau) \ket{\psi^\epsilon(\tau)},
    \end{equation}
    where $\ket{\psi^\epsilon(\tau)} \coloneqq \ket{\psi^\epsilon(t)}$ with $t = \frac{1}{2} \left( 3 \epsilon \tau \right)^{2/3}$.
    As the result, for any $t_0>0$, running QHD according to the dynamics~\eqn{non-convex.qhd-dynamics} with the initial state $\ket{\psi^\epsilon(t_0)}$ for $\tau \in [T_0, T]$ will yield the final state $\ket{\psi^\epsilon(t_{\rm max})}$, where $T \coloneqq \frac{2\sqrt{2}}{3} \frac{t_{\rm max}^{3/2}}{\epsilon}$ and $T_0 \coloneqq \frac{2\sqrt{2}}{3} \frac{t_{0}^{3/2}}{\epsilon}$.
    Note that for sufficiently small $t_0=t_0(f,\delta)$ we can always have that $\ket{\psi^\epsilon(t_0)}$ is $(\delta/5)$-close to $\ket{\psi^\epsilon(0)} = \ket{\phi}$ in $\ell_2$ distance due to the continuity of $\ket{\psi^\epsilon(\cdot)}$ from \eqn{non-convex.eps-dynamics} and \eqn{non-convex.init}.
    Consequently, replacing the initial condition $\ket{\psi^\epsilon(0)} = \ket{\phi}$ by
    \begin{equation}
        \ket{\psi^\epsilon(t_0)} = \ket{\phi},
    \end{equation}
    we have
    \begin{equation}
        \braket{\psi^\epsilon(t_{\rm max})}{f |\psi^\epsilon(t_{\rm max}) } \leq 2\delta/10 + \delta/5 + \delta/5 < \delta,
    \end{equation}
    by using triangle inequalities with \eqn{non-convex.final}, as desired in the theorem statement.
\end{proof}

\section{Discrete-time QHD for Non-Smooth Optimization}\label{sec:discrete-time-qhd}
In the previous section, the quantum algorithms relied on the \textit{exact} simulation of continuous-time QHD dynamics.
However, in classical optimization, continuous-time dynamics (ODEs) are often used to model iterative algorithms; it is rare to directly simulate the ODE models to solve optimization problems. For example, gradient flow is considered a faithful continuous-time limit of gradient descent (GD), but they are seldom used as algorithms themselves. This is because GD is guaranteed to converge to a first-order stationary point, even with a relatively large step size. As a result, the trajectory of GD may deviate significantly from its continuous-time limit.

In this section, we discuss the time discretization of QHD dynamics and the corresponding convergence properties. Based on the standard first-order Trotter formula, we propose the discrete-time QHD algorithm. We study the behavior of discrete-time QHD for both smooth and non-smooth optimization problems and how step size is related to the global convergence.

\subsection{Quantum algorithms and complexity analysis}
A standard approach to simulating quantum evolution is to leverage the product formula (or operator splitting in numerical analysis). The essential idea of the (first-order) product formula is as follows. For a Hamiltonian operator $H = A + B$ and a small time $h$, we have the approximation: 
\begin{equation}\label{eqn:product-formula}
    e^{iHh} \approx e^{iAh}e^{iBh}.
\end{equation} 
In the previous section, we studied continuous-time QHD generated by the following Hamiltonian operators:
\begin{align}
    \hat{H}(t) = \frac{1}{\lambda(t)} \left(-\frac{1}{2}\Delta \right) + \lambda(t) f(x),
\end{align}
where the time-dependent function are chosen as: $\lambda(t) = e^{2\sqrt{\mu}t}$ for $\mu$-strongly convex $f$ (i.e., \textbf{QHD-SC}), $\lambda(t) = t^3$ for convex $f$ (i.e., \textbf{QHD-C}), and $\lambda(t) = \alpha t^{1/3}$ for non-convex $f$ (i.e., \textbf{QHD-NC}).
Observing that the Hamiltonian operator $\hat{H}(t)$ encompasses two components, namely, the kinetic operator $- \frac{1}{2}\Delta$ and the potential operator $f$. Since both operators can be efficiently simulated on a quantum computer, we can simulate the QHD dynamics via the product formula~\eqn{product-formula}, as summarized in~\algo{discrete-time-qhd}. 

\begin{algorithm}[H]
\begin{small}
\caption{Discrete-time QHD}
\label{algo:discrete-time-qhd}
\vspace{5pt}
\textbf{Classical inputs:} time-dependent parameter $\lambda(t)$ (\textbf{QHD-SC}: $\lambda(t) = e^{2\sqrt{\mu}t}$; \textbf{QHD-C}: $\lambda(t) = t^3$; \textbf{QHD-NC}: $\lambda(t) = \alpha t^{1/3}$), step size $h > 0$, number of iterations $K$\\
\textbf{Quantum inputs:} zeroth-order oracle $O_f$, an initial guess state $\ket{\Psi_0}$ \\
\textbf{Output:} an approximate solution to the minimization problem~\eqn{problem-form}
\vspace{7pt}
\hrule
\vspace{5pt}
\begin{algorithmic}

\For{$k\in\{1, ..., K\}$}
    \State Determine $t_k = k h$.
    \State Implement a quantum circuit $U_{k,1}$ that computes $e^{-i h \lambda(t_k)f}$.
    \State Implement a quantum circuit $U_{k,2}$ that computes $e^{i h \frac{1}{2\lambda(t_k)} \Delta}$.
    \State Apply the quantum circuits $U_{k,1}$ and $U_{k,2}$ to $\ket{\Psi_{k-1}}$ to obtain $\ket{\Psi_k} = U_{k,2}U_{k,1} \ket{\Psi_{k-1}}$.
\EndFor
\State Measure the quantum state $\ket{\Psi_k}$ using computational basis.
\end{algorithmic}
\vspace{5pt}
\end{small}
\end{algorithm}

\paragraph{Complexity analysis.}
In practice, the operators $f$ and $-\Delta / 2$ are unbounded operators (i.e., with infinite $L^2$-norm), and their corresponding quantum evolutions $e^{-i h \lambda(t_k)f}$ and $e^{i h \frac{1}{2\lambda(t_k)} \Delta}$ cannot be directly implemented by a digital quantum computer with a finite degree of freedom. 
To implement the quantum circuits $U_{k,1}$ and $U_{k,2}$, we need to perform spatial discretization in the real space $\R^d$ and reduce the simulation to a problem restricted to a fine mesh with $N$ points on each edge, a standard technique in numerical simulation of PDE. By the spatial discretization, the quantum states $\ket{\Psi_k}$ are now represented by a $N^d$-dimensional vector with a normalized $\ell^2$-norm. Thanks to the superposition nature of quantum computation, we can compute this exponential-size vector in a quantum computer with a computational cost logarithmically small in the size of the vector, as detailed in the following proposition.

\begin{proposition}\label{prop:discrete-time-qhd-complexity}
    Fix a spatial discretization number $N > 0$. Then,~\algo{discrete-time-qhd} can be implemented using $K$ quantum queries to the function value of $f$ and an additional $\mathcal{O}(dK\log(N))$ elementary quantum gates.
\end{proposition}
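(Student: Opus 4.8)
The plan is to show that each of the $K$ iterations of \algo{discrete-time-qhd} can be realized with $O(1)$ calls to $O_f$ and $O(d\log N)$ elementary gates, after which summing over $k$ yields the stated bound. First I would fix the spatial discretization: replace $\R^d$ (or the box $\Omega$) by a uniform grid of $N$ points per coordinate, so that a wave function becomes a unit vector in $(\C^N)^{\otimes d}$ stored on $n \coloneqq d\lceil\log_2 N\rceil$ qubits, one register of $\lceil \log_2 N\rceil$ qubits per coordinate. Under this encoding the two factors of the Trotter step have convenient structure: $U_{k,1} = e^{-ih\lambda(t_k)f}$ is diagonal in the computational (position) basis, while $U_{k,2} = e^{i\frac{h}{2\lambda(t_k)}\Delta}$ is diagonalized by the quantum Fourier transform, since the discretized Laplacian with the chosen boundary condition has eigenvalues that form a separable function of the momentum indices $(\kk_1,\dots,\kk_d)$.

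For $U_{k,1}$ I would call the oracle once to write the value $f(\xx)$, to a fixed precision $b = O(\mathrm{polylog}(N))$, into a clean ancilla register, $\ket{\xx}\ket{0}\mapsto\ket{\xx}\ket{f(\xx)}$; then imprint the phase $e^{-ih\lambda(t_k)f(\xx)}$ by a sequence of $O(b)$ single-qubit $Z$-rotations controlled on the bits of the value register (the angles are classically precomputed from $h$, $\lambda(t_k)$, and the bit weights); then uncompute the value register with $O_f^\dagger$. This costs $O(1)$ oracle queries and $O(b) = O(\mathrm{polylog}(N))$ gates per iteration, with no dependence on $d$.

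For $U_{k,2}$ I would apply the $d$-fold tensor-product QFT to pass to the momentum basis (cost $O(d\log^2 N)$ controlled-phase gates, or $O(d\log N)$ up to a $\log\log N$ factor via an approximate QFT), apply the separable diagonal unitary $\ket{\kk_1,\dots,\kk_d}\mapsto \exp\!\big(i\tfrac{h}{2\lambda(t_k)}\sum_j \sigma(\kk_j)\big)\ket{\kk_1,\dots,\kk_d}$ — each factor $e^{i c\,\sigma(\kk_j)}$ is a quadratic phase in the bits of $\kk_j$, hence a product of $O(\log^2 N)$ two-qubit controlled-phase gates — and then apply the inverse QFT. This contributes $O(d\log N)$ gates per iteration (up to the same polylog refinements) and no oracle queries. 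Summing over the $K$ iterations gives $K$ forward oracle queries (the $K$ uncomputation adjoints only double the count) and $O(dK\log N)$ elementary gates, which is \prop{discrete-time-qhd-complexity}.

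I expect the main obstacle to be the bookkeeping that converts ``diagonal in position / momentum'' into an explicit gate count with the correct $\log N$ dependence: one must commit to finite bit-precision for the stored function values and momentum labels, bound the resulting discretization and rounding error in the implemented circuit against the ideal $U_{k,1},U_{k,2}$, and — for \textbf{QHD-NC}, whose Dirichlet boundary condition is \emph{not} diagonalized by the ordinary QFT — either substitute a discrete sine transform (same asymptotic cost) or embed $\Omega$ into a doubled periodic grid. The underlying skeleton (QFT-diagonalize the kinetic term, oracle-diagonalize the potential term) is otherwise the standard split-operator pseudospectral construction.
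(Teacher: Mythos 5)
Your proposal follows essentially the same route as the paper's proof: diagonalize the kinetic factor via the (tensor-product) QFT, implement the potential factor by one oracle call plus phase kickback in the position basis, and sum over the $K$ Trotter steps. Your additional caveats (the exact QFT costs $O(\log^2 N)$ per coordinate unless an approximate QFT is used, and the Dirichlet boundary condition of \textbf{QHD-NC} calls for a discrete sine transform rather than the ordinary QFT) are legitimate refinements that the paper's terser argument, which defers to \cite[Theorem 3]{leng2023qhd}, does not spell out.
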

\begin{proof}
    The complexity analysis is essentially the same as in~\cite[Theorem 3]{leng2023qhd}.
    The unitary evolution $e^{i h \frac{1}{2\lambda(t_k)} \Delta}$ can be diagonalized by the Quantum Fourier Transform (QFT), and the eigenvalues can be explicitly computed with a circuit with $\mathcal{O}(d\log(N))$ gates. 
    The unitary $e^{-i h \lambda(t_k)f}$ can be implemented by just 1 use of the gradient oracle $O_{f}$. After $K$ iterations, the overall query complexity is $K$, with additional $\mathcal{O}(dK\log(N))$ elementary quantum gates.
\end{proof}

\subsection{Numerical study of convergence properties} \label{sec:convex-numerics}
In this section, we leverage numerical methods to analyze the convergence behavior of the discrete-time Quantum Hamiltonian Descent with constant step size.

\subsubsection{Warm-up: smooth and non-smooth optimization} \label{sec:disc-qhd}

In classical optimization theory, it is well-known that gradient descent converges with a fixed step size $h \le 1/ L$, where $L$ is the Lipschitz constant of $\nabla f$. 
In contrast, when applied to non-smooth optimization problems, a subgradient algorithm with a fixed step size $h$ will lead to a $\Omega(h)$ optimality gap, regardless of the number of iterations.

In the preceding section, we establish the asymptotic convergence rate of the QHD dynamics for both convex~(\thm{qhd-convex}) and strongly convex~(\thm{qhd-strongly-convex}) cases. These results indicate that the continuous-time dynamics will achieve an arbitrarily small optimality gap for a long enough evolution time $t$.
However, our numerical experiments show that the non-vanishing optimality gap re-emerges in the discrete-time QHD algorithm with constant step size.

\begin{figure}[htb!]
\centering
\begin{subfigure}[t]{0.48\linewidth}
\centering
\includegraphics[scale=0.48]{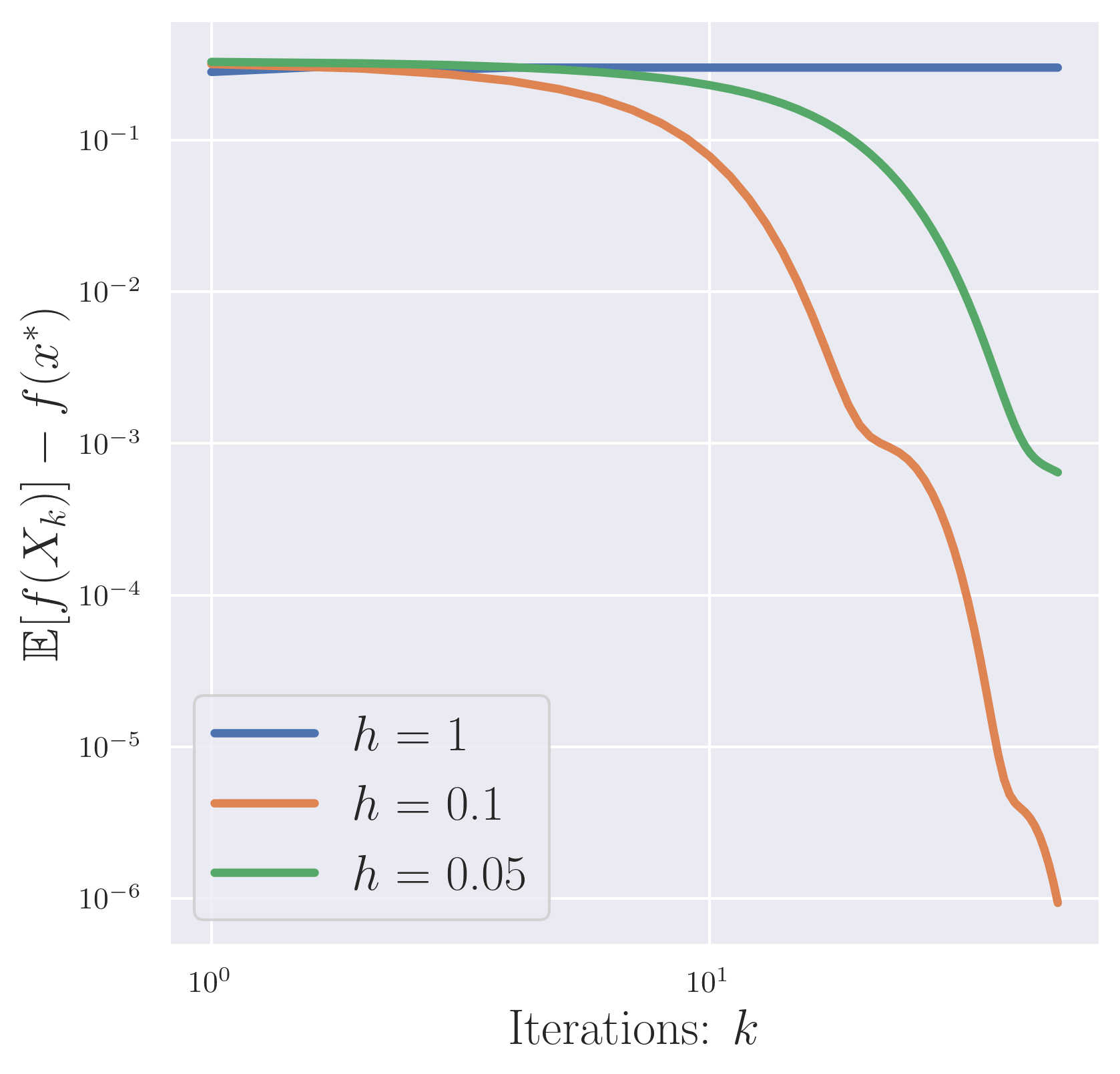}
\caption{$f(x) = x^2$.}
\label{fig:warm-up-a}
\end{subfigure}
\begin{subfigure}[t]{0.48\linewidth}
\centering
\includegraphics[scale=0.48]{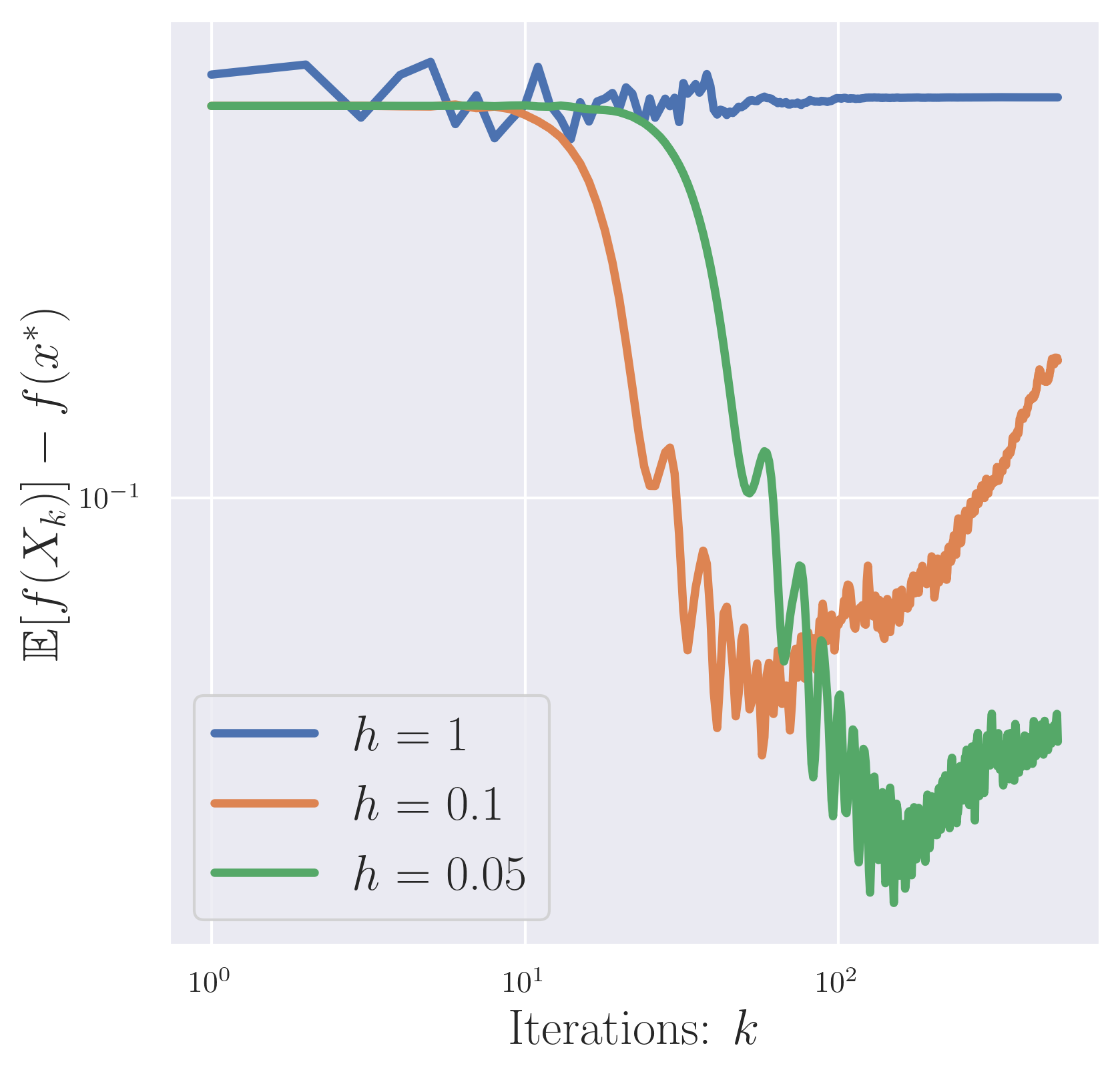}
\caption{$f(x) = |x|$.}
\label{fig:warm-up-b}
\end{subfigure}
\caption{Numerical demonstration of convergence behaviors of discrete-time QHD with fixed step size $h$ for smooth and non-smooth optimization.}
\label{fig:warmup}
\end{figure}

In~\fig{warmup}, we illustrate the change of the expected function value in discrete-time QHD with various step sizes $h$.
The left panel (\fig{warm-up-a}) depicts the performance of discrete-time QHD for a strongly convex smooth objective function $f(x) = x^2$. While the discrete-time QHD algorithm does not appear to converge with a large step size (e.g., $h = 1$), it exhibits a clear convergence pattern when the step size $h$ drops below a certain threshold, a phenomenon echoed with the standard gradient descent for smooth optimization. 

In the right panel (\fig{warm-up-b}), we illustrate the behavior of discrete-time QHD for a convex non-smooth objective function $f(x) = |x|$. We find that, even with a small step size (e.g., $h = 0.05$), the objective function value curve starts oscillating once it hits a sub-optimal value. Meanwhile, the terminal sub-optimal values of discrete-time QHD seem to depend on the step size $h$. With $h = 0.1$, the minimal value of the expected function value is achieved at around $0.03$; the minimal expected function value decreases to around $0.01$ with $h = 0.05$. The correlation between step sizes and optimality gaps in discrete-time QHD demonstrates a qualitative similarity to sub-gradient methods. 

In the subsequent section, we conduct more fine-grained numerical experiments to understand the convergence of discrete-time QHD with constant step size, focusing on the relations between the sub-optimality gap and the step size. A quantitative comparison with subgradient algorithms is provided in~\sec{comparison-subgrad}. 

\paragraph{Optimality gap.}
First, we formally define the optimality gap in the discrete-time QHD algorithm.
We denote $\Psi_k(x)$ as the quantum wave function after $k$ iterations in discrete-time QHD. By measuring the quantum register at this point, the outcome is a random vector $X_k \in \R^d$ distributed according to the probability density $|\Psi_k(x)|^2$.
The optimality gap achieved in the first $k$ iterations is characterized by
\begin{align}
   \mathcal{G}(k) \coloneqq f^{(k)}_{\text{best}} - f(x^*) =  \min_{1 \le j \le k}  \left(\mathbb{E}[f(X_k)] - f(x^*)\right).
\end{align}
Our numerical results depicted in~\fig{warmup} can be interpreted as follows. For smooth optimization problems, provided that the step size $h$ is below a threshold determined by the objective function $f$, the optimality gap $\mathcal{G}(k)$ converges to zero as the iteration number $k$ increases. 
On the other hand, the optimality gap of discrete-time QHD for non-smooth optimization will get stuck at a non-zero value no matter how many iterations are applied. Now, we investigate the quantitative relations between the step size $h$ and the optimality gap in discrete-time QHD for non-smooth optimization. 

\begin{figure}[htb!]
\centering
\begin{subfigure}[t]{0.48\linewidth}
\centering
\includegraphics[scale=0.48]{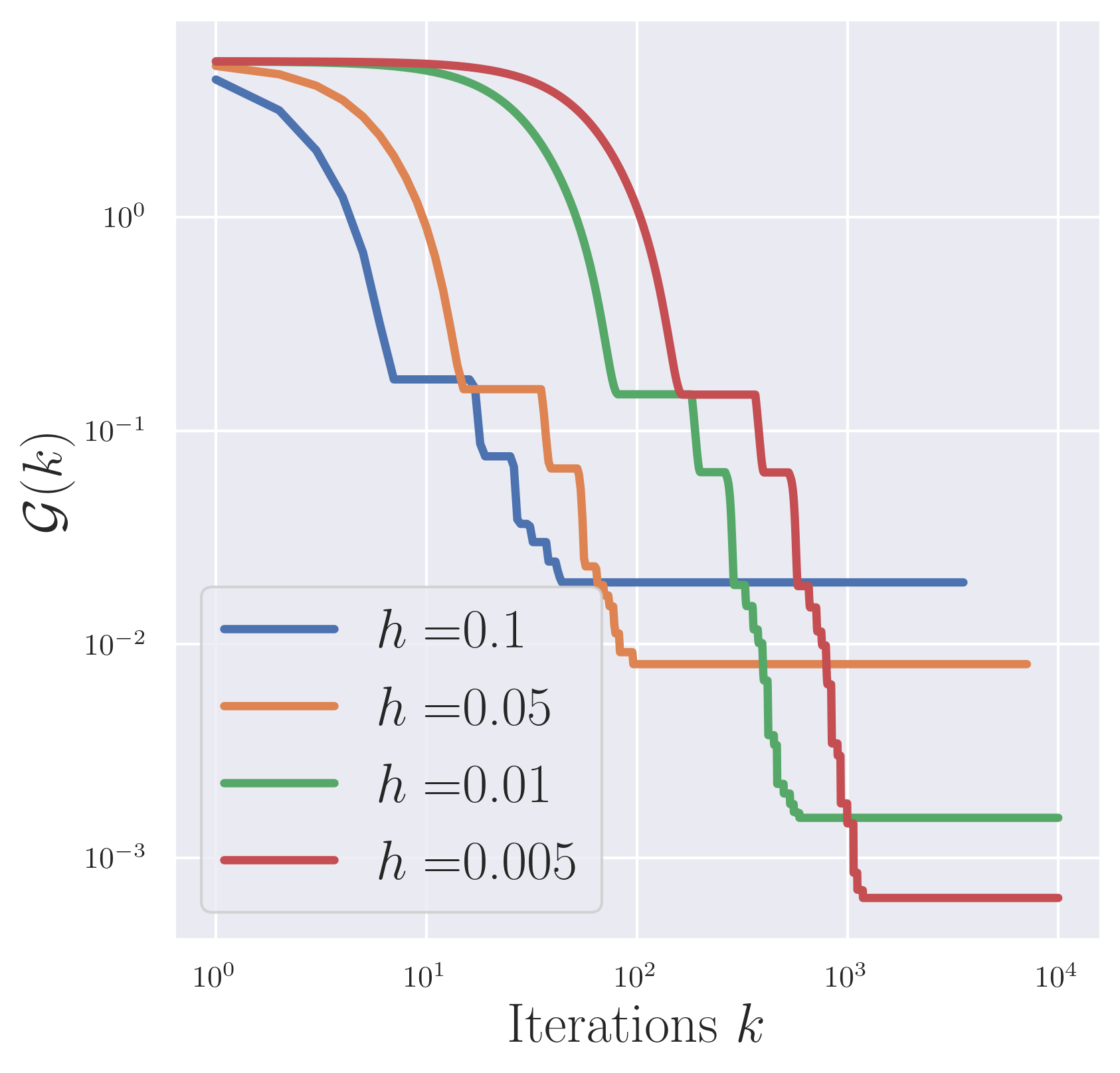}
\caption{Expected function values}
\label{fig:exp-abs-a}
\end{subfigure}
\begin{subfigure}[t]{0.48\linewidth}
\centering
\includegraphics[scale=0.48]{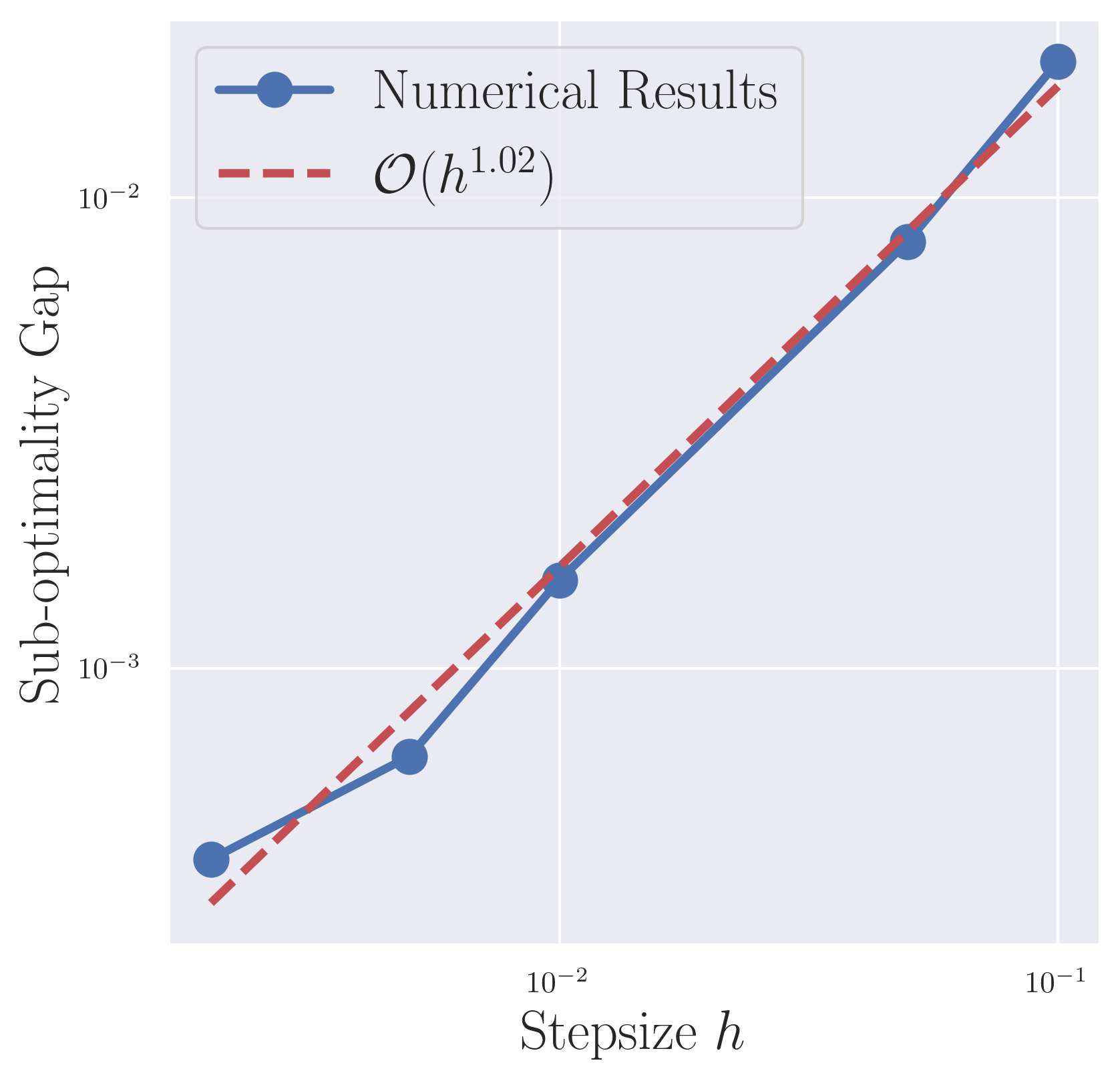}
\caption{Sub-optimality gaps}
\label{fig:exp-abs-b}
\end{subfigure}
\caption{Convergence behaviors of discrete-time QHD with various constant step sizes $h$ for $f(x) = e^{|x|} - 1$.}
\label{fig:exp-abs-convergence}
\end{figure}

\subsubsection{Case I: non-smooth strongly convex optimization}
In~\fig{exp-abs-convergence}, we illustrate the change of the optimality gap $\mathcal{G}(k)$ as a function of the iteration number $k$. We choose a non-smooth objective function $f(x) = e^{|x|}-1$, which is $1$-strongly-convex as we have $f''(x) \ge 1$. Similar to the previous case, the optimality gap first sees a sharp decrease with a linear convergence rate $\mathcal{O
}(e^{-k})$. As the iteration progresses, the optimality gap plateaus at a value linear in the step size $h$, as depicted in~\fig{exp-abs-b}. 
The two-stage evolution in the optimality gap in the strongly convex problem is consistent with our observations in the previous example, as summarized as follows:
\begin{align}\label{eqn:sc-optimality-gap}
    \mathcal{G}(k) \approx Ae^{-\sqrt{\mu}kh} + B h,
\end{align}
where the linear convergence rate $\mathcal{O}(e^{-\sqrt{\mu}kh})$ corresponds to the continuous-time case with a convergence rate $\mathcal{O}(e^{-\sqrt{\mu}t})$, and the terminal optimality gap scales linearly with the step size $h$.

\begin{figure}[htb!]
\centering
\begin{subfigure}[t]{0.48\linewidth}
\centering
\includegraphics[scale=0.48]{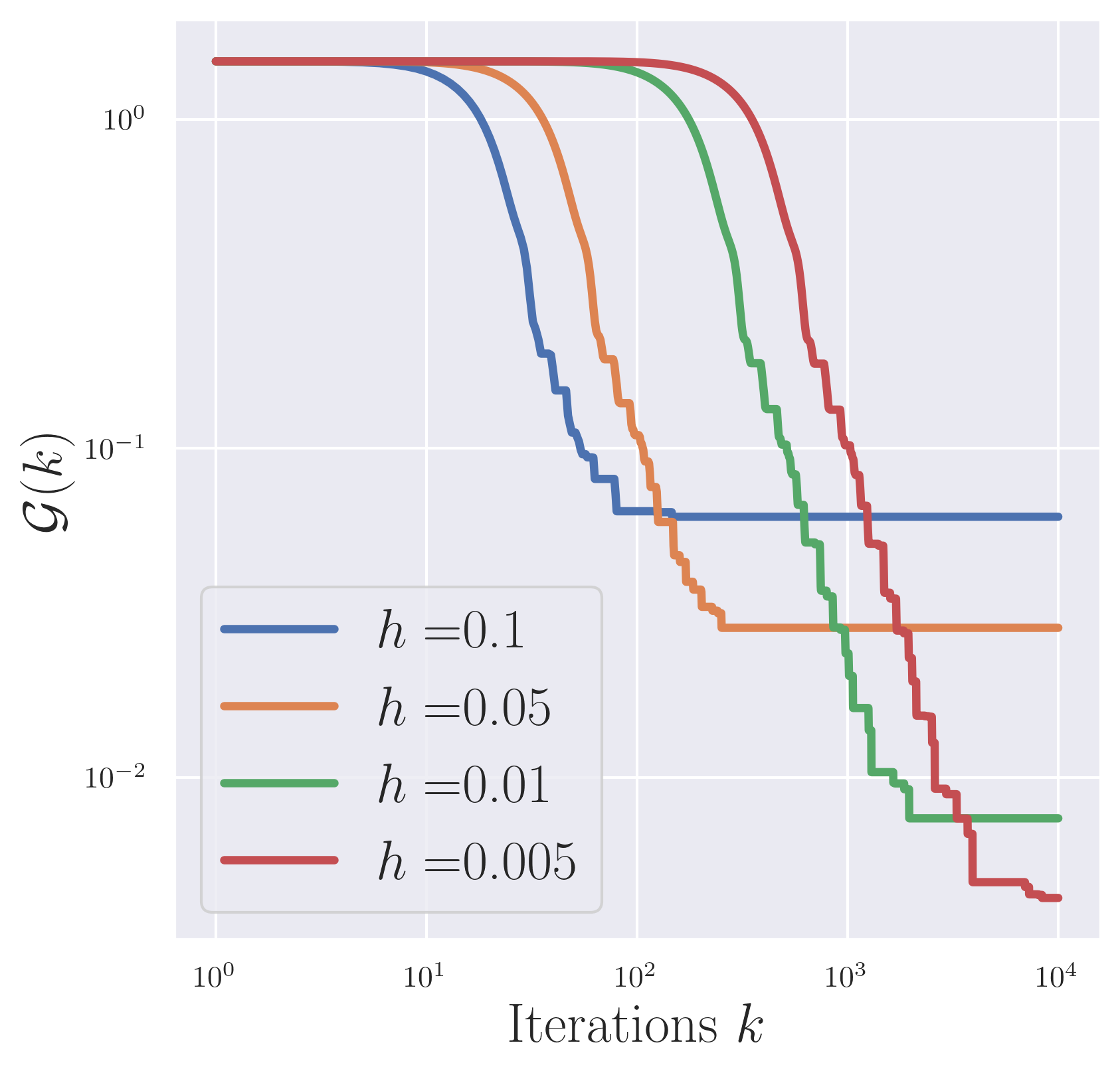}
\caption{Expected function values}
\label{fig:abs-val-a}
\end{subfigure}
\begin{subfigure}[t]{0.48\linewidth}
\centering
\includegraphics[scale=0.48]{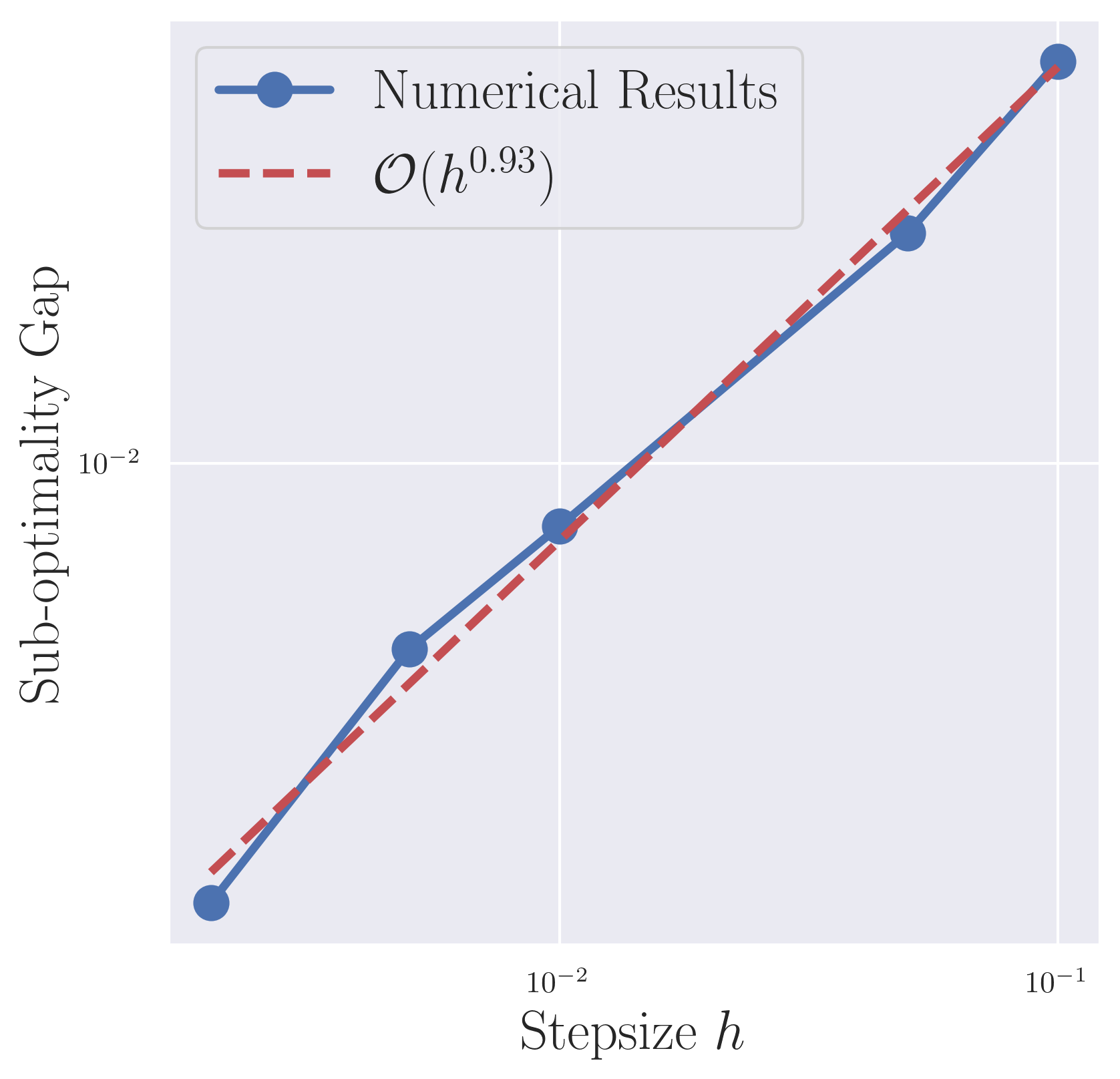}
\caption{Sub-optimality gaps}
\label{fig:abs-val-b}
\end{subfigure}
\caption{Convergence behaviors of discrete-time QHD with various constant step sizes $h$ for $f(x) = |x|$.}
\label{fig:abs-val-convergence}
\end{figure}

\subsubsection{Case II: non-smooth convex optimization}
In~\fig{abs-val-a}, we illustrate the change of the optimality gap $\mathcal{G}(k)$ as the iteration number $k$ grows. 
The numerical experiment is exemplified by a non-smooth objective function $f(x) = |x|$, where the optimality gap is computed with various step sizes $h$. 
We observe that the optimality gap first undergoes a convergence process with an approximate convergence rate $\mathcal{O}(k^{-2})$, which appears to be inherited from the continuous-time dynamics exhibiting a $O(t^{-2})$ convergence rate.
Then, as the iteration continues, the optimality gap plateaus.
\fig{abs-val-b} shows the terminal sub-optimality gap with different choices of $h$.
A linear fit reveals that the terminal optimality gap scales almost linearly with the step size $h$. 
In summary, the numerical observations suggest that following scaling in the optimality gap:
\begin{align}\label{eqn:convex-optimality-gap}
   \mathcal{G}(k) \approx \frac{A}{(kh)^2} + B h,
\end{align}
where $A$ and $B$ are two multiplicative constants that are independent of the step size $h$ and iteration number $k$.

\subsection{Discussion and limitation}\label{sec:comparison-subgrad}

\paragraph{Comparison with subgradient algorithms.}
Recall that the subgradient algorithm with constant step size $s$ achieves an optimality gap
\begin{equation}
    \frac{\|\xx_0 - \xx^*\|}{2sk} + \frac{L^2s}{2}
\end{equation}
after $k$ iterations. Therefore, to achieve an optimality gap $\epsilon$ in subgradient algorithm, we need to choose a step size $s \sim \epsilon$ and an iteration number $k = \mathcal{O}(1/\epsilon^2)$. 
Meanwhile, our numerical results~\eqn{convex-optimality-gap} suggest a faster convergence rate in terms of $k$, which implies that we may choose $h \sim \epsilon$ and 
\begin{equation}
    k = \mathcal{O}\left(\frac{1}{\epsilon^{3/2}}\right)
\end{equation}
in discrete-time QHD to achieve the same optimality gap $\epsilon$. This quantum convergence rate is a sub-quadratic speedup compared to the classical subgradient algorithm, and it also appears to beat the classical query lower bound $\Omega(1/\epsilon^2)$, as detailed in~\sec{subgrad-review}. This observation suggests a potential fundamental separation between quantum and classical optimization algorithms, and we plan to conduct a detailed investigation in future research.

For a $\mu$-strongly convex problem,~\thm{subg-cvg-s} indicates that the subgradient algorithm requires $k = \mathcal{O}(1/(\mu\epsilon))$ iterations to achieve an $\epsilon$ optimality gap. With our numerical extrapolation~\eqn{sc-optimality-gap}, we need to choose a step size $h \sim \epsilon$ and an iteration number 
\begin{equation}
    \mathcal{O}\left(\frac{1}{\sqrt{\mu}}\cdot \frac{\log(1/\epsilon)}{\epsilon}\right).
\end{equation}
Compared to the classical subgradient method, there is no speedup in terms of $1/\epsilon$ but a quadratic speedup in terms of $1/\mu$.

\paragraph{Consequences for quantum query complexity.}
For convex non-smooth optimization problems, the discrete-time QHD iteration number $k = \mathcal{O}(\epsilon^{-3/2})$
seem to contradict a no-go theorem in~\cite{garg2021no}, which states that no quantum algorithm can find an $\epsilon$-approximate minimum of a non-smooth convex function using $\widetilde{o}(\epsilon^{-2})$ queries to its value and subgradient.
Note that this $\widetilde{\Omega}(\epsilon^{-2})$ quantum query lower bound matches the bound for classical algorithms~\cite{nemirovski1983problem}, suggesting no quantum computational advantage could be achieved in this scenario.
However, we remark that the no-go theorem in~\cite{garg2021no} implicitly assumes that the worse-case non-smooth optimization problem instance has a large dimension 
$d = \widetilde{\Theta}(\epsilon^{-4})$.
Therefore, it is still possible that the discrete-time QHD can realize a query complexity better than $\mathcal{O}(\epsilon^{-2})$ for problem instances with dimension $d \ll \epsilon^{-4}$.
When it comes to the dependence on the dimension $d$, the best query complexity of classical algorithms is $\mathcal{O}(d \log \epsilon^{-1})$ given by the center of gravity method~\cite{bubeck2015convex}, which saturates the lower bound $\Omega(d \log \epsilon^{-1})$ shown in~\cite{nemirovski1983problem}.
It is still unclear if discrete-time QHD can achieve a lower query complexity for some parameter regimes of $d$ and $\epsilon$.

\section{Numerical Results for Non-Smooth Non-Convex Optimization} \label{sec:num}

In this section, we focus on the empirical performance of QHD for non-smooth non-convex optimization problems since we have already established the convergence of both continuous- and discrete-time QHD for non-smooth convex optimization in earlier sections.
We evaluate the performance of QHD and two variants of subgradient methods, namely, Subgrad (vanilla subgradient algorithm)\footnote{In our implementation of Subgrad, the algorithm returns the final iterate instead of the best among all iterates, requiring only \emph{one} function value query to $f(\cdot)$ throughout and allowing us to allocate other queries to $\partial_C f$ for a better performance with the same query number limit. This is a common practice in subgradient methods~\cite{zamani2023exact}.} and LFMSGD (learning rate-free momentum stochastic gradient descent)~\cite{hu2024learning}.
We test them on 12 non-smooth, non-convex, and box-constrained functions, including one 1-dimensional, nine 2-dimensional, and two 3-dimensional cases, as detailed in \secapp{A-func}.
For details on QHD, Subgrad, and LFMSGD, refer to \sec{qhd-review}, \sec{subgrad-review}, and \sec{A-classical-algo}, respectively.  
All source code is publicly available~\cite{git}. 

\subsection{Methodology} \label{sec:meth}

We use classical simulation of the discrete-time QHD introduced in \sec{disc-qhd} to evaluate the performance of QHD. 
For fairness among QHD, Subgrad, and LFMSGD, each algorithm is allowed a total of 10,000 queries to the function value (resp., subgradient) for QHD (resp. Subgrad and LFMSGD) in a single run.
Subgrad and LFMSGD will start with 10,000 initial points (in 10,000 independent runs) chosen from the domain uniformly at random.
The experiment is conducted on a consumer laptop (Intel Core i7-8750H CPU 2.20GHz).

\paragraph{Parameter Setup.}
For QHD, we introduce a parameter $L$ to rescale a test function into hypercube $[-L,L]^d$.
Specifically, a test function \( g: \bigtimes_{1 \leq i \leq d}[l_i, u_i] \to \mathbb{R} \) is transformed into the function \( f : [-L,L]^d \to \mathbb{R} \) as follows:  
\begin{equation} \label{eqn:func-scaling}
    f(x_1,\dots,x_d) \coloneqq g ( x_1',\dots,x_d' ), \quad x_i' \coloneqq l_i + \frac{u_i-l_i}{2L}(x_i + L).
\end{equation} 
We parameterize QHD with \( L \) and set \( \lambda(t) = t^3 \) in~\eqn{cont-time-qhd-general}, with total evolution time \( T=10 \), time step size \( h = 1/1000 \), and spatial discretization number \( N = 512 \).\footnote{Adjusting \( L \) is equivalent to modifying \( \lambda(t) \) and \( T \) in QHD; refer to \secapp{A-qhd-L}.}  
We choose the same \( \lambda(t) \) as in \textbf{QHD-C}~\eqn{qhd-c} rather than \textbf{QHD-NC}~\eqn{qhd-nc} because it yields significantly stronger empirical performance.  
This is analogous to the case of stochastic gradient descent (SGD), where learning rate schedules used in practice bear little resemblance to those recommended by theory~\cite{defazio2023optimal}.
For Subgrad parameterized by $\eta$, the learning rate follows \( s_j = \eta/\sqrt{j} \) in light of \thm{subg-cvg}.  
LFMSGD, parameterized by $\sigma$, is configured with the default setting \( \beta = 0.9 \) as recommended in~\cite{hu2024learning}, and the Gaussian noise added to each evaluation of the subgradient follows $\mathcal{N}(\boldsymbol{0},\sigma^2)$.

\paragraph{Performance Metric.} 
All three algorithms---QHD, Subgrad, and LFMSGD---exhibit randomness: QHD and LFMSGD have internal sources of randomness, while LFMSGD and Subgrad depend on the random initial point.
To evaluate their performance, we use the \emph{best-of-$k$ optimality gap} as a metric.\footnote{Here, $k$ denotes the number of runs and should not be confused with its usage as the iteration number in previous sections.} This metric is inspired by a common practice of running a randomized algorithm multiple times and selecting the best result. 
Formally, for an integer $k$, the best-of-$k$ optimality gap is defined as:
\begin{equation}
    \mathbb{E}\left[\min_{1 \leq i \leq k} f(\tilde{\xx}_i) - f_{\rm min}\right],
\end{equation}
where $f(\cdot)$ is the test function, $f_{\rm min} \coloneqq \min_{\xx} f(\xx)$ is the global minimum, $\tilde{\xx}_i$ is the solution returned by the algorithm in the $i$th run, and the expectation is taken with respect to the algorithm's internal randomness and/or random initialization. 
The metric reduces to the expected optimality gap when $k=1$. 
We test all three algorithms on 12 test functions with $k \in \{1, 3, 10, 30, 100\}$.
The best-of-$k$ optimality gap is estimated using the Monte Carlo method for Subgrad and LFMSGD. 
In contrast, for QHD, it is computed exactly since the full distribution of the final output of the algorithm is known in our classical simulation.

\paragraph{Parameter Optimization.} 
QHD, Subgrad, and LFMSGD are parameterized and their performance depend heavily on the choice of corresponding parameters $L$, $\eta$, and $\sigma$, respectively.
Therefore, we employ Bayesian optimization~\cite{snoek2012practical} to select the optimal parameter that minimizes the best-of-$k$ optimality gap. 
Bayesian optimization is particularly suited for hyperparameter tuning in settings where function evaluations are costly. 
For each combination of $k \in \{1, 3, 10, 30, 100\}$ and test function, we limit the number of evaluations of the best-of-$k$ optimality gap to 100 when optimizing parameters.

\subsection{Benchmarking QHD against classical algorithms}

\begin{figure}[htbp]
        \centering
        \includegraphics[height=0.9\textheight]{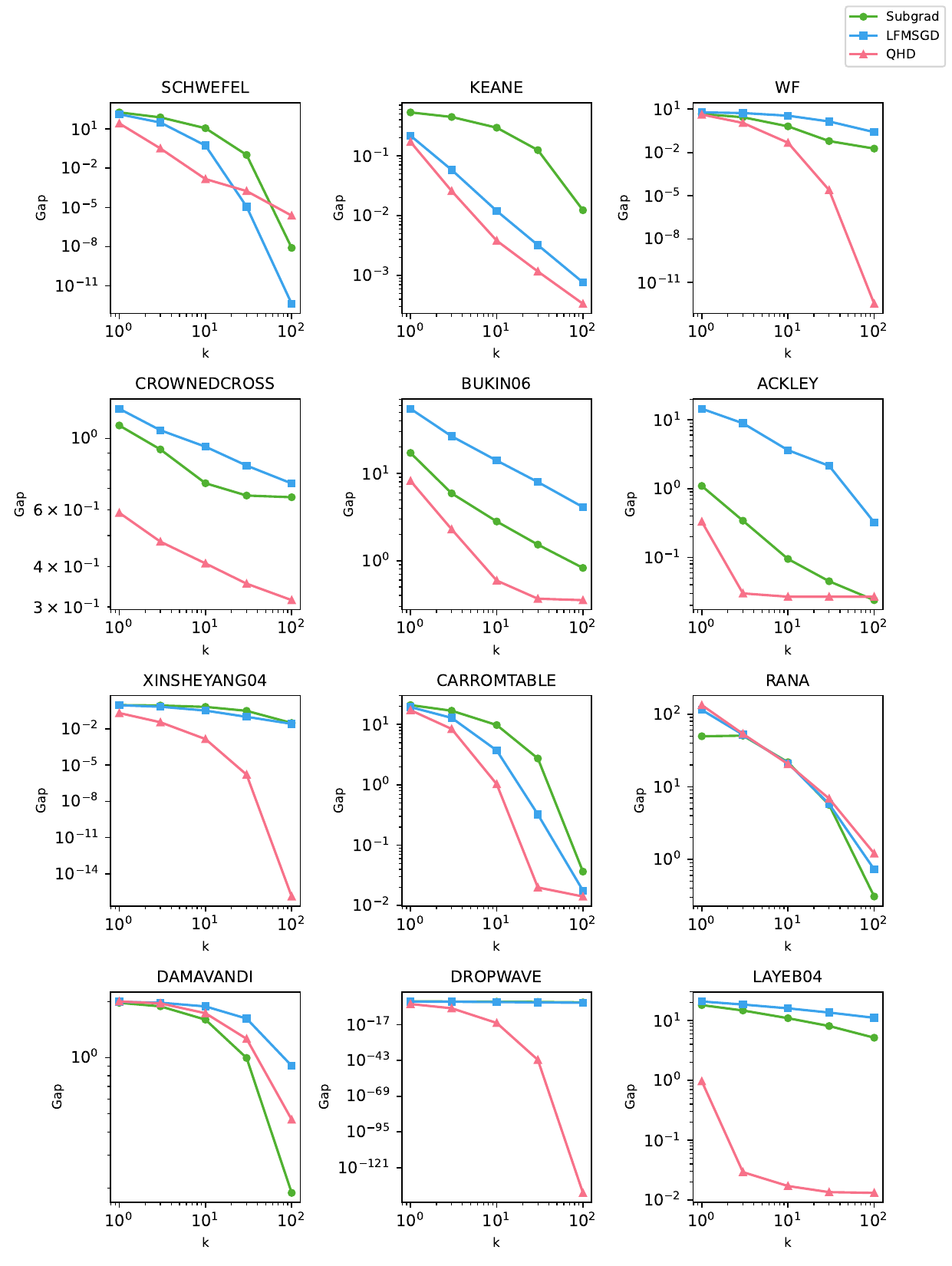} 
        \caption{Best-of-$k$ optimality gap for 12 test functions (cf. \tab{best-of-k-table}). QHD outperforms Subgrad and LFMSGD for all choices of $k$ on every test functions except \texttt{SCHWEFEL}, \texttt{ACKLEY}, \texttt{RANA} and \texttt{DAMAVANDI}.}
        \label{fig:best-of-k}
\end{figure}

\begin{table}[htbp]
\centering
\resizebox{\textwidth}{!}{%
\begin{tabular}{|ccccc|l|ccccc|}
\hline
\textbf{Test Function}                                                                      & $\boldsymbol{k}$   & \textbf{QHD Gap}                        & \textbf{LFMSGD Gap}                      & \textbf{Subgrad Gap}                    &  & \textbf{Test Function}                                                                      & $\boldsymbol{k}$   & \textbf{QHD Gap}                          & \textbf{LFMSGD Gap} & \textbf{Subgrad Gap}                    \\ \hline
                                                                                       & \textbf{1}   & {\color[HTML]{3166FF} \textbf{2.72e+1}} & \textbf{1.37e+2}                         & \textbf{1.90e+2}                        &  &                                                                                        & \textbf{1}   & {\color[HTML]{3166FF} \textbf{1.70e-1}}   & \textbf{2.17e-1}    & \textbf{5.31e-1}                        \\
                                                                                       & \textbf{3}   & {\color[HTML]{3166FF} \textbf{3.29e-1}} & \textbf{3.18e+1}                         & \textbf{7.91e+1}                        &  &                                                                                        & \textbf{3}   & {\color[HTML]{3166FF} \textbf{2.58e-2}}   & \textbf{5.81e-2}    & \textbf{4.45e-1}                        \\
                                                                                       & \textbf{10}  & {\color[HTML]{3166FF} \textbf{1.49e-3}} & \textbf{5.51e-1}                         & \textbf{1.17e+1}                        &  &                                                                                        & \textbf{10}  & {\color[HTML]{3166FF} \textbf{3.81e-3}}   & \textbf{1.20e-2}    & \textbf{2.95e-1}                        \\
                                                                                       & \textbf{30}  & \textbf{1.79e-4}                        & {\color[HTML]{3166FF} \textbf{1.15e-5}}  & \textbf{1.05e-1}                        &  &                                                                                        & \textbf{30}  & {\color[HTML]{3166FF} \textbf{1.17e-3}}   & \textbf{3.22e-3}    & \textbf{1.24e-1}                        \\
\multirow{-5}{*}{\textbf{\begin{tabular}[c]{@{}c@{}}SCHWEFEL\\ (1D)\end{tabular}}}     & \textbf{100} & \textbf{2.37e-6}                        & {\color[HTML]{3166FF} \textbf{4.20e-13}} & \textbf{7.98e-9}                        &  & \multirow{-5}{*}{\textbf{\begin{tabular}[c]{@{}c@{}}KEANE\\ (2D)\end{tabular}}}        & \textbf{100} & {\color[HTML]{3166FF} \textbf{3.37e-4}}   & \textbf{7.67e-4}    & \textbf{1.23e-2}                        \\ \hline
                                                                                       & \textbf{1}   & {\color[HTML]{3166FF} \textbf{3.30e-1}} & \textbf{1.45e+1}                         & \textbf{1.09e+0}                        &  &                                                                                        & \textbf{1}   & \textbf{1.33e+2}                          & \textbf{1.14e+2}    & {\color[HTML]{3166FF} \textbf{4.96e+1}} \\
                                                                                       & \textbf{3}   & {\color[HTML]{3166FF} \textbf{2.98e-2}} & \textbf{8.93e+0}                         & \textbf{3.41e-1}                        &  &                                                                                        & \textbf{3}   & \textbf{5.35e+1}                          & \textbf{5.21e+1}    & {\color[HTML]{3166FF} \textbf{5.05e+1}} \\
                                                                                       & \textbf{10}  & {\color[HTML]{3166FF} \textbf{2.67e-2}} & \textbf{3.62e+0}                         & \textbf{9.45e-2}                        &  &                                                                                        & \textbf{10}  & {\color[HTML]{3166FF} \textbf{2.05e+1}}   & \textbf{2.10e+1}    & \textbf{2.17e+1}                        \\
                                                                                       & \textbf{30}  & {\color[HTML]{3166FF} \textbf{2.67e-2}} & \textbf{2.14e+0}                         & \textbf{4.48e-2}                        &  &                                                                                        & \textbf{30}  & \textbf{6.86e+0}                          & \textbf{5.78e+0}    & {\color[HTML]{3166FF} \textbf{5.62e+0}} \\
\multirow{-5}{*}{\textbf{\begin{tabular}[c]{@{}c@{}}ACKLEY\\ (2D)\end{tabular}}}       & \textbf{100} & \textbf{2.67e-2}                        & \textbf{3.22e-1}                         & {\color[HTML]{3166FF} \textbf{2.38e-2}} &  & \multirow{-5}{*}{\textbf{\begin{tabular}[c]{@{}c@{}}RANA\\ (2D)\end{tabular}}}         & \textbf{100} & \textbf{1.21e+0}                          & \textbf{7.28e-1}    & {\color[HTML]{3166FF} \textbf{3.08e-1}} \\ \hline
                                                                                       & \textbf{1}   & {\color[HTML]{3166FF} \textbf{8.19e+0}} & \textbf{5.48e+1}                         & \textbf{1.71e+1}                        &  &                                                                                        & \textbf{1}   & {\color[HTML]{3166FF} \textbf{4.25e+0}}   & \textbf{6.01e+0}    & \textbf{4.66e+0}                        \\
                                                                                       & \textbf{3}   & {\color[HTML]{3166FF} \textbf{2.29e+0}} & \textbf{2.65e+1}                         & \textbf{5.90e+0}                        &  &                                                                                        & \textbf{3}   & {\color[HTML]{3166FF} \textbf{1.12e+0}}   & \textbf{5.41e+0}    & \textbf{2.78e+0}                        \\
                                                                                       & \textbf{10}  & {\color[HTML]{3166FF} \textbf{5.92e-1}} & \textbf{1.40e+1}                         & \textbf{2.81e+0}                        &  &                                                                                        & \textbf{10}  & {\color[HTML]{3166FF} \textbf{4.73e-2}}   & \textbf{3.54e+0}    & \textbf{6.48e-1}                        \\
                                                                                       & \textbf{30}  & {\color[HTML]{3166FF} \textbf{3.67e-1}} & \textbf{7.95e+0}                         & \textbf{1.53e+0}                        &  &                                                                                        & \textbf{30}  & {\color[HTML]{3166FF} \textbf{2.48e-5}}   & \textbf{1.41e+0}    & \textbf{6.17e-2}                        \\
\multirow{-5}{*}{\textbf{\begin{tabular}[c]{@{}c@{}}BUKIN06\\ (2D)\end{tabular}}}      & \textbf{100} & {\color[HTML]{3166FF} \textbf{3.54e-1}} & \textbf{4.10e+0}                         & \textbf{8.26e-1}                        &  & \multirow{-5}{*}{\textbf{\begin{tabular}[c]{@{}c@{}}WF\\ (2D)\end{tabular}}}           & \textbf{100} & {\color[HTML]{3166FF} \textbf{3.28e-13}}  & \textbf{2.57e-1}    & \textbf{1.86e-2}                        \\ \hline
                                                                                       & \textbf{1}   & {\color[HTML]{3166FF} \textbf{1.73e+1}} & \textbf{1.95e+1}                         & \textbf{2.10e+1}                        &  &                                                                                        & \textbf{1}   & {\color[HTML]{3166FF} \textbf{2.11e-1}}   & \textbf{9.34e-1}    & \textbf{9.63e-1}                        \\
                                                                                       & \textbf{3}   & {\color[HTML]{3166FF} \textbf{8.45e+0}} & \textbf{1.29e+1}                         & \textbf{1.69e+1}                        &  &                                                                                        & \textbf{3}   & {\color[HTML]{3166FF} \textbf{3.63e-2}}   & \textbf{7.06e-1}    & \textbf{8.92e-1}                        \\
                                                                                       & \textbf{10}  & {\color[HTML]{3166FF} \textbf{1.03e+0}} & \textbf{3.72e+0}                         & \textbf{9.81e+0}                        &  &                                                                                        & \textbf{10}  & {\color[HTML]{3166FF} \textbf{1.47e-3}}   & \textbf{3.38e-1}    & \textbf{6.84e-1}                        \\
                                                                                       & \textbf{30}  & {\color[HTML]{3166FF} \textbf{2.00e-2}} & \textbf{3.24e-1}                         & \textbf{2.74e+0}                        &  &                                                                                        & \textbf{30}  & {\color[HTML]{3166FF} \textbf{1.63e-6}}   & \textbf{1.02e-1}    & \textbf{3.23e-1}                        \\
\multirow{-5}{*}{\textbf{\begin{tabular}[c]{@{}c@{}}CARROMTABLE\\ (2D)\end{tabular}}}  & \textbf{100} & {\color[HTML]{3166FF} \textbf{1.41e-2}} & \textbf{1.77e-2}                         & \textbf{3.65e-2}                        &  & \multirow{-5}{*}{\textbf{\begin{tabular}[c]{@{}c@{}}XINSHEYANG04\\ (2D)\end{tabular}}} & \textbf{100} & {\color[HTML]{3166FF} \textbf{1.27e-16}}  & \textbf{2.67e-2}    & \textbf{3.14e-2}                        \\ \hline
                                                                                       & \textbf{1}   & {\color[HTML]{3166FF} \textbf{5.88e-1}} & \textbf{1.24e+0}                         & \textbf{1.10e+0}                        &  &                                                                                        & \textbf{1}   & {\color[HTML]{3166FF} \textbf{1.06e-2}}   & \textbf{6.90e-1}    & \textbf{8.04e-1}                        \\
                                                                                       & \textbf{3}   & {\color[HTML]{3166FF} \textbf{4.78e-1}} & \textbf{1.06e+0}                         & \textbf{9.25e-1}                        &  &                                                                                        & \textbf{3}   & {\color[HTML]{3166FF} \textbf{8.93e-6}}   & \textbf{5.02e-1}    & \textbf{6.93e-1}                        \\
                                                                                       & \textbf{10}  & {\color[HTML]{3166FF} \textbf{4.09e-1}} & \textbf{9.43e-1}                         & \textbf{7.26e-1}                        &  &                                                                                        & \textbf{10}  & {\color[HTML]{3166FF} \textbf{1.81e-16}}  & \textbf{2.99e-1}    & \textbf{5.22e-1}                        \\
                                                                                       & \textbf{30}  & {\color[HTML]{3166FF} \textbf{3.54e-1}} & \textbf{8.24e-1}                         & \textbf{6.65e-1}                        &  &                                                                                        & \textbf{30}  & {\color[HTML]{3166FF} \textbf{2.64e-43}}  & \textbf{1.69e-1}    & \textbf{3.57e-1}                        \\
\multirow{-5}{*}{\textbf{\begin{tabular}[c]{@{}c@{}}CROWNEDCROSS\\ (2D)\end{tabular}}} & \textbf{100} & {\color[HTML]{3166FF} \textbf{3.15e-1}} & \textbf{7.25e-1}                         & \textbf{6.57e-1}                        &  & \multirow{-5}{*}{\textbf{\begin{tabular}[c]{@{}c@{}}DROPWAVE\\ (3D)\end{tabular}}}     & \textbf{100} & {\color[HTML]{3166FF} \textbf{4.75e-140}} & \textbf{8.94e-2}    & \textbf{1.71e-1}                        \\ \hline
                                                                                       & \textbf{1}   & \textbf{2.00e+0}                        & \textbf{1.99e+0}                         & {\color[HTML]{3166FF} \textbf{1.97e+0}} &  &                                                                                        & \textbf{1}   & {\color[HTML]{3166FF} \textbf{9.65e-1}}   & \textbf{2.08e+1}    & \textbf{1.81e+1}                        \\
                                                                                       & \textbf{3}   & \textbf{1.95e+0}                        & \textbf{1.97e+0}                         & {\color[HTML]{3166FF} \textbf{1.88e+0}} &  &                                                                                        & \textbf{3}   & {\color[HTML]{3166FF} \textbf{2.92e-2}}   & \textbf{1.85e+1}    & \textbf{1.47e+1}                        \\
                                                                                       & \textbf{10}  & \textbf{1.73e+0}                        & \textbf{1.88e+0}                         & {\color[HTML]{3166FF} \textbf{1.60e+0}} &  &                                                                                        & \textbf{10}  & {\color[HTML]{3166FF} \textbf{1.71e-2}}   & \textbf{1.60e+1}    & \textbf{1.09e+1}                        \\
                                                                                       & \textbf{30}  & \textbf{1.26e+0}                        & \textbf{1.62e+0}                         & {\color[HTML]{3166FF} \textbf{9.96e-1}} &  &                                                                                        & \textbf{30}  & {\color[HTML]{3166FF} \textbf{1.35e-2}}   & \textbf{1.36e+1}    & \textbf{8.09e+0}                        \\
\multirow{-5}{*}{\textbf{\begin{tabular}[c]{@{}c@{}}DAMAVANDI\\ (2D)\end{tabular}}}    & \textbf{100} & \textbf{4.65e-1}                        & \textbf{9.04e-1}                         & {\color[HTML]{3166FF} \textbf{1.88e-1}} &  & \multirow{-5}{*}{\textbf{\begin{tabular}[c]{@{}c@{}}LAYEB04\\ (3D)\end{tabular}}}      & \textbf{100} & {\color[HTML]{3166FF} \textbf{1.31e-2}}   & \textbf{1.11e+1}    & \textbf{5.16e+0}                        \\ \hline
\end{tabular}%
}
\caption{Best-of-$k$ optimality gap summary for QHD, LFMSGD, and Subgrad on 12 test functions (cf. \fig{best-of-k}), including one 1-dimensional, nine 2-dimensional, and two 3-dimensional cases. The smallest gaps among three algorithms are marked blue.}
\label{tab:best-of-k-table}
\end{table}



As shown in \fig{best-of-k}, QHD consistently outperforms Subgrad and LFMSGD across 8 out of 12 test functions, achieving significantly smaller optimality gaps for all tested values of \( k \). Notably, for \texttt{WF}, \texttt{XINSHEYANG04} and \texttt{DROPWAVE}, QHD demonstrates an overwhelming advantage, reducing the gap by multiple orders of magnitude compared to the classical methods for large $k$. For \texttt{KEANE}, \texttt{CROWNEDCROSS}, \texttt{BUKIN06}, \texttt{CARROMTABLE} and \texttt{LAYEBO4}, QHD also achieves superior performance, maintaining a decisive lead across increasing values of \( k \). These results highlight the effectiveness of QHD in finding high-quality solutions where classical algorithms struggle to close the optimality gap, emphasizing its robustness across a diverse set of non-convex test functions.
The numerical data corresponding to \fig{best-of-k} is presented in \tab{best-of-k-table}.

For \texttt{SCHWEFEL} and \texttt{ACKLEY}, QHD initially maintains an advantage over the classical methods but fails to sustain it as \( k \) increases. Specifically, for \texttt{SCHWEFEL}, QHD outperforms both Subgrad and LFMSGD when \( k \leq 10 \) but loses its lead for \( k \geq 30 \). This can be attributed to the structure of \texttt{SCHWEFEL}, where the global minimum has a large basin of attraction, making it easier for any local search algorithm to succeed with randomized initialization and enough repetitions. In contrast, for \texttt{ACKLEY}, QHD outperforms the classical methods up to \( k = 30 \) but stagnates at \( k = 100 \). This is primarily due to the spatial discretization in our QHD simulation, which limits its ability to ``see'' solutions closer to the global minimum; see \secapp{A-qhd-bn} for a detailed discussion.

For the remaining two test functions, \texttt{RANA} and \texttt{DAMAVANDI}, QHD yields an optimality gap comparable with classical methods, and no significant advantage is observed for QHD. This suggests that these functions may not exhibit structures where QHD can succesfully utilize.
Nonetheless, QHD’s competitive performance on them, combined with its superiority on the majority of test functions, reinforces its effectiveness as a powerful optimization algorithm.

\subsection{QHD dynamics: a comparative case study}

\begin{figure}[htbp]
    \centering
    \includegraphics[trim=0pt 35pt 0pt 60pt, clip, width=0.8\linewidth]{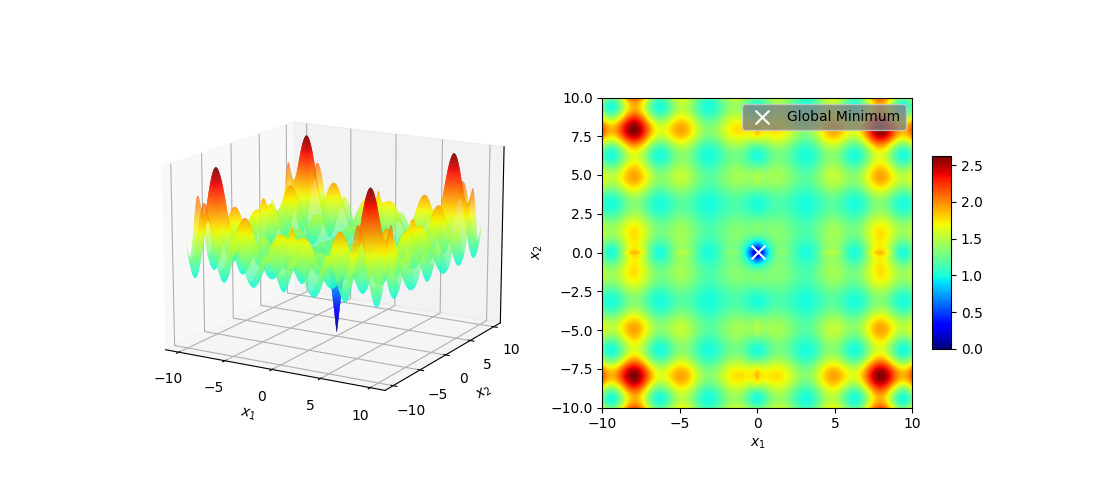}
    \caption{Landscape of test function \texttt{XINSHEYANG04}}
    \label{fig:xsy-landscape}
\end{figure}

\begin{figure}[ht!]
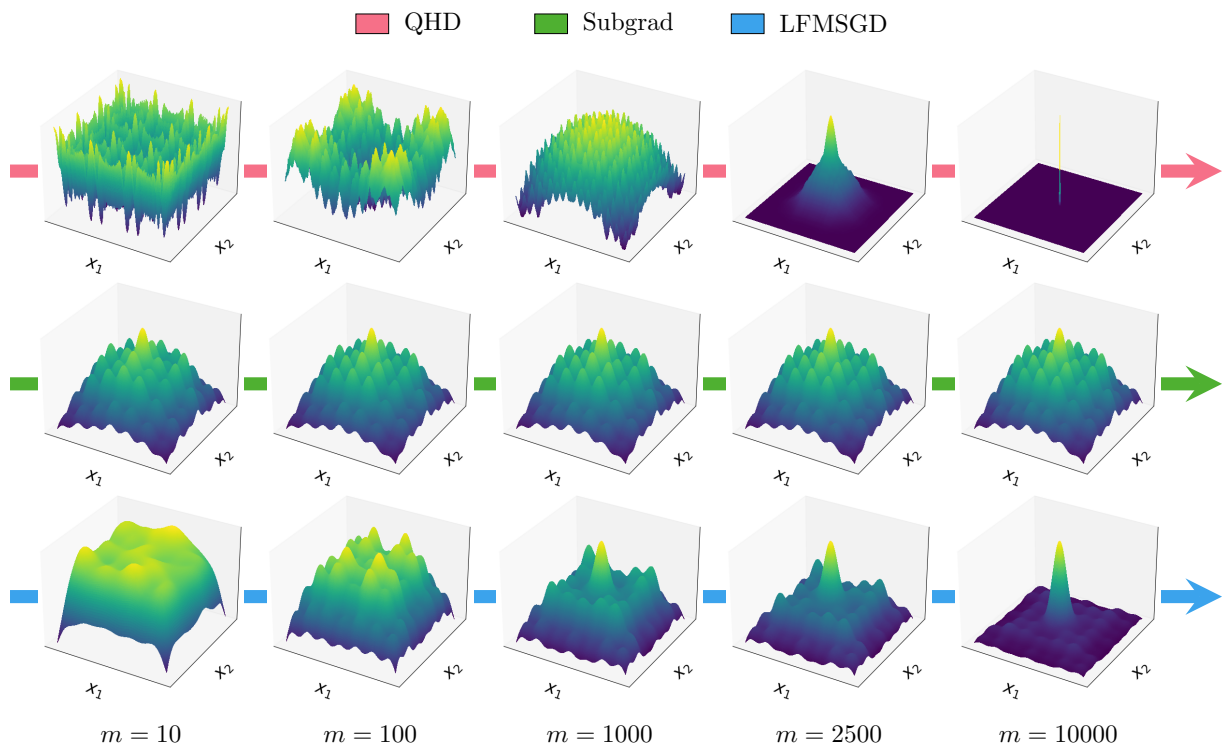

    \centering
    \resizebox{\textwidth}{!}{
    \begin{tikzpicture}
        \definecolor{qhdcolor}{RGB}{246, 112, 136}   
        \definecolor{subgradcolor}{RGB}{79, 176, 49} 
        \definecolor{lfmsgdcolor}{RGB}{59, 163, 236}  

        \draw[qhdcolor, line width=2mm, -stealth] (-2, 0.5) -- (16.5, 0.5);

        \draw[subgradcolor, line width=2mm, -stealth] (-2, -2.75) -- (16.5, -2.75);

        \draw[lfmsgdcolor, line width=2mm, -stealth] (-2, -6) -- (16.5, -6);
        
        \foreach \i/\m in {0/10, 1/100, 2/1000, 3/2500, 4/10000} {
            \node at (\i * 3.5, 0.5) {
                \begin{subfigure}[t]{0.19\textwidth}
                    \includegraphics[trim=70pt 50pt 55pt 75pt, clip, width=\linewidth]{figures/xinsheyang04_dist/QHD_\m.png}
                \end{subfigure}
            };
        }

        \foreach \i/\m in {0/10, 1/100, 2/1000, 3/2500, 4/10000} {
            \node at (\i * 3.5, -2.75) {
                \begin{subfigure}[t]{0.19\textwidth}
                    \includegraphics[trim=70pt 50pt 55pt 75pt, clip, width=\linewidth]{figures/xinsheyang04_dist/SUBGRAD_\m.png}
                \end{subfigure}
            };
        }

        \foreach \i/\m in {0/10, 1/100, 2/1000, 3/2500, 4/10000} {
            \node at (\i * 3.5, -6) {
                \begin{subfigure}[t]{0.19\textwidth}
                    \includegraphics[trim=70pt 50pt 55pt 75pt, clip, width=\linewidth]{figures/xinsheyang04_dist/LFMSGD_\m.png}
                \end{subfigure}
            };
        }

        \node at (0, -8.1) {\textbf{$m=10$}};
        \node at (3.5, -8.1) {\textbf{$m=100$}};
        \node at (7, -8.1) {\textbf{$m=1000$}};
        \node at (10.5, -8.1) {\textbf{$m=2500$}};
        \node at (14, -8.1) {\textbf{$m=10000$}};

        \node[anchor=center] at (7.4, 2.75) { 
            \begin{tikzpicture}
                \draw[fill=qhdcolor] (0.27, 0) rectangle (0.77, 0.3);
                \node[anchor=west] at (0.87, 0.15) {QHD};
        
                \draw[fill=subgradcolor] (3, 0) rectangle (3.5, 0.3);
                \node[anchor=west] at (3.6, 0.15) {Subgrad};
        
                \draw[fill=lfmsgdcolor] (6, 0) rectangle (6.5, 0.3);
                \node[anchor=west] at (6.6, 0.15) {LFMSGD};
            \end{tikzpicture}
        };
    \end{tikzpicture}
    }

    \caption{Distributions at intermediate iterations for QHD (1st row), Subgrad (2nd row), and LFMSGD (3rd row) on \texttt{XINSHEYANG04} where $m$ denotes the iteration number. Note that in QHD, $m = t / h = 1000 t$ due to time discretization (see \sec{disc-qhd}), where $t$ denotes the evolution time.}
    \label{fig:i-dist}
\end{figure}

To illustrate how QHD outperforms Subgrad and LFMSGD in most cases, we need to show the intermediate iterations for these three algorithms. We use test function \texttt{XINSHEYANG04} as an example. The landscape of the 2-dimensional multimodal function \texttt{XINSHEYANG04} has its global minimum located at the center as shown in \fig{xsy-landscape}. We also illustrate the intermediate iteration process of three algorithms on test function \texttt{XINSHEYANG04} in \fig{i-dist}. We summarize our observations below.

QHD demonstrates a strong ability to explore the landscape and escape local minima. Initially, the distribution appears highly fluctuating, reflecting a \emph{kinetic phase}, during which the algorithm averages the initial wave function over the whole search space to reduce the risk of poor initialization~\cite{leng2023qhd}. As the evolution time $t$ (or iteration number $m$) increases, QHD dynamically adapts, gradually concentrating on more promising regions while retaining some level of exploration (\emph{global search phase}). By the end of the evolution, the distribution becomes more localized (\emph{descent phase}), indicating convergence toward the global minimum while avoiding premature trapping in suboptimal regions. Eventually, the distribution narrows into a sharp peak at the global minimum.

LFMSGD also exhibits the ability to escape local minima, albeit being less efficient. In the early iterations, the distributions are relatively broad due to the random initialization. However, unlike QHD, the refinement process is less efficient, as evidenced by the gradual and somewhat diffusive concentration of probability mass over subsequent iterations. As the evolution progresses, the distribution becomes more peaked, though some residual spread persists. Eventually, LFMSGD converges, but the slower contraction of the distribution suggests a less directed search compared to QHD, highlighting its inefficiency despite its stochastic capability to escape local minima.

Subgrad, in contrast, exhibits a starkly different behavior. Once the algorithm enters a basin of attraction, determined by the random initialization, it remains trapped in the corresponding local minimum encountered early in the process, and this entrapment persists across all iterations. This aligns with known limitations of subgradient methods, which lack intrinsic mechanisms for escaping local optima, making them ineffective at navigating non-convex landscapes.

In summary, QHD demonstrates the most effective escape dynamics and convergence efficiency, leveraging its quantum-inspired evolution to navigate complex landscapes. LFMSGD benefits from stochasticity to avoid local traps but at the cost of slower refinement. Subgradient descent, being fully deterministic, remains consistently trapped, reinforcing its limitations in highly non-convex settings.

\section{Conclusion}
In this work, we study how quantum computers can be leveraged to address non-smooth optimization problems. Specifically, we investigate the theoretical and empirical properties of the Quantum Hamiltonian Descent (QHD) algorithm. We propose three variants of the original continuous-time QHD algorithm and prove their global convergence. Inspired by the product formula in quantum simulation, we also introduce discrete-time QHD, a fully digitized implementation that achieves convergence similar to its continuous-time counterpart without requiring precise simulation of the original dynamics. Through various numerical experiments, we demonstrate that QHD outperforms classical subgradient algorithms in both convergence rate and solution quality. 

Several questions related to QHD still remain open. First, the convergence rate of discrete-time QHD is primarily established via numerical methods, and a rigorous proof appears both challenging and beyond the scope of this work. 
Second, the characterization of the runtime of (continuous-time) QHD for non-smooth non-convex problems relies on an abstract \textit{spectral gap}, and it would be appealing to relate this quantity with more specific structures of the objective function, such as weak convexity or Hessian bounds.
Third, due to the limited computational power of classical computers, we can currently only simulate QHD on up to two-dimensional optimization problems. With improved numerical simulation techniques, we anticipate that the numerical study of QHD could achieve better scalability and shed light on high-dimensional problems in practice.

\bibliographystyle{plainnat}
\bibliography{ref}

\appendix

\section{Subgradient methods}\label{sec:subgrad-review}
A function $f\colon \R^d \to \R$ is \textit{convex} if for all $0 \le t \le 1$ and any $\xx_1, \xx_2 \in \R^d$, we have
\begin{equation}
    f(t\xx_1 + (1-t)\xx_2) \le tf(\xx_2) + (1-t)f(\xx_2).
\end{equation}
The gradient of a non-smooth function $f$ is not well-defined everywhere. Instead, we consider a weaker notion of subgradient.
Let $f\colon \R^d \to \R$. We say a vector $\gg\in \R^d$ is a \textit{subgradient} of $f$ at $\xx$ if,
\begin{equation}
    f(\yy) \ge f(\xx) + \gg\trans (\yy-\xx), \quad \forall \yy \in \R^d.
\end{equation}
The subgradient of a function $f$ at a fixed point is not unique. The \textit{subdifferential} of $f$ at $\xx \in \R^d$ is defined as the set of all subgradients of $f$ at $\xx$, denoted by
\begin{equation}
    \partial f(\xx) \coloneqq \{\gg\in\R^d\colon f(\yy) \ge f(\xx) + \gg\trans (\yy-\xx), \quad \forall \yy \in \R^d\}.
\end{equation}
We say $f$ is \textit{subdifferentiable} at $\xx$ if $\partial f(\xx) \neq \varnothing$.
When $f$ is convex and finite over $\R^d$, the subdifferential $\partial f(\xx)$ is a convex, bounded, and closed set for every $\xx$. The directional derivative $f'(\xx, \vv)$ at any $\xx$ along the direction $\vv$ can be represented by
\begin{equation}
    f'(\xx,\vv) = \lim_{t\to 0^+} \frac{f(\xx + t \vv) - f(\xx)}{t} = \max_{\gg \in \partial f(\xx)} \gg\trans \vv.
\end{equation}
In particular, when $f$ is differentiable at $\xx$, $\partial f(\xx) = \{\nabla f(\xx)\}$ and we recover the following formula from standard calculus,
\begin{equation}
    f'(\xx,\vv) = \nabla f(\xx)\trans \vv.
\end{equation}
A point $\xx^*$ is a local minimizer of a function $f$ (not necessarily convex) if and only if $f$ is subdifferentiable at $\xx^*$ and $0 \in \partial f(\xx^*)$.

We say a function $f\colon \R^d \to \R$ is $L$-\textit{Lipschitz} if for any $\xx, \yy \in \R^d$, 
\begin{equation}
    |f(\xx) - f(\yy)| \le L \|\xx - \yy\|.
\end{equation}
If $f$ is differentiable, the gradient of $f$ must be bounded by the Lipschitz constant, i.e., $\|\nabla f\|_\infty \le L$. 
A function $f$ is \textit{locally Lipschitz} if for any $\xx_0 \in \R^d$, there exists a local Lipschitz constant $\ell_{x_0} > 0$ and a radius $\delta > 0$ such that
\begin{equation}
    |f(\xx) - f(\xx_0)| \le \ell_{x_0} \|\xx - \xx_0\|,\quad \forall \|\xx - \xx_0\| \le \delta.
\end{equation}
It is well-known that a locally Lipschitz function is differentiable almost everywhere.

Clarke's gradient ~\cite{clarke1990optimization} is a generalization of the notion of subdifferential that enables the analysis of
non-convex and non-smooth functions through convex analysis.
Let $f\colon \R^d \to \R$ be a locally Lipschitz function and $\Omega_f$ be the set such that $f$ is not differentiable. By the local Lipschitz property, the set $\Omega_f$ is of measure $0$. For a point $\xx \in \R^d$, we define the \textit{generalized gradient} (or \textit{Clarke's gradient}) of the function $f$ as follows:
\begin{equation}
    \partial_C f(\xx) \coloneqq \conv\left(\{\gg\colon \gg \in \R^d, \nabla f(\xx_k) \to \gg, \xx_k \to \xx, \xx_k \notin \Omega_f\}\right),
\end{equation}
where $\conv(A)$ is the \textit{convex hull} of a set $A$. 
A point $\xx^* \in \R^d$ is called a \textit{Clarke stationary point} if $0 \in \partial_C f(\xx^*)$. Note that Clarke stationarity is a necessary condition for a local optimum (including both minimum and maximum), but the inverse is not true.

\vspace{4mm}
The \textit{subgradient algorithm} generalizes the gradient descent algorithm by replacing the negative gradient update at each step with a negative subgradient update:
\begin{equation}
    \xx_{k+1} = \xx_k - s_k g_k, \ \forall k = 0, 1, \cdots
\end{equation}
where $\xx_0$ is an initial guess, $g_k \in \partial f(\xx_k)$ is \textit{any} subgradient of $f$ at $\xx_k$, and $s_k > 0$ is a step size.
In contrast to gradient descent, the subgradient algorithm is in general not a descent method (i.e., \{$f(\xx_k)\}_k$ is not monotonically decreasing). Therefore, it is natural to track the current best solution $\xx^{(k)}_{\mathrm{best}}$ and best function values $f^{(k)}_{\mathrm{best}}$:
\begin{equation}
    \xx^{(k)}_{\mathrm{best}} \coloneqq \argmin_{1 \le k \le K} f(\xx_k),\quad f^{(k)}_{\mathrm{best}} \coloneqq \min_{1 \le k \le K} f(\xx_k).
\end{equation}
With a constant step size $s > 0$, it has been proven that the subgradient algorithm has the following asymptotic convergence result for a convex function $f$ with a Lipschitz constant $L$: 
\begin{equation}
    f^{(k)}_{\mathrm{best}} - f(\xx^*) \le \frac{\|\xx_0 - \xx^*\|^2}{2sk} + \frac{L^2s}{2},
\end{equation}
where $\xx^*$ is the minimizer of $f$.
In this result, we observe a non-vanishing optimality gap $L^2 s/2 = \Omega(s)$ independent of the iteration number $k$. This behavior of the subgradient algorithm is radically different from gradient descent, as the optimality gap of the latter algorithm converges to $0$ at a rate $\mathcal{O}(k^{-1})$.

In practice, it is more favorable to consider a diminishing sequence of step sizes $\{s_k\}$, which will yield a vanishing optimality gap. In the following theorem, we consider a sequence of step sizes decrease as $k^{-1/2}$, and it turns out that the optimality gap also converges at the same speed.

\begin{theorem}[{\cite{bubeck2015convex}}] \label{thm:subg-cvg}
    Suppose that $f$ is a convex function such that $\norm{g} \leq L$ for any $\xx$ and $g \in \partial f(\xx)$.
    Let $\xx^\ast$ be the minimizer of $f$, $R \coloneqq \|\xx_0 - \xx^\ast\|$, and choose a step size $s_k = \frac{R}{L \sqrt{k}}$. Then, we have
    \begin{equation}
        f^{(k)}_{\mathrm{best}} - f(\xx^*) \leq
        RL \cdot \frac{\ln{k}}{\sqrt{k}}.
    \end{equation}
\end{theorem}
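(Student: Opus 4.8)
The plan is to run the classical telescoping argument for the subgradient method, specialized to the diminishing schedule $s_j = R/(L\sqrt{j})$, and then reduce the claimed rate to two elementary sum estimates. First I would write the one-step recursion for the squared distance to the optimum. From the update $\xx_{j+1} = \xx_j - s_j g_j$ with $g_j \in \partial f(\xx_j)$, expanding the square gives
\begin{equation}
    \|\xx_{j+1} - \xx^*\|^2 = \|\xx_j - \xx^*\|^2 - 2 s_j g_j\trans(\xx_j - \xx^*) + s_j^2 \|g_j\|^2 .
\end{equation}
The subgradient inequality $f(\xx^*) \ge f(\xx_j) + g_j\trans(\xx^* - \xx_j)$ yields $g_j\trans(\xx_j - \xx^*) \ge f(\xx_j) - f(\xx^*)$, and $\|g_j\| \le L$ by hypothesis, so
\begin{equation}
    2 s_j\bigl(f(\xx_j) - f(\xx^*)\bigr) \le \|\xx_j - \xx^*\|^2 - \|\xx_{j+1} - \xx^*\|^2 + L^2 s_j^2 .
\end{equation}

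Next I would sum this over $j = 1,\dots,k$. The distance terms telescope, leaving $\sum_{j=1}^k 2 s_j\bigl(f(\xx_j) - f(\xx^*)\bigr) \le R^2 + L^2 \sum_{j=1}^k s_j^2$, where $R^2$ is the leading numerator term once the initialization is labeled consistently. Since $f^{(k)}_{\mathrm{best}} - f(\xx^*) \le f(\xx_j) - f(\xx^*)$ for every $j \le k$, the left-hand side is at least $2\bigl(f^{(k)}_{\mathrm{best}} - f(\xx^*)\bigr)\sum_{j=1}^k s_j$, giving the weighted-average bound
\begin{equation}
    f^{(k)}_{\mathrm{best}} - f(\xx^*) \le \frac{R^2 + L^2 \sum_{j=1}^k s_j^2}{2 \sum_{j=1}^k s_j} .
\end{equation}

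Finally I would plug in $s_j = R/(L\sqrt{j})$ and estimate the two sums: $\sum_{j=1}^k s_j^2 = (R^2/L^2)\sum_{j=1}^k 1/j \le (R^2/L^2)(1 + \ln k)$ by comparison with $\int_1^k \d x/x$, and $\sum_{j=1}^k s_j = (R/L)\sum_{j=1}^k 1/\sqrt{j} \ge (R/L)\sqrt{k}$ since each of the $k$ terms is at least $1/\sqrt{k}$. Substituting yields $f^{(k)}_{\mathrm{best}} - f(\xx^*) \le \frac{R^2(2 + \ln k)}{2(R/L)\sqrt{k}} = \frac{RL(2 + \ln k)}{2\sqrt{k}}$, which is $O\!\left(RL\,\ln k/\sqrt{k}\right)$ and, for $k$ large enough that $\ln k \ge 2$, is bounded by $RL\,\ln k/\sqrt{k}$ exactly as claimed.

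The main obstacle is purely bookkeeping rather than conceptual: one must be careful with the off-by-one in the iteration index — whether $\xx_0$ or $\xx_1$ is the initialization, which determines whether $R^2$ or a slightly larger quantity appears in the telescoped numerator — and one must make the harmonic-sum upper bound $\sum 1/j \le 1+\ln k$ and the lower bound $\sum 1/\sqrt{j} \ge \sqrt{k}$ tight enough that the resulting constant is consistent with the stated $RL\,\ln k/\sqrt{k}$. No smoothness, strong convexity, or compactness of the domain is used; only convexity of $f$ and the uniform subgradient bound $\|g\| \le L$ enter.
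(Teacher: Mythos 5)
Your proposal is correct and is exactly the standard telescoping argument behind the cited result (the paper itself gives no proof, only the reference to Bubeck's monograph, whose Theorem 3.2 proceeds the same way: one-step expansion of $\|\xx_{j+1}-\xx^*\|^2$, the subgradient inequality, telescoping, and the two sum estimates $\sum 1/j \le 1+\ln k$ and $\sum 1/\sqrt{j} \ge \sqrt{k}$). The only caveat — which you correctly flag — is that your explicit bound $RL(2+\ln k)/(2\sqrt{k})$ implies the stated $RL\ln k/\sqrt{k}$ only for $k \ge e^2$; since the stated inequality is vacuously false at $k=1$ (where $\ln k = 0$), this is a defect of the theorem as transcribed rather than of your argument.
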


For strongly convex optimization problems, the convergence rate can be improved to $\mathcal{O}(k^{-1})$ provided that we choose a sequence of step size that decay at the same rate:

\begin{theorem}[{\cite{beck2017first}}] \label{thm:subg-cvg-s}
    Suppose that $f$ is a $\mu$-strongly convex function such that $\norm{g} \leq L$ for any $\xx$ and $g \in \partial f(\xx)$.\footnote{This requires that $f$ is restricted on a bounded region in $\mathbb{R}^d$. The subgradient algorithm referenced here is thus the projected subgradient algorithm~\cite{beck2017first}, a natural extension that enforces constraints by projecting iterates onto the convex feasible region.}
    Let $\xx^\ast$ be the minimizer of $f$ and choose a step size $s_k = \frac{2}{\mu (k+1)}$. 
    Then, we have
    \begin{equation}
        f^{(k)}_{\mathrm{best}} - f(\xx^*) \leq \frac{2L^2}{\mu} \cdot \frac{1}{k+1}.
    \end{equation}
\end{theorem}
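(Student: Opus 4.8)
The plan is to derive the standard one-step distance inequality for the subgradient iteration and then combine it with the prescribed harmonic step size through a weighted telescoping sum. Writing $d_j \coloneqq \|\xx_j - \xx^*\|^2$ and expanding the update $\xx_{j+1} = \xx_j - s_j g_j$, I would first obtain
\begin{equation}
    d_{j+1} = d_j - 2 s_j\, g_j\trans (\xx_j - \xx^*) + s_j^2 \|g_j\|^2 .
\end{equation}
The two structural hypotheses enter here: since $g_j \in \partial f(\xx_j)$ and $f$ is $\mu$-strongly convex, evaluating the strong-convexity inequality at $\yy = \xx^*$ gives $g_j\trans(\xx_j - \xx^*) \ge f(\xx_j) - f(\xx^*) + \tfrac{\mu}{2} d_j$, while the bounded-subgradient assumption controls the last term by $\|g_j\|^2 \le L^2$.

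Substituting both bounds and rearranging to isolate the gap at iterate $j$, I would reach
\begin{equation}
    f(\xx_j) - f(\xx^*) \le \frac{1 - \mu s_j}{2 s_j}\, d_j - \frac{1}{2 s_j}\, d_{j+1} + \frac{s_j L^2}{2}.
\end{equation}
With $s_j = \tfrac{2}{\mu(j+1)}$ the coefficients collapse to $\tfrac{1-\mu s_j}{2 s_j} = \tfrac{\mu(j-1)}{4}$, $\tfrac{1}{2 s_j} = \tfrac{\mu(j+1)}{4}$, and $\tfrac{s_j L^2}{2} = \tfrac{L^2}{\mu(j+1)}$. The crucial move is then to multiply the inequality by the weight $j$: defining $a_j \coloneqq \tfrac{\mu j(j-1)}{4} d_j$ turns the first two terms into the telescoping difference $a_j - a_{j+1}$, so summing over $j = 1, \dots, k$ leaves only $a_1 - a_{k+1} \le 0$ (because $a_1 = 0$ and $a_{k+1} \ge 0$), together with the harmonic residual $\sum_{j=1}^k \tfrac{j}{j+1}\tfrac{L^2}{\mu} \le \tfrac{k L^2}{\mu}$. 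This yields $\sum_{j=1}^k j\bigl(f(\xx_j) - f(\xx^*)\bigr) \le \tfrac{k L^2}{\mu}$.

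To finish, I would observe that $f^{(k)}_{\mathrm{best}}$ does not exceed any single iterate value and hence is bounded by the weighted average of the gaps; dividing the accumulated bound by $\sum_{j=1}^k j = \tfrac{k(k+1)}{2}$ produces exactly
\begin{equation}
    f^{(k)}_{\mathrm{best}} - f(\xx^*) \le \frac{k L^2/\mu}{k(k+1)/2} = \frac{2 L^2}{\mu (k+1)} .
\end{equation}
I expect the only real subtlety to be the calibration of the weights: an unweighted telescoping sum pairs poorly with the $1/(j+1)$ step size and leaks a spurious $\log k$ factor (precisely the slower rate of the merely convex case in \thm{subg-cvg}), whereas the linear weights $j$ are tuned to the step-size decay so that the distance terms cancel cleanly and the leftover harmonic sum stays $O(k)$. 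The remaining points needing care are verifying $a_1 = 0$ (so no initial-distance term survives) and justifying the best-iterate-versus-weighted-average comparison.
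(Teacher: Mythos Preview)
The paper does not supply its own proof of this theorem; it is quoted as a known result from \cite{beck2017first}. Your argument is correct and is precisely the standard weighted-telescoping proof found in that reference: the one-step distance recursion, the strong-convexity lower bound on $g_j\trans(\xx_j-\xx^*)$, the choice of weights $j$ so that $a_j=\tfrac{\mu j(j-1)}{4}d_j$ telescopes, and the final averaging step are all exactly as in the source. One small remark: since the footnote specifies the \emph{projected} subgradient method, the first display should really be the inequality $d_{j+1}\le d_j-2s_j\,g_j\trans(\xx_j-\xx^*)+s_j^2\|g_j\|^2$, which follows from non-expansiveness of the projection onto the feasible set containing $\xx^*$; everything downstream is unaffected.
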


The $k$-dependence in \thm{subg-cvg} and \thm{subg-cvg-s} is tight up to a factor of $\ln k$ for first-order methods: As established in~\cite{nemirovski1983problem}, for any $k$, there exist convex functions (or strongly convex functions, respectively) for which any first-order method fails to achieve an optimality gap $o(1/\sqrt{k})$ (or $o(1/k)$, respectively) using $k$ queries.\footnote{We write $f = o(g)$ if and only if $f/g \to 0$.}

\section{Analysis of Schr\"odinger equations}
\subsection{Spectral theory of Schr\"odinger operators} \label{sec:sdg-spec}

The standard theory of \sdg\ operators guarantees they have nice spectral property under minimal assumptions:

\begin{lemma}[{\cite[Theorem XIII.47 and XIII.67]{reed1972methods}}] \label{lem:sdg-spec}
    Let $V\colon \mathbb{R}^d \to \mathbb{R}$ be continuous\footnote{The continuity assumption can be weakened to $V \in L^2_{\rm loc}(\mathbb{R}^d)$; See~\cite{reed1972methods}.} and assume $\lim_{\xx \to \infty}V(\xx) = +\infty$.
    Then, $H = -\Delta + V$ has purely discrete spectrum with a nondegenerate strictly positive ground state.
    As a consequence, $H$ has a positive \emph{spectral gap} which is defined as the difference between the smallest and the second smallest (counting multiplicities) eigenvalues of $H$.
\end{lemma}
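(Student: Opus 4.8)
The plan is to deduce the statement from two classical pillars of the spectral theory of \sdg\ operators: \emph{(i)} compactness of the resolvent of $H=-\Delta+V$ when $V$ is confining, which forces the spectrum to be purely discrete; and \emph{(ii)} a Perron--Frobenius argument for the positivity-improving semigroup $e^{-tH}$, which gives a nondegenerate, strictly positive ground state. The positive spectral gap is then immediate from discreteness plus nondegeneracy of the lowest eigenvalue.

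First I would reduce to the case $V\ge 1$. Since $V$ is continuous and $V(\xx)\to+\infty$, for every $M$ there is an $R$ with $V\ge M$ outside $B_R$, while $V$ is bounded below on the compact ball $\overline{B_R}$; hence $V$ is bounded below on $\mathbb{R}^d$, and after adding a constant we may assume $V\ge 1$, so $H\ge 1>0$. With $V\in L^2_{\mathrm{loc}}(\mathbb{R}^d)$ bounded below, the Faris--Lavine/Kato criterion shows $H$ is essentially self-adjoint on $C_c^\infty(\mathbb{R}^d)$, with form domain $Q(H)=\{u\in L^2:\ \|\nabla u\|^2+\int V|u|^2<\infty\}$.

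The heart of the argument is to show that the embedding $Q(H)\hookrightarrow L^2(\mathbb{R}^d)$ is compact, which makes $(H+1)^{-1}$ — a bounded map $L^2\to Q(H)$ post-composed with this embedding — a compact operator. Take $\{u_n\}$ bounded in $Q(H)$, say $\|\nabla u_n\|^2+\int V|u_n|^2\le C$. On each ball $B_R$ the sequence is bounded in $H^1(B_R)$, so Rellich--Kondrachov together with a diagonal extraction over $R=1,2,\dots$ produces a subsequence converging in $L^2_{\mathrm{loc}}$; meanwhile the tails are controlled uniformly in $n$ by $\int_{|\xx|>R}|u_n|^2\le \big(\inf_{|\xx|>R}V\big)^{-1}\int V|u_n|^2\le C/\inf_{|\xx|>R}V\to 0$ as $R\to\infty$. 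Combining the two gives a subsequence that is Cauchy in $L^2(\mathbb{R}^d)$, establishing compactness. Since $(H+1)^{-1}$ is then compact, self-adjoint and positive, the spectral theorem for compact operators yields eigenvalues $E_1\le E_2\le\cdots\to+\infty$ of finite multiplicity and no continuous spectrum; that is, $H$ has purely discrete spectrum.

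For the ground state, I would use the Feynman--Kac (or Trotter product) representation to see that $e^{-tH}$ is positivity improving: $e^{t\Delta}$ has a strictly positive integral kernel and $e^{-tV}$ is multiplication by a strictly positive function. A bounded, self-adjoint, positivity-improving semigroup with compact resolvent has, by the Perron--Frobenius theorem in this setting~\cite{reed1972methods}, a simple lowest eigenvalue $E_1$ admitting a strictly positive eigenfunction $\phi_0$. Simplicity of $E_1$ means $E_1<E_2$ strictly, so the spectral gap $E_2-E_1$ is positive, as claimed. The main obstacle is the compactness step for the form-domain embedding: this is precisely where the hypothesis $V\to+\infty$ is genuinely used, and it requires care in handling the form domain and in making the tail estimate uniform in $n$; the self-adjointness and Perron--Frobenius inputs are standard once compactness of the resolvent is in hand.
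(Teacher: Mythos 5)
Your argument is correct, and it is essentially the proof of the two results the paper simply cites without proof: the compact-resolvent/Rellich--Kondrachov-plus-tail-estimate step is exactly Reed--Simon's Theorem XIII.47 (discrete spectrum for confining potentials), and the Trotter/Feynman--Kac positivity-improving semigroup plus Perron--Frobenius step is their Theorem XIII.67 (nondegenerate strictly positive ground state), with the gap then following from simplicity of $E_1$ and discreteness. Nothing is missing; this matches the standard route the citation points to.
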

\begin{remark} \label{rem:sdg-spec}
    \lem{sdg-spec} can be extended to the case where $V$ is defined on $\Omega \coloneqq [-1,1]^d$: in such a case we no longer need any assumption for $\lim_{\xx \to +\infty} V(\xx)$. To understand this, it is important to note that the spectrum will be unchanged if we extend $V$ to $\widehat{V}$, where
    \begin{equation}
        \widehat{V}(\xx) = \left\{
            \begin{array}{ll}
                V(\xx), & x \in \Omega; \\
                +\infty, & x \notin \Omega.
            \end{array}
        \right.
    \end{equation}
    Now that $\widehat{V}$ is defined on $\mathbb{R}^d$, the only obstacle for applying \lem{sdg-spec} is that $\widehat{V}$ is not continuous.
    This can be tackled by a standard method that constructs a continuous function that is sufficiently close to $\widehat{V}$.
\end{remark}

\subsection{Regularity theory of Schr\"odinger equations}\label{append:regularity}
In this section, we show that QHD preserves the smoothness of the quantum state in the evolution. Specifically, we consider the following time-dependent Schr\"odinger equation,
\begin{align}\label{eqn:schrodinger-basic}
    i \partial_t \Psi(t,\xx) = \left(-\frac{1}{2\lambda(t)}\Delta + \lambda(t) f(\xx)\right)\Psi(t,\xx),
\end{align}
where $\xx \in \Omega = [0,1)^d$ with periodic boundary condition, $f(\xx) \in L^\infty(\Omega)$, and $\lambda(t)\colon [0, T]\to \R^+$ is a continuous function.

\begin{remark}
    While we consider unconstrained optimization problems in this paper, the minimum of the objective function is often distributed in a compact subset in $\R^d$, which means it is only necessary for an optimization algorithm to search within a finite bounding box. 
    In QHD, this means we can restrict the quantum evolution in a box since the quantum wave function is essentially vanishing outside of this bounding box. Therefore, it is reasonable to model QHD by a Schr\"odinger equation with periodic boundary conditions (up to proper rescaling of the space).
\end{remark}

\begin{definition}
    For any $s >0$, we define the $H^s$-norm of a function $u \in L^2(\Omega)$ by
    \begin{align}
        \|u\|_{H^s} = \left(\sum_{\kk \in \Z^d} (1 + 4\pi^2|\kk|^2)^s |\hat{u}(\kk)|^2\d \kk\right)^{1/2},
    \end{align}
    where $\hat{u}(\kk)$ is the Fourier transform of the function $u(\xx)$,
    \begin{align}
        \hat{u}(\kk) = \int_\Omega u(\xx) e^{-2i\pi \xx \cdot \kk}~\d \xx.
    \end{align}
\end{definition}

\begin{lemma}[Sobolev estimates]\label{lem:sobolev-estimates}
    Let $\Psi(t,\xx) \colon [0,T] \to L^2(\Omega)$ be the solution to~\eqn{schrodinger-basic}. Then, for any $s > 0$ and $t \in [0, T]$, there is a constant $C_{T} > 0$ such that
    \begin{align}
        \|\Psi(t)\|_{H^s} \le C_T \|\Psi_0\|_{H^s}.
    \end{align}
\end{lemma}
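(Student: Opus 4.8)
The plan is to run an energy estimate directly at the level of the $H^s$-norm. Introduce the Bessel operator $\Lambda \coloneqq (I-\Delta)^{1/2}$, which is diagonal in the Fourier basis on $\Omega$ with symbol $(1+4\pi^2|\kk|^2)^{1/2}$, so that $\|u\|_{H^s} = \|\Lambda^s u\|_{L^2}$. Applying $\Lambda^s$ to~\eqn{schrodinger-basic} and using that $\Lambda^s$ commutes with $\Delta$, one obtains
\begin{equation}
    i\partial_t(\Lambda^s\Psi) = -\frac{1}{2\lambda(t)}\Delta(\Lambda^s\Psi) + \lambda(t)\,f\,(\Lambda^s\Psi) + \lambda(t)\,[\Lambda^s, M_f]\Psi,
\end{equation}
where $M_f$ denotes multiplication by $f$. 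Differentiating $\|\Lambda^s\Psi(t)\|^2$ in time, the contributions of $-\tfrac{1}{2\lambda}\Delta(\Lambda^s\Psi)$ and of $\lambda f(\Lambda^s\Psi)$ do not contribute to the real part, because $\Delta$ is self-adjoint and $f$ is real-valued; only the commutator term survives, giving
\begin{equation}
    \frac{\d}{\d t}\|\Lambda^s\Psi(t)\|^2 = 2\lambda(t)\,\mathrm{Im}\langle [\Lambda^s, M_f]\Psi(t),\ \Lambda^s\Psi(t)\rangle \le 2\lambda(t)\,\|[\Lambda^s, M_f]\Psi(t)\|\,\|\Lambda^s\Psi(t)\|.
\end{equation}

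The crux is then a commutator estimate of Kato--Ponce type, namely $\|[\Lambda^s, M_f]g\|_{L^2} \le C_{s,f}\,\|g\|_{H^{s-1}}$, where $C_{s,f}$ depends only on suitable Lipschitz/Sobolev norms of $f$ (for $s \le 1$ it suffices that $f$ is Lipschitz, which is exactly what is available under the hypotheses of~\lem{lyapunov-estimate-sc} and~\lem{e-dot-c}). Plugging this in with $g = \Psi(t)$ and using $\|\Psi\|_{H^{s-1}} \le \|\Psi\|_{H^s}$ turns the previous display into $\tfrac{\d}{\d t}\|\Lambda^s\Psi\|^2 \le 2\lambda(t)C_{s,f}\|\Lambda^s\Psi\|^2$, and Gr\"onwall's inequality together with the boundedness of $\lambda$ on the compact interval $[0,T]$ yields $\|\Psi(t)\|_{H^s} \le \exp\!\big(C_{s,f}\int_0^T\lambda(\tau)\,\d\tau\big)\|\Psi_0\|_{H^s} \eqqcolon C_T\|\Psi_0\|_{H^s}$.

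A complementary route, which avoids differentiating $f$ altogether when $s=2$, is to run the energy estimate on $\|H(t)\Psi(t)\|$ with $H(t)=-\tfrac{1}{2\lambda}\Delta+\lambda f$: self-adjointness of $H(t)$ makes the leading ``propagation'' term drop out of $\tfrac{\d}{\d t}\|H\Psi\|^2$, while $\dot H(t) = -\tfrac{\dot\lambda}{\lambda}H(t) + 2\dot\lambda f$ is controlled relative to $H(t)$ using only $S \coloneqq \sup_\Omega|f|$; combined with the elliptic identity $\Delta\Psi = 2\lambda^2 f\Psi - 2i\lambda H\Psi$ (hence $\|\Psi\|_{H^2} \simeq \|H(t)\Psi\| + \|\Psi\|$) and $L^2$-conservation, this produces the $s=2$ bound with a constant depending only on $T$, $\lambda$ and $S$. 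Interpolating between $s=0$ (trivial, since the $L^2$-norm is conserved) and $s=2$ then covers all $0<s\le 2$, and iterating the argument on $\|H(t)^m\Psi\|$ reaches $s=2m$ — at the price of re-introducing the commutators $[H^j,M_f]$.

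\emph{Main obstacle.} The delicate point throughout is exactly this commutator between a high-order operator ($\Lambda^s$ or $H^m$) and multiplication by the rough potential $f$: boundedness of $f$ alone does not control it for large $s$, so one must invoke the regularity of $f$ actually present in the application (Lipschitz, or more), which is why the constant $C_T$ legitimately depends on $s$ and on $\|f\|_{W^{k,\infty}}$ for $k$ growing with $s$. The remaining technical care is to justify all manipulations rigorously — checking that $\Psi(t)$ lies in the relevant domains, which I would handle by a density argument, first proving the estimate for smooth potentials $f$ and passing to the limit — and this is the technical heart of~\append{regularity}.
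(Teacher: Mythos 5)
Your proposal follows the same overall strategy as the paper: an energy estimate on $\|\Psi(t)\|_{H^s}^2$ (equivalently $\|\Lambda^s\Psi\|_{L^2}^2$), cancellation of the self-adjoint kinetic contribution, and Gr\"onwall on $[0,T]$. Where you diverge is in the treatment of the potential term, and your version is the more careful one. The paper simply writes
\begin{equation*}
  \tfrac{\d}{\d t}\|\Psi\|_{H^s}^2 \;=\; 2\lambda(t)\,\Re\big\langle -if\Psi,(1-\Delta)^s\Psi\big\rangle \;\le\; 2\lambda(t)\|f\|_\infty\|\Psi\|_{H^s}^2,
\end{equation*}
attributing the last step to H\"older's inequality. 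But that real part is exactly the commutator pairing $\pm\tfrac{1}{2}\,i\,\langle \Psi,[(1-\Delta)^s,M_f]\Psi\rangle$ (it vanishes identically when $f$ is constant), and Cauchy--Schwarz applied naively only yields $\|f\|_\infty\|\Psi\|_{L^2}\|\Psi\|_{H^{2s}}$, which does not close the Gr\"onwall loop; controlling it by $\|\Psi\|_{H^s}^2$ genuinely requires a commutator estimate and hence regularity of $f$ beyond $L^\infty$. Your Kato--Ponce/Calder\'on step supplies precisely this, at the price of a constant depending on $\|f\|_{W^{k,\infty}}$ with $k$ growing in $s$ --- a hypothesis stronger than the lemma's stated assumption $f\in L^\infty(\Omega)$, but one that (as your ``main obstacle'' remark correctly observes) is actually needed: an $L^\infty$ potential cannot propagate arbitrarily high Sobolev regularity, as a single Duhamel iteration already shows, so the lemma as stated is not provable for large $s$ without it. Your alternative route through $\|H(t)^m\Psi\|$ plus interpolation is also legitimate for $s\le 2$ and meets the same commutators beyond that, as you note. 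In short: your argument is essentially the paper's argument done right, and in repairing the key inequality it exposes that the paper's own ``H\"older'' step is a gap; the only things left to pin down are the precise commutator lemma you cite and the domain/density justifications you already flag.
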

\begin{proof}
    Direct calculation shows that 
    \begin{align}\label{eqn:sobolev-1}
        \frac{\d}{\d t}\|\Psi(t)\|^2_{H^s} = 2 \Re{\langle \partial_t \Psi(t), (1-\Delta)^s \Psi(t)\rangle},
    \end{align}
    where the operator $(1-\Delta)^s$ is defined by 
    \begin{align}
        (1-\Delta)^s u \coloneqq \mathcal{F}^{-1}\left[\sum_{\kk \in \Z^d}(1 + 4\pi^2|\kk|^2)^s \mathcal{F}(u)(\kk)\right],
    \end{align}
    where $\mathcal{F}(\cdot)$ and $\mathcal{F}^{-1}(\cdot)$ represents Fourier transform and inverse Fourier transform, respectively.
    We plug~\eqn{schrodinger-basic} into~\eqn{sobolev-1}, and it follows that
    \begin{align}
        \frac{\d}{\d t}\|\Psi(t)\|^2_{H^s} = 2 \Re\left(\left\langle \left(-i\frac{1}{2\lambda(t)} \Delta  - i\lambda(t) f\right)\Psi(t), (1-\Delta)^s \Psi(t)\right\rangle\right)
    \end{align}
    Note that the self-adjoint operators $\Delta$ commutes with $(1-\Delta)^s$ for any $s > 0$, therefore $\langle \Delta \Psi(t), (1-\Delta)^s \Psi(t)\rangle$ is a real number for any $t$, and  
    \begin{align}
        \frac{\d}{\d t}\|\Psi(t)\|^2_{H^s} = 2\lambda(t) \Re\left(\left\langle -if \Psi(t), (1-\Delta)^s \Psi(t)\right\rangle\right) \le 2\lambda(t) \|f\|_{\infty} \|\Psi(t)\|^2_{H^s},
    \end{align}
    where the last step follows from H\"older's inequality. 
    Next, we invoke the Gr\"onwall's inequality and obtain
    \begin{align}
        \|\Psi(t)\|^2_{H^s} \le \exp\left(\int^t_0 \lambda(s) \|f\|_\infty\d s\right) \|\Psi(0)\|^2_{H^s}. 
    \end{align}
    Thus, let $C_T \coloneqq \exp\left(\int^T_0 \lambda(s) \|f\|_\infty\d s\right)$, we complete the proof of the lemma. 
\end{proof}

\begin{lemma}[Sobolev embedding theorem]\label{lem:sobolev-embedding}
    If $s - k > d/2$, then we have $H^s(\R^d) \hookrightarrow C^k(\R^d)$.
\end{lemma}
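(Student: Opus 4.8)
The plan is to prove this by the standard Fourier-analytic argument, bootstrapping from the case $k=0$.

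\emph{Base case $k=0$.} I would first show that $s > d/2$ implies $H^s(\R^d)\hookrightarrow C^0(\R^d)$ (bounded continuous functions) with $\|u\|_{C^0}\le C_{d,s}\|u\|_{H^s}$. Writing $\hat u$ for the Fourier transform of $u$, split $|\hat u(\xi)| = \bigl(|\hat u(\xi)|\,(1+4\pi^2|\xi|^2)^{s/2}\bigr)\cdot(1+4\pi^2|\xi|^2)^{-s/2}$ and apply Cauchy--Schwarz:
\[
\int_{\R^d}|\hat u(\xi)|\,\d\xi \;\le\; \|u\|_{H^s}\left(\int_{\R^d}(1+4\pi^2|\xi|^2)^{-s}\,\d\xi\right)^{1/2}.
\]
The last integral is finite precisely because $2s>d$ (compare with $\int_1^\infty r^{d-1-2s}\,\d r$). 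Hence $\hat u\in L^1(\R^d)$, so by Fourier inversion $u$ agrees almost everywhere with $x\mapsto\int_{\R^d}\hat u(\xi)e^{2\pi i x\cdot\xi}\,\d\xi$, which is continuous by dominated convergence and bounded by $\|\hat u\|_{L^1}$; this yields the claimed estimate.

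\emph{General $k$.} Let $u\in H^s(\R^d)$ with $s-k>d/2$. For a multi-index $\alpha$ with $|\alpha|\le k$, the weak derivative $\partial^\alpha u$ has Fourier transform $(2\pi i\xi)^\alpha\hat u(\xi)$, and since $|(2\pi i\xi)^\alpha|^2\le C(1+4\pi^2|\xi|^2)^{|\alpha|}$ one gets $\partial^\alpha u\in H^{s-|\alpha|}\subseteq H^{s-k}$ with $\|\partial^\alpha u\|_{H^{s-k}}\le C\|u\|_{H^s}$. Since $s-k>d/2$, the base case shows each $\partial^\alpha u$, $|\alpha|\le k$, coincides almost everywhere with a bounded continuous function. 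To conclude $u\in C^k$ I would mollify: $u_\varepsilon := u\ast\rho_\varepsilon$ satisfies $u_\varepsilon\to u$ and $\partial^\alpha u_\varepsilon=(\partial^\alpha u)\ast\rho_\varepsilon\to\partial^\alpha u$ locally uniformly for all $|\alpha|\le k$, so the limit has classical derivatives of order $\le k$ equal to the weak ones; hence a representative of $u$ lies in $C^k(\R^d)$, and summing the bounds over $|\alpha|\le k$ gives $\|u\|_{C^k}\le C_{d,s,k}\|u\|_{H^s}$, i.e.\ the continuous embedding.

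This is a textbook result, so there is no genuine obstacle; the only points needing slight care are (i) upgrading ``bounded'' to ``continuous'' via $\hat u\in L^1$ and dominated convergence, and (ii) deducing classical $C^k$ regularity from continuity of all weak derivatives up to order $k$, handled by the mollification step above. The identical argument applies on the periodic box $\Omega$ used elsewhere in the paper, with the integral $\int_{\R^d}(1+4\pi^2|\xi|^2)^{-s}\,\d\xi$ replaced by the series $\sum_{\kk\in\Z^d}(1+4\pi^2|\kk|^2)^{-s}$, which again converges if and only if $2s>d$.
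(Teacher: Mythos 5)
Your proof is correct. The paper does not actually prove this lemma---it simply cites a standard functional-analysis reference---so your Fourier-analytic argument supplies the detail the paper omits, and it is the standard proof for $L^2$-based Sobolev spaces: Cauchy--Schwarz against the weight $(1+4\pi^2|\xi|^2)^{s}$ to get $\hat u\in L^1$ (using $2s>d$), Fourier inversion plus dominated convergence for continuity and boundedness, the multiplier bound $|(2\pi i\xi)^\alpha|^2\le C(1+4\pi^2|\xi|^2)^{|\alpha|}$ to push the estimate to derivatives of order $\le k$, and mollification to identify the weak derivatives with classical ones. You also correctly note the point that actually matters for this paper: the lemma is invoked (in the proof of the regularity theorem) on the periodic box $\Omega$, where the integral becomes the series $\sum_{\kk\in\Z^d}(1+4\pi^2|\kk|^2)^{-s}$ with the same convergence condition, consistent with the paper's definition of the $H^s$-norm via Fourier series. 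No gaps.
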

\begin{proof}
    This is a standard result in functional analysis, see~\cite{brezis2011functional} for details.
\end{proof}
\begin{remark}
    Note that the $\hookrightarrow$ symbol means the embedding of the Sobolev space in a larger space. Here, embedding not only means the Sobolev space $H^s$ is completely included in $C^k$ as a set but also the $H^s$-norm naturally extends to $C^k$.
\end{remark}

\begin{theorem}\label{thm:regularity}
    Consider the time-dependent Schr\"odinger equation~\eqn{schrodinger-basic} subject to a smooth initial condition $\Psi_0(\xx) \in C^\infty(\Omega)$.
    Then, the PDE~\eqn{schrodinger-basic} admits a unique solution such that $\Psi(t,\xx) \in C^\infty(\R^d)$ for all finite $t \ge 0$.
\end{theorem}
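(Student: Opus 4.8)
The plan is to deduce the spatial smoothness from the a priori Sobolev bounds of \lem{sobolev-estimates} together with the Sobolev embedding \lem{sobolev-embedding}, once existence and uniqueness of the $L^2$-solution has been secured. Since $\Omega = [0,1)^d$ is here the flat torus, the key structural fact I will use is that a function lies in $C^\infty(\Omega)$ iff its Fourier coefficients decay faster than any polynomial, i.e. $C^\infty(\Omega) = \bigcap_{s>0} H^s(\Omega)$; dually, \lem{sobolev-embedding} gives $\bigcap_{s>0} H^s(\Omega) \subseteq \bigcap_{k\ge 0} C^k(\Omega) = C^\infty(\Omega)$. Hence it suffices to show that every $H^s$-norm of $\Psi(t)$ stays finite on $[0,T]$.

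First I would establish $L^2$ well-posedness. For each fixed $t$, the operator $-\tfrac{1}{2\lambda(t)}\Delta$ (with periodic boundary conditions) is self-adjoint on the $t$-independent domain $H^2(\Omega)$, while $\lambda(t) f$ is a bounded self-adjoint multiplication operator because $f \in L^\infty(\Omega)$ and $\lambda$ is continuous on $[0,T]$. Thus $H(t)$ is self-adjoint on the fixed domain $H^2(\Omega)$ and depends continuously on $t$, and by standard results on time-dependent \sdg\ evolutions with a time-independent domain there is a unique family of unitary propagators $U(t,s)$ solving \eqn{schrodinger-basic}; the unique solution with data $\Psi_0$ is $\Psi(t) = U(t,0)\Psi_0$, and in particular $\|\Psi(t)\| = \|\Psi_0\|$, which also furnishes uniqueness.

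Next I would upgrade the regularity. Since $\Psi_0 \in C^\infty(\Omega)$ we have $\Psi_0 \in H^s(\Omega)$ for every $s > 0$, so \lem{sobolev-estimates} applies for each such $s$ and yields $\|\Psi(t)\|_{H^s} \le C_T\|\Psi_0\|_{H^s} < \infty$ for all $t \in [0,T]$; hence $\Psi(t) \in \bigcap_{s>0} H^s(\Omega)$ for every $t \in [0,T]$. Fixing $t$, for an arbitrary $k$ choose $s > k + d/2$ and apply \lem{sobolev-embedding} to get $\Psi(t,\cdot) \in C^k(\Omega)$; since $k$ is arbitrary, $\Psi(t,\cdot) \in C^\infty(\Omega)$. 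As $T$ was arbitrary, this holds for all finite $t \ge 0$, which is the assertion.

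I expect the main technical subtlety to be the rigorous justification that \lem{sobolev-estimates} may be applied to the genuine solution — equivalently, that the formal differentiation of $\|\Psi(t)\|_{H^s}^2$ used in its proof is licit, which presupposes a priori that $\Psi(t) \in H^s$. The clean way to close this loop is a Galerkin (Fourier-truncation) argument: let $P_M$ be the projection onto modes with $|\kk| \le M$, solve the finite-dimensional ODE $i\partial_t \Psi^M = P_M H(t) P_M \Psi^M$ with $\Psi^M(0) = P_M \Psi_0$, for which every manipulation is elementary and smooth in $t$, derive the bound of \lem{sobolev-estimates} uniformly in $M$, and pass to the limit using $L^2$-convergence $\Psi^M \to \Psi$ from the $L^2$ theory together with the uniform $H^s$ bound (weak compactness in $H^s$). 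Everything else is either standard self-adjoint evolution theory or an immediate consequence of the two cited lemmas.
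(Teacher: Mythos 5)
Your proof is correct and follows essentially the same route as the paper: the paper's own argument is precisely ``$\Psi_0\in C^\infty\subset\bigcap_{s>0}H^s$, propagate each $H^s$-norm via \lem{sobolev-estimates}, then conclude $C^\infty$ via \lem{sobolev-embedding}.'' The only difference is that you additionally supply the $L^2$ well-posedness (unitary propagator) step and the Galerkin justification for applying the a priori estimate to the genuine solution\textemdash details the paper's one-paragraph proof takes for granted\textemdash so your write-up is, if anything, more complete.
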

\begin{proof}
    Note that the initial state $\Psi_0(\xx) \in C^\infty(\Omega) \hookrightarrow H^s(\Omega)$ for any $s > 0$. Therefore, any finite time $t \ge 0$, by~\lem{sobolev-estimates}, we have $\Psi(t) \in H^s(\Omega)$, i.e., the Schr\"odinger equation preserves the $H^s$ space. Then, by~\lem{sobolev-embedding}, we have
    \begin{align}
        \Psi(t) \in \cap_{s > d/2} H^s(\omega) \subset \cap_{k > 0} C^k(\omega) \subset C^\infty(\Omega),
    \end{align}
    which concludes the proof.
\end{proof}

\subsection{Quantum simulation of Schr\"odinger equations with non-smooth potentials}\label{append:q-sim-nonsmooth-f}

\begin{lemma}\label{lem:truncation}
    Let $\Psi(s,x)$ denote the exact solution of \eqn{schrodinger-standard-form} and $\widetilde{\Psi}(s,x)$ denote the approximated solution by the Fourier spectral method (truncated up to frequency $n$).\footnote{More details on the Fourier spectral method can be found in Section 2.2 (in particular, Lemma 1) in \cite{childs2022quantum}.}
    We assume that the initial data $\Psi_0(x)$ is periodic and analytic in $x\in \Omega$. Then, for any integer $n \ge 1$, the error from the Fourier spectral method satisfies
    \begin{align}
        \max_{x, t} |\Psi(x,t) - \widetilde{\Psi}(x,t)| \le 2r^{n/2+1},
    \end{align}
    where $0 < r < \min(\frac{1}{2},\frac{1}{Ae^{\xi}})$, $A$ is an absolute constant that only depends on $\Psi_0(x)$, and $\xi$ is the same as in~\eqn{xi-and-S}.
\end{lemma}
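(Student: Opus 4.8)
The plan is to follow the structure of the Fourier pseudospectral error analysis in \cite[Section~2.2, Lemma~1]{childs2022quantum}, feeding it the ``new Sobolev estimate'' of \append{regularity}. The first step is to make the analyticity of the initial data quantitative: a periodic analytic function on $\Omega$ has exponentially decaying Fourier coefficients, so there are constants $A_0>0$ and $\rho>1$, depending only on $\Psi_0$, with $|\hat\Psi_0(\kk)|\le A_0\rho^{-|\kk|}$ for all $\kk\in\Z^d$; equivalently, $\|\Psi_0\|_{H^\sigma}\le A_0\,g(\sigma)$ for every $\sigma>0$, where $g(\sigma)$ is the mildly growing quantity obtained by a standard max-term estimate on $\sum_{\kk}(1+4\pi^2|\kk|^2)^\sigma\rho^{-2|\kk|}$ (roughly $g(\sigma)\sim(C\sigma/\ln\rho)^\sigma$).

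The second step is to propagate this decay to the exact solution $\Psi(s,\cdot)$ for every $s\in[0,s_f]$. Applying the argument of \lem{sobolev-estimates} to the standard form \eqn{schrodinger-standard-form} gives
\begin{align*}
    \|\Psi(s)\|_{H^\sigma}\le \exp\!\Big(\int_0^{s}\varphi(\tau)\,\d\tau\,\sup_\Omega|f|\Big)\|\Psi_0\|_{H^\sigma}\le e^{\xi S}\,\|\Psi_0\|_{H^\sigma}
\end{align*}
for every $\sigma>0$, with a constant $e^{\xi S}$ that is \emph{uniform in $\sigma$} — this uniformity is the crucial point, and it is exactly what lets a sequence of polynomial Sobolev bounds be converted into geometric decay. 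Indeed, for any truncation level $n$ one has
\begin{align*}
    \Big(\sum_{|\kk|>n}|\hat\Psi(s,\kk)|^2\Big)^{1/2}\le (1+4\pi^2 n^2)^{-\sigma/2}\|\Psi(s)\|_{H^\sigma}\le e^{\xi S}A_0\,(4\pi^2n^2)^{-\sigma/2}g(\sigma),
\end{align*}
and choosing $\sigma$ proportional to $n$ makes the right-hand side decay like $r_0^{\,n}$ with a base $r_0$ controlled by $\rho$ (from $\Psi_0$); after absorbing the prefactor $e^{\xi S}A_0$ into a slightly larger base one obtains high-frequency tails of size $O(r^{\,n})$ for any $r$ below a threshold of the form $1/(Ae^{\xi})$ — this is where the $e^{\xi}$ dependence in the statement enters. (Alternatively, and closer to \cite{childs2022quantum}, one can run Gr\"onwall directly on the analytic norm $\|u\|_\tau\coloneqq\sum_{\kk}e^{\tau|\kk|}|\hat u(\kk)|$, letting the radius $\tau(s)$ shrink at rate $\propto\varphi(s)S$ so that the Duhamel term $\int_0^s\varphi(\tau)\|f\Psi(\tau)\|\,\d\tau$ is absorbed; integrating gives a surviving radius at $s_f$ bounded below in terms of $\xi S$.)

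The final step is to substitute this geometric tail bound — together with the identical bound for $\Psi_0$ from Step~1 — into the standard Fourier spectral error decomposition: the error $\Psi-\widetilde\Psi$ is controlled by the tail of the initial data, the tail mass produced by the truncated evolution over $[0,s_f]$, and a telescoping comparison of the exact and truncated propagators, and each contribution is $O(r^{\,n/2+1})$ once the decay above is available (the Sobolev route gives control in $L^2$, which upgrades to the $L^\infty$ bound $\max_{x,t}|\Psi-\widetilde\Psi|$ via the same weighted-$\ell^1$/Sobolev embedding used in \append{regularity}). Summing the three contributions yields $\max_{x,t}|\Psi(x,t)-\widetilde\Psi(x,t)|\le 2r^{\,n/2+1}$ for $0<r<\min(\tfrac12,\tfrac{1}{Ae^{\xi}})$. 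I expect the main obstacle to be Step~2: since $f$ is merely bounded, the algebra estimate $\|f\Psi\|_\tau\le\|f\|_\tau\|\Psi\|_\tau$ that would make the analytic-norm propagation routine is unavailable, so one must either exploit the $\sigma$-uniformity of \lem{sobolev-estimates} as above or track a shrinking analyticity radius by hand; keeping the loss of that radius \emph{linear} in $\xi S$ (so that the surviving decay is still geometric in $n$) is the delicate quantitative point, and it is what pins down the exponent $n/2+1$ rather than something weaker.
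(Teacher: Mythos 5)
Your proposal is correct and takes essentially the same route as the paper: both hinge on the observation that the propagation constant in \lem{sobolev-estimates} is uniform in the derivative order / Sobolev index, so the analyticity of $\Psi_0$ survives the evolution with a radius controlled by $e^{\xi S}$, from which geometric decay of the Fourier tail and the $2r^{n/2+1}$ bound follow. The paper quantifies analyticity via pointwise derivative bounds $|\Psi_0^{(k)}|\le C^{k+1}k!$ and an analytic-continuation strip rather than your $H^\sigma$-optimization, and it bounds only the tail of the exact solution's Fourier series (treating $\widetilde\Psi$ as the literal truncation), so your additional propagator-comparison step is, if anything, more complete than the paper's argument.
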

\begin{proof}
    We give the proof in one dimension, as the same argument is readily generalized to arbitrary finite dimensions. We assume the initial data $\Psi_0(x)$ is periodic over $[0, 2\pi]$. The analyticity implies that, for any $x \in [0, 2\pi]$, there is a constant $C$ such that
    \begin{align}
        \left|\Psi^{(k)}_0(x)\right| \le C^{k+1}(k!).
    \end{align}
    Therefore, the function $\Psi_0(x)$ admits an analytic continuation in the strip 
    $$\Gamma_0 = \left\{z\in \C: |z - x| < \frac{1}{C}, x \in [0, 2\pi]\right\}.$$
    According to the discussion in~\append{regularity}, the Sobolev norm of the wave function $\Psi(s)$ has an exponential growth:
    \begin{align}\label{eqn:sobolev}
        \left\|\Psi^{(k)}(s, \cdot)\right\| \le e^{\xi(s) S} \left\|\Psi^{(k)}_0\right\| \le e^{\xi(s) S} C^{k+1} (k!),
    \end{align}
    where $\xi(s) = \int^s_0 \varphi(r)\d r$. This implies that the wave function $\Psi(s,x)$ is periodic and analytic for any finite $s \in [0, s_f]$. The strip on which $\Psi(s,x)$ admits an analytic continuation is
    \begin{equation}
        \Gamma_s = \left\{z\in \C: |z - x| < \frac{1}{C e^{\xi(s) S}}, x \in [0, 2\pi]\right\}.
    \end{equation}
    
    Suppose that the function $\Psi(s,x)$ allows an exact, infinite trigonometric polynomial representation (see \cite[Lemma 16]{childs2022quantum}),
    \begin{align}
        \Psi(s,x) = \sum^\infty_{k=0}c_k(s) e^{ikx}.
    \end{align}
    Let $\Tilde{\Psi}(s,x)$ be the truncated Fourier series up to $k = n/2$, then the error from the Fourier spectral method satisfies
    \begin{align}
        |\Psi(s,x) - \Tilde{\Psi}(s,x)| \le \sum^\infty_{k=n/2+1}|c_k(s)|.
    \end{align}
    Since the function $\Psi(s,x)$ allows an analytic continuation in the strip $\Gamma_s$, it follows from the Riemann--Lebesgue lemma that for $k \to \infty$, we have
    \begin{equation}
        |c_k| \cdot e^{-k\frac{1}{Ce^{\xi S}}} \to 0.
    \end{equation}
    It turns out that the error from the Fourier spectral method satisfies
    \begin{align}
        |\Psi(x,t) - \Tilde{\Psi}(x,t)| \le \sum^\infty_{k=n/2+1}|c_k(t)| \le A Ce^{\xi S}e^{- \frac{n/2 + 1}{Ce^{\xi S}}}.
    \end{align}
    Note that we use the approximation $e^{-x} \approx 1-x$ for small $x$ in the last step.
\end{proof}

Now, we are ready to prove~\prop{spectral-method}.
\begin{proof}
    By~\lem{truncation}, to ensure that the simulation error is bounded by $\eta$, we may choose the truncation number 
    \begin{equation}
        n \le \mathcal{O}\left(e^{\xi S} \log(\frac{1}{\eta})\right).
    \end{equation}
    Plugging this truncation number in Equation (113) in \cite{childs2022quantum}, we prove the query complexity in \prop{spectral-method}. It is worth noting that the truncation number $n$ scales exponentially $\xi S$ (i.e., the $L^1$-norm of the time-dependent potential $\varphi(s) f$). This implies that the space complexity using quantum computers is polynomial in $\xi S$.
\end{proof}

\section{Experiment details}

\subsection{Test functions} \label{sec:A-func}
Below list 12 functions tested  in \sec{num}.
\begin{enumerate}
    \item \texttt{WF} (WF)~\cite{Polak1992}
    \begin{equation*}
        f=\max\left(\frac{1}{2}\left( x_{1} +\frac{10x_{1}}{x_{1} +0.1} +2x_{2}^{2}\right) ,\ \frac{1}{2}\left( -x_{1} +\frac{10x_{1}}{x_{1} +0.1} +2x_{2}^{2}\right) ,\ \frac{1}{2}\left( x_{1} -\frac{10x_{1}}{x_{1} +0.1} -2x_{2}^{2}\right)\right)
    \end{equation*}
    tested on $[ -10,\ 10]^{2}$ with globally optimal solution at $f( 0,\ 0) =0$.
    \item \texttt{CROWNEDCROSS} (Crowned Cross)~\cite{gavana}
    \begin{equation*}
        f=0.0001\left( |\sin( x_{1})\sin( x_{2})\exp\left( 100-\frac{\sqrt{x_{1}^{2} +x_{2}^{2}}}{\pi }\right) |+1\right)^{0.1}
    \end{equation*}
    tested on $[ -10,\ 15]^{2}$ with globally optimal solution at $f( 0,\ 0) =0.0001$.
    \item \texttt{BUKIN06} (Bukin 6)~\cite{gavana}
    \begin{equation*}
        f=100\sqrt{|x_{2} -0.01x_{1}^{2} |} +0.01|x_{1} +10|
    \end{equation*}
    tested on $[ -15,\ -5] \times [ -3,\ 3]$ with globally optimal solution at $f( -10,\ 1) =0$.
    \item \texttt{KEANE} (Keane)~\cite{baronti2024python}
    \begin{equation*}
         f=-\frac{|\cos^{4}( x_{1}) +\cos^{4}( x_{2}) -2\cos^{2}( x_{1})\cos^{2}( x_{2}) |}{\sqrt{x_{1}^{2} +2x_{2}^{2}}}
    \end{equation*}
    tested on $[10^{-8},\ 10]^2$ with the globally optimal solution at $f(1.60086, 0.468498) = 0.673207$.
    \item \texttt{SCHWEFEL} (Schwefel)~\cite{gavana}
    \begin{equation*}
         f=418.9828872724336-x_{1}\sin\left(\sqrt{|x_{1} |}\right)
    \end{equation*}
    tested on $[ -500,\ 500]$ with globally optimal solution at $f( 420.9687474737558) =0$.
    \item \texttt{ACKLEY} (Ackley)~\cite{gavana}
    \begin{equation*}
         f=-20\exp\left( -0.2\sqrt{\frac{x_{1}^{2} +x_{2}^{2}}{2}}\right) -\exp\left(\frac{\cos( 2\pi x_{1}) +\cos( 2\pi x_{2})}{2}\right) +20+e
    \end{equation*}
    tested on $[ -15,\ 30]^{2}$ with globally optimal solution at $f( 0,\ 0) =0$.
    \item \texttt{XINSHEYANG04} (Xinshe Yang 4)~\cite{gavana}
    \begin{equation*}
         f=\left(\sin^{2}( x_{1}) +\sin^{2}( x_{2}) -\exp\left( -x_{1}^{2} -x_{2}^{2}\right)\right)\exp\left( -\sin^{2}\left(\sqrt{|x_{1} |}\right) -\sin^{2}\left(\sqrt{|x_{2} |}\right)\right)
    \end{equation*}
    tested on $[ -10,\ 10]^{2}$ with globally optimal solution at $f( 0,\ 0) =-1$.
    \item \texttt{CARROMTABLE} (Carrom Table)~\cite{nalaie2015}
    \begin{equation*}
         f=-\frac{1}{30}\exp\left( \left|2-\frac{2}{\pi }\sqrt{x_{1}^{2} +x{_{2}^{2}}} \right|\right)\cos^{2}( x_{1})\cos^{2}( x_{2})
    \end{equation*}
    tested on $[ -10,\ 10]^{2}$ with globally optimal solution at $f=( \pm 9.646157266349,\ \pm 9.646157266349) =-24.1568155165$.
    \item \texttt{RANA} (Rana)~\cite{gavana}
    \begin{align*}
        f = & \, x_{1}\sin\left(\sqrt{|x_{2} -x_{1} +1|}\right)\cos\left(\sqrt{|x_{2} +x_{1} +1|}\right) \\
        & +( x_{2} +1)\sin\left(\sqrt{|x_{2} +x_{1} +1|}\right)\cos\left(\sqrt{|x_{2} -x_{1} +1|}\right)
    \end{align*}
    tested on $[ -500,\ 500]^{2}$ with globally optimal solution at $f( -300.3376328023,\ 500) =-500.802160296664$.
    \item \texttt{DROPWAVE} (Drop Wave)~\cite{nalaie2015}
    \begin{align*}
        f=-\frac{1+\cos\left( 12\sqrt{x_{1}^{2} +x_{2}^{2} +x_{3}^{2}}\right)}{2+0.5\left( x_{1}^{2} +x_{2}^{2} +x_{3}^{2}\right)}
    \end{align*}
    tested on $[ -5.12,\ 5.12]^{3}$ with globally optimal solution at $f( 0,\ 0,\ 0) =-1$.
    \item \texttt{LAYEB04} (Layeb 4)~\cite{layeb2022}
    \begin{align*}
        f=\sum _{i=1}^{2}(\ln( |x_{i} x_{i+1} |+0.001) +\cos( x_{1} +x_{i+1}))
    \end{align*}
    tested on $[ -10,\ 10]^{3}$ with globally optimal solution at $f( 0,\ ( 2j-1) \pi ,\ 0) =2\ln( 0.001) -2$, where $j\in \{-1,\ 0,\ 1,\ 2\}$.
    \item \texttt{DAMAVANDI} (Damavandi)~\cite{gavana}
    \begin{align*}
        f=\left( 1-\left|\frac{\sin( \pi ( x_{1} -2))\sin( \pi ( x_{2} -2))}{\pi ^{2}( x_{1} -2)( x_{2} -2)} \right|^{5}\right)\left( 2+( x_{1} -7)^{2} +2( x_{2} -7)^{2}\right)
    \end{align*}
    tested on $[0, 14]^2$ with the globally optimal solution at $f(2 + 10^{-10}, 2 + 10^{-10}) = 0$.
\end{enumerate}

In the numerical experiment (\sec{num}), we further augment these functions with a continuous barrier that steeply diverges \emph{out} of the original domain. 
This adjustment is made because the discretized QHD algorithm described in \sec{disc-qhd} implicitly alters the topology of the domain $[-L,L]^d$ due to its handling of boundary conditions. 
To be more specific, \algo{discrete-time-qhd} works with a \emph{periodic} boundary condition, effectively allowing the wave function to propagate across boundaries of the domain as if the topology is $(\mathbb{R} / 2L\mathbb{Z})^d$.
In contrast, classical optimization methods such as Subgrad and LFMSGD do not have this feature.
Arguably, this adjustment will not affect the performance of Subgrad and LFMSGD by their design.

\subsection{Effect of changing \texorpdfstring{$L$}{L} in QHD} \label{sec:A-qhd-L}
We analyze the effect of the scaling parameter $L$, introduced in \sec{meth}, on the QHD dynamics.  
Without loss of generality, assume that the domain of a test function \( f(\xx) \) is \( [-1,1]^d \).  
By rescaling the domain via \( \yy = L\xx \), we transform \( f(\xx) \) into \( g(\yy) = f(\yy/L) \) with domain \( [-L,L]^d \), which is then used in QHD.  

The QHD evolution equation in \eqn{qhd-nc} is  
\begin{equation} \label{eqn:qhd-nc-L1}
    i \partial_t \Psi(t) = H(t)\Psi(t),\quad
        H(t) = \frac{1}{\lambda(t)} \left(-\frac{1}{2}\Delta_{\yy} \right) + \lambda(t) g(\yy),
\end{equation}
where the Laplacian operator is given by \( \Delta_{\yy} \coloneqq \sum_{i=1}^{d} \frac{\d^2}{\d y_i^2} \).  
Using the chain rule, the Laplacian transforms as  
\begin{equation}
\Delta_{\yy} = L^{-2} \Delta_{\xx}.
\end{equation}
Substituting this into \eqn{qhd-nc-L1}, we obtain  
\begin{equation} \label{eqn:qhd-nc-L2}
    i L \partial_t \Psi(t) = \left( \frac{1}{L\lambda(t)} \left(-\frac{1}{2}\Delta_{\xx} \right) + L\lambda(t) f(\xx)\right)\Psi(t).
\end{equation}  

Now, introducing the rescaled time variable \( \tau \coloneqq t/L \) and defining \( \Phi(\tau) = \Psi(t) = \Psi(L\tau) \), we rewrite \eqn{qhd-nc-L2} as  
\begin{equation} \label{eqn:qhd-nc-L3}
    i \partial_\tau \Phi(\tau) = \left( \frac{1}{L\lambda(L \tau)} \left(-\frac{1}{2}\Delta_{\xx} \right) + L\lambda(L \tau) f(\xx)\right)\Phi(\tau).
\end{equation}  
Therefore, the equation \eqn{qhd-nc-L3} represents the same QHD dynamics as in \eqn{qhd-nc}, but with the rescaled time variable $\tau=t/L$ and the coefficient function $\mu(\tau) \coloneqq L \lambda(L \tau) $:
\begin{equation} \label{eqn:qhd-nc-L4}
    i \partial_\tau \Phi(\tau) = \left( \frac{1}{\mu(\tau)} \left(-\frac{1}{2}\Delta_{\xx} \right) + \mu(\tau) f(\xx)\right)\Phi(\tau).
\end{equation}

\subsection{Learning-rate free momentum stochastic gradient descent (LFMSGD)} \label{sec:A-classical-algo}
LFMSGD~\cite{hu2024learning} is a subgradient method that generalizes previous learning-rate free subgradient methods like DoG (Distance over Gradients)~\cite{ivgi2023dog} and DoWG (Distance over Weighted Gradients)~\cite{khaled2023dowg}.
It starts with an initial point $\xx_0$ and the update rule is
\begin{subequations}
\begin{align}
    \eta_t &= \frac{\mu_t}{\sqrt{\epsilon_0 + \sum_{i=1}^{t} \| m_i \|^2}}, \\
    m_{t+1} &= \beta m_t + (1 - \beta) g(\xx_t), \\
    \xx_{t+1} &= \xx_t - \eta_t m_{t+1},
\end{align}
\end{subequations}
where $g(\xx_t)$ is an \emph{estimate} of $\partial f(\xx_t)$, $\mu_t = \max_{i \leq t} \{ \norm{\xx_i - \xx_0} \}$, $0 < \epsilon_0 \ll 1$ is a small prefixed constant that enforces the numerical stabilization and $\beta \in [0,1)$ is the momentum parameter.
The returned solution will be $\xx^\ast \coloneqq \xx_T$ where $T$ is the total number of steps.

We choose the parameter $\beta = 0.9$ as recommended in~\cite{hu2024learning}.
The stochasticity in the original LFMSGD arises from the common setting in machine learning where $f = \frac{1}{N} \sum_{1 \leq i \leq N} f_i$ and a natural, computationally efficient approach is to choose $g$ by cycling through the functions $\partial f_1,\partial f_2,\dots,\partial f_N$ in sequence for each $t$.
In our case we do not have a natrual approximation of $\partial f$, so we add Gaussian noise to model $g$, following~\cite{leng2023qhd,shi2020learning}:
\begin{equation}
    g(\xx) = \partial f(\xx) + \mathcal{N}(\boldsymbol{0},\sigma^2).
\end{equation}

\begin{figure}[htbp!]
        \centering
        \includegraphics[height=0.9\textheight]{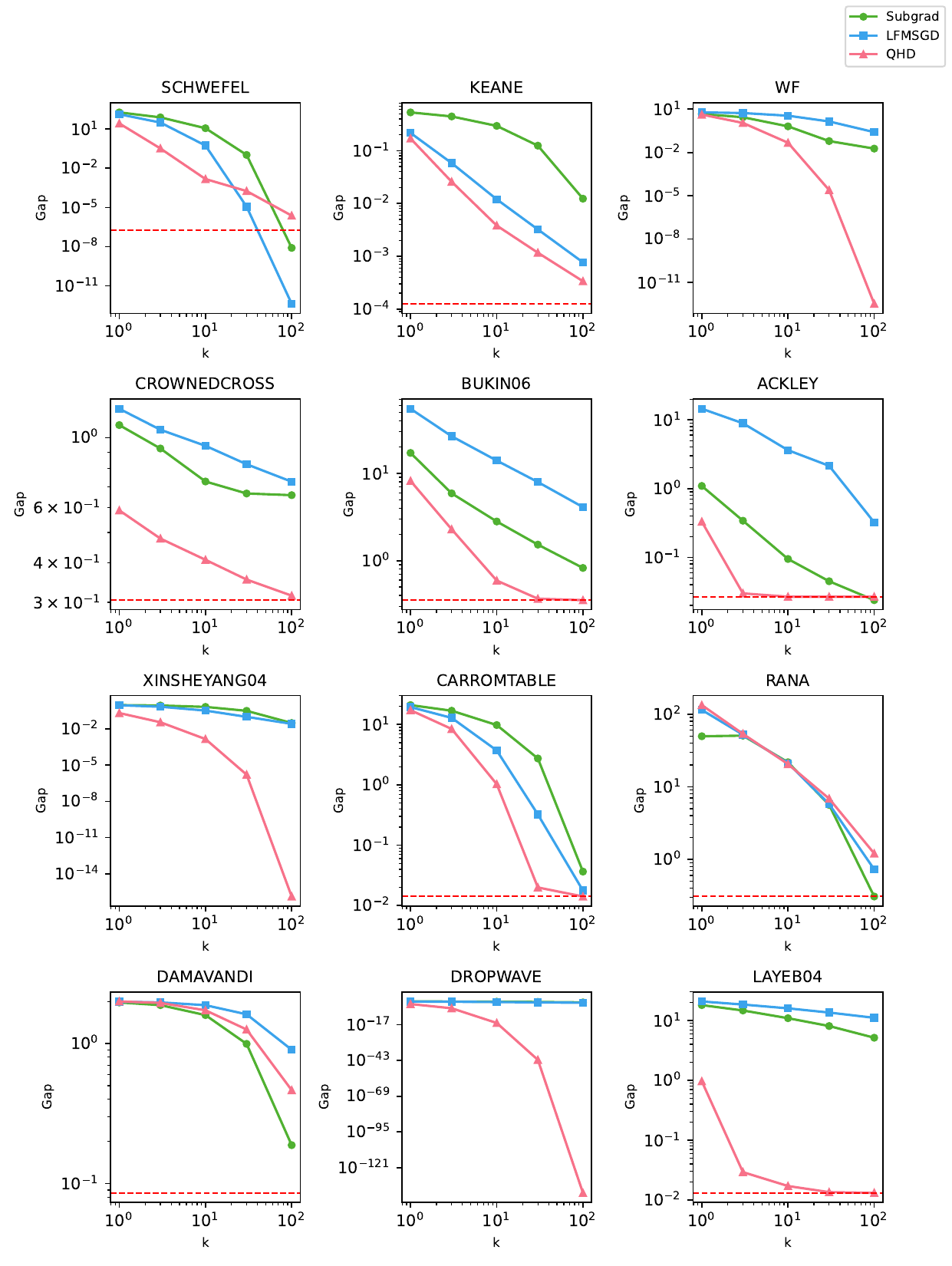} 
        \caption{Best-of-$k$ optimality gap for 12 test functions. Minimal optimality gaps that QHD can ``see'' after discretization are shown as red dotted lines.}
        \label{fig:best-of-k-with-min}
\end{figure}

\subsection{Resolution bottleneck of QHD simulation} \label{sec:A-qhd-bn}

Recall that we choose spatial discretization number $N=512$ for QHD in \sec{num}.
It turns out that for some test functions, the resolution is not enough for QHD to even see a point that has smaller enough optimality gap.
In \fig{best-of-k-with-min} (cf. \fig{best-of-k}), we highlight the minimal optimality gap that QHD can ``see'' after discretization with red dotted lines.
It can be inferred that for functions like \texttt{ACKLEY}, QHD performance no longer improves with $k$ increasing because of this resolution bottleneck.

\end{document}